\newcommand{\mono}{\rightarrowtail} 
\newcommand{\repi}{\twoheadrightarrow} 
\newcommand{\into}{\hookrightarrow} 
\newcommand{\Set}{{\bf Set}}
\newcommand{\CHLoc}{{\bf CHLoc}}
\newcommand{\Loc}{{\bf Loc}}
\newcommand{\DLat}{{\bf DLat}}
\newcommand{\BA}{{\bf BA}}
\newcommand{\Frm}{{\bf Frm}}
 \newcommand{\slat}{{\bf SLat}} 
 \newcommand{\KHaus}{{\bf CHSp}}
 \newcommand{\cat}[1]{\mathcal{#1}} 
 \newcommand{\K}{\cat{K}} 
\newcommand{\F}{\mathscr{S}} 
\DeclareMathOperator{\pts}{\mathbf{pt}} 
\renewcommand{\P}{\mathscr{P}} 
\DeclareMathOperator{\Idl}{\mathrm{Idl}} 
\DeclareMathOperator{\B}{\mathcal{B}} 
\DeclareMathOperator{\Sh}{\mathrm{Sh}} 
\DeclareMathOperator{\Int}{\mathcal{I}} 
\newcommand{\op}{\mathrm{op}}
\let\originalleft\left
\let\originalright\right
\renewcommand{\left}{\mathopen{}\mathclose\bgroup\originalleft}
\renewcommand{\right}{\aftergroup\egroup\originalright}
\DeclareMathOperator{\Subobjects}{\mathrm{Sub}}
\newcommand{\sub}{\Subobjects} 
\newcommand{\Sub}[1]{\Subobjects\left(#1\right)}
 \renewcommand{\phi}{\varphi}
\renewcommand{\epsilon}{\varepsilon}
\newcommand{\zero}{\mathbf{0}} 
\newcommand{\one}{\mathbf{1}} 
\newcommand{\down}{\mathop{\downarrow}}
\newcommand{\sleq}{\sqsubseteq} 
\newcommand{\swedge}{\sqcap} 
\newcommand{\svee}{\sqcup} 
\newcommand{\bigsvee}{\bigsqcup} 
\newcommand{\WEM}{\mathbf{WEM}} 
\theoremstyle{plain}
\newtheorem{theorem}{Theorem}[section]
\newtheorem{corollary}[theorem]{Corollary}
\newtheorem{lemma}[theorem]{Lemma}
\newtheorem{proposition}[theorem]{Proposition}
\theoremstyle{definition}
\newtheorem{definition}[theorem]{Definition}
\newtheorem{remark}[theorem]{Remark}
\newtheorem{example}[theorem]{Example}
\renewcommand{\tocsection}[3]{%
  \indentlabel{\@ifnotempty{#2}{\bfseries\ignorespaces#1 #2\quad}}\bfseries#3}
\renewcommand{\tocsubsection}[3]{%
  \indentlabel{\@ifnotempty{#2}{\ignorespaces#1 #2\quad}}#3}
\newcommand\@dotsep{4.5}
\def\@tocline#1#2#3#4#5#6#7{\relax
  \ifnum #1>\c@tocdepth 
  \else
    \par \addpenalty\@secpenalty\addvspace{#2}%
    \begingroup \hyphenpenalty\@M
    \@ifempty{#4}{%
      \@tempdima\csname r@tocindent\number#1\endcsname\relax
    }{%
      \@tempdima#4\relax
    }%
    \parindent\z@ \leftskip#3\relax \advance\leftskip\@tempdima\relax
    \rightskip\@pnumwidth plus1em \parfillskip-\@pnumwidth
    #5\leavevmode\hskip-\@tempdima{#6}\nobreak
    \leaders\hbox{$\m@th\mkern \@dotsep mu\hbox{.}\mkern \@dotsep mu$}\hfill
    \nobreak
    \hbox to\@pnumwidth{\@tocpagenum{\ifnum#1=1\fi#7}}\par
    \nobreak
    \endgroup
  \fi}
\renewcommand\csname r@tocindent0\endcsname{0pt}
\def\l@subsection{\@tocline{2}{0pt}{2.5pc}{5pc}{}}
\author{C\'elia Borlido}
\address{CMUC, Departamento de Matem\'atica, Universidade de Coimbra, 3001-501 Coimbra, Portugal}
\email{cborlido@mat.uc.pt}
\author{Panagis Karazeris}
\address{Department of Mathematics, University of Patras, 26504 Patras, Greece}
\email{pkarazer@upatras.gr}
\author{Luca Reggio}
\address{Department of Computer Science, University College London, 66--72 Gower Street, London WC1E 6EA, United Kingdom}
\email{l.reggio@ucl.ac.uk}
\author{Konstantinos Tsamis}
\address{Department of Mathematics, University of Patras, 26504 Patras, Greece}
\email{tsamiskonstantinos@gmail.com}
\title{Filtral pretoposes and compact Hausdorff locales}
\thanks{C.~Borlido was partially supported by the Centre for Mathematics of the University of Coimbra - UIDB/00324/2020, funded by the Portuguese Government through FCT/MCTES. L.~Reggio acknowledges support from the EPSRC grant EP/V040944/1. K.~Tsamis benefited from the Operational Programme \emph{Human Resources Development, Education and Lifelong Learning}, co-financed by Greece and the European Union (European Social Fund - ESF), in the context of the project \emph{Strengthening Human Resources Research Potential via Doctorate Research} (MIS-5000432).}
\begin{document}
\maketitle

\begin{abstract}
The category of compact Hausdorff locales is a pretopos which is filtral, meaning that every object is covered by one whose subobject lattice is isomorphic to the lattice of filters of complemented elements. We show that any filtral pretopos satisfying some mild additional conditions can be embedded into the category of compact Hausdorff locales. This result is valid in the internal logic of any topos. Assuming the principle of weak excluded middle and the existence of copowers of the terminal object in the pretopos, the image of the embedding contains all spatial compact Hausdorff locales.

The notion of filtrality was introduced by V.~Marra and L.~Reggio (Theory Appl.\ Categ., 2020) to characterise the category of compact Hausdorff spaces within the class of pretoposes. Our results can be regarded as a constructive extension of the aforementioned characterisation, avoiding reference to points. If the ambient logic is classical, i.e.\ it satisfies excluded middle, and the prime ideal theorem for Boolean algebras holds, we obtain as a corollary the characterisation of compact Hausdorff spaces in \emph{op.\ cit}.
\end{abstract}

\tableofcontents

\vspace{-1.5em}

\section{Introduction}
A purely category theoretic characterisation of a class of mathematical objects, when available, can be regarded as a way to identify the essential properties of such a class, thus paving the way for an axiomatic study of categories sharing similar properties. A prime example is Lawvere's Elementary Theory of the Category of Sets, see~\cite{Lawvere1964,Lawvere2005}, which was later adapted by Schlomiuk to capture the class of topological spaces~\cite{Schlomiuk1970}.
In a similar vein, Marra and Reggio~\cite{mr} recently characterised the category $\KHaus$ of compact Hausdorff spaces as the unique, up to (weak) equivalence, non-trivial well-pointed pretopos that is \emph{filtral} and admits set-indexed copowers of its terminal object (see Theorem~\ref{th:mr-main}). 

Given the important role played by compact Hausdorff \emph{locales} in the development of mathematics internally in a topos,\footnote{Recall that Banaschewski and Mulvey's version of Gelfand duality~\cite{BM2006}, valid internally in any topos, concerns the category of compact \emph{completely regular} locales, and compact Hausdorff locales coincide with compact completely regular ones only if we assume the axiom of dependent choice in the metatheory. The constructive Gelfand duality was extended to compact Hausdorff locales by Henry in~\cite{Henry2016}.} and in particular in connection with topos-theoretic foundations of notions of space in quantum mechanics~\cite{hls}, it is natural to ask whether a similar characterisation is available for the category $\CHLoc$ of compact Hausdorff~locales. 

We hasten to point out that, as is well known, if we work in classical logic and assume the axiom of choice, or even a weaker principle such as the prime ideal theorem for Boolean algebras, then all compact Hausdorff locales are spatial, i.e.\ have enough points. Thus, the categories $\CHLoc$ and $\KHaus$ are equivalent. In contrast, in this paper we work in the setting of \emph{constructive} locale theory: our results, unless otherwise specified, are valid in the logic of any topos (which is intuitionistic) and we do not use non-constructive principles. In this setting, there exist non-spatial compact Hausdorff locales. 
Recall that \emph{Boolean toposes} are those whose internal logic is classical, i.e.\ satisfies the principle of excluded middle stating that, for any proposition~$P$,
\[
P \vee \neg P
\]
holds. 
Equivalently, the object of truth values $\Omega$ (the subobject classifier of the topos) is an internal Boolean algebra. 
The existence of enough points for all compact Hausdorff locales implies that the ambient logic is classical, cf.\ Lemma~\ref{l:enough-points-lem}. Hence, any non-Boolean topos contains a non-spatial compact Hausdorff locale. 

From the standpoint of constructive locale theory, a major shortcoming of the approach in~\cite{mr} is its reliance on the use of points (i.e.\ global elements). On the other hand, it contributes a notion of \emph{filtrality} that turns out to be fundamental also in the point-free setting: a (coherent) category is filtral if it has enough objects whose lattices of subobjects are order-dual to Stone frames; see Section~\ref{ss:filtral-cats} for a precise definition. 

Constructively, one can show that $\CHLoc$ is a filtral pretopos. 
Conversely, our main results identify sufficient conditions on a filtral pretopos $\K$ such that:
\begin{itemize}
\item $\K$ admits an embedding into $\CHLoc$ (Theorem~\ref{t:Sub-pretopos-morphism});
\item if the principle of weak excluded middle $\neg P \vee \neg \neg P$ holds and $\K$ has copowers of its terminal object, the image of this embedding contains all spatial compact Hausdorff locales (Theorem~\ref{th:spatial-locales-covered}).
\end{itemize}
If the ambient logic is classical and compact Hausdorff locales have enough points, we can derive from Theorem~\ref{th:spatial-locales-covered} the characterisation of compact Hausdorff spaces given in~\cite{mr}. 
The problem of obtaining a satisfactory constructive characterisation of $\CHLoc$ remains open; we note that in Corollary~\ref{cor:essent-surj} we obtain a constructive characterisation of $\CHLoc$, but it involves quantifying over the class of Boolean algebras.

\vspace{0.5em}
In the remainder of this introductory section we offer a more detailed overview of the content of the article. In Section~\ref{s:preliminaries} we recall the necessary background concerning locales and pretoposes. 
In Section~\ref{s:sub-fun-into-CHLoc} we recall the concept of a filtral category and show that, for any such category $\K$, there is a natural functor $\F\colon \K\to \CHLoc$ which assigns to an object of $\K$ its lattice of subobjects with the dual order. To show that $\F$ lands in the category of compact Hausdorff locales, we prove that a closed quotient of a compact Hausdorff locale is Hausdorff, an observation which may be of independent interest.

Further properties of $\K$, including a refinement of the notion of filtrality called \emph{compatible filtrality}, which guarantee that the functor $\F$ is faithful and preserves finite limits, are investigated in Section~\ref{sec:lim}.
This is in stark contrast with the case of compact Hausdorff spaces, where the preservation of arbitrary limits follows easily from the assumption that the pretopos is well-pointed, combined with the fact that limits in the category of compact Hausdorff spaces are created by the underlying-set functor; cf.\ \cite[Lemma~4.7]{mr}. 

In Section~\ref{sec:main} we show that, if $\K$ is a filtral pretopos satisfying appropriate assumptions, the functor $\F\colon \K\to\CHLoc$ is a pretopos embedding, i.e.\ a fully faithful pretopos morphism (Theorem~\ref{t:Sub-pretopos-morphism}). While up to this point our methods are valid in the internal logic of a topos, in Section~\ref{s:essential image} we study the essential image of the embedding $\F$ assuming the principle of weak excluded middle, valid in de Morgan toposes. 
Theorem~\ref{th:spatial-locales-covered} states that, if $\K$ additionally has set-indexed copowers of its terminal object $\one$, the essential image of $\F$ contains all spatial compact Hausdorff locales and their closed sublocales. 

Finally, in Section~\ref{s:CH-spaces} we derive Marra and Reggio's characterisation of the category of compact Hausdorff spaces from Theorem~\ref{th:spatial-locales-covered}. To this end, we prove that the compatible filtrality property holds whenever $\one$ is an \emph{atom}, i.e.\ it has exactly two subobjects. 

\addtocontents{toc}{\protect\setcounter{tocdepth}{1}}
\subsection*{Notation} We write $\mono$ and $\repi$ to indicate that an arrow is a monomorphism or a regular epimorphism, respectively. Whenever they exist, the initial and terminal objects of a category are denoted, respectively, by $\zero$ and $\one$. Similarly, the least and greatest elements of a poset are denoted, respectively, by $0$ and $1$.
\addtocontents{toc}{\protect\setcounter{tocdepth}{2}}

\section{Preliminaries}\label{s:preliminaries}

\subsection{Frames and locales}\label{sec:locales}
  We recall some basic (constructively valid) facts about frames and locales that will be needed in the remainder of the article. Classical references include e.g.~\cite{jo,pp,ppt}; for a constructive treatment, see \cite[\S C1]{Elephant2}.
  
 A \emph{frame} is a complete lattice $L$ satisfying the infinite distributive law
 \[
 u\wedge \bigvee_{i\in I}{v_i} = \bigvee_{i\in I}{(u\wedge v_i)}
 \]
 for all subsets $\{u\}\cup\{v_i\mid i\in I\}\subseteq L$. It follows that, for each $u\in L$, the monotone map $u\wedge -\colon L\to L$ preserves all suprema and thus has a right adjoint $u \to -\colon L\to L$. In particular, every element $v\in L$ admits a \emph{pseudocomplement} $\neg v\coloneqq v\to 0$, the largest element of $L$ disjoint from $v$. 
 A \emph{frame homomorphism} is a map $L\to M$ between frames that preserves finite infima and arbitrary suprema (in particular, frame homomorphisms preserve $0$ and $1$). The category of frames and their homomorphisms is denoted by $\Frm$, and is complete and cocomplete; moreover, the underlying-set functor $\Frm\to\Set$ is right adjoint and thus preserves limits. 
 
The category $\Loc$ is defined as the opposite of $\Frm$; the objects of $\Loc$ are referred to as \emph{locales} and its morphisms as \emph{localic maps}. 
If $X$ is a locale, we write $OX$ to denote the same object, this time regarded as a frame; we often refer to $OX$ as the frame corresponding to $X$. Accordingly, the frame homomorphism corresponding to a localic map $f \colon X \to Y$ is denoted by 
\[
f^* \colon OY \to OX.
\] 
Since $f^*$ preserves (finite infima and) all suprema, it has a right adjoint $f_*$:
\[
f^*\dashv f_*\colon OX \to OY.
\] 

As with any adjunction between posets, we have $f^* \cdot f_* \cdot f^* = f^*$ and $f_* \cdot f^* \cdot f_* = f_*$. Thus, the following conditions are equivalent:
\begin{itemize}
\item $f^*$ is surjective;
\item $f_*$ is injective.
\end{itemize}
If any of the previous equivalent conditions is satisfied, we say that $f$ is a localic \emph{injection}. Similarly, $f$ is a localic \emph{surjection} if any of the following equivalent conditions is satisfied:
\begin{itemize}
\item $f^*$ is injective (equivalently, it reflects the order);
\item $f_*$ is surjective.
\end{itemize} 
This terminology suggests that properties of a localic map $f$ are reflected by the right adjoint $f_*$ to the corresponding frame homomorphism. In fact, this viewpoint turns out to be far-reaching and has been adopted systematically e.g.\ in~\cite{pp}. In the following, we shall occasionally exploit this perspective and refer to the right adjoint to a frame homomorphism as a ``localic map''.

\subsubsection{Sublocales}\label{s:sublocales}
Localic injections are precisely the extremal monomorphisms in $\Loc$, and the extremal subobjects in $\Loc$ are known as \emph{sublocales}.\footnote{Recall that a monomorphism $m$ is \emph{extremal} if whenever $m = h\cdot e$ with $e$ an epimorphism, then $e$ is an isomorphism. Extremal subobjects are equivalence classes of extremal monomorphisms with common codomain; cf.\ Section~\ref{sec:reg-coh-pretoposes} for the notion of (plain) subobject.}
Sublocales of a locale $X$ correspond in an order-reversing manner to \emph{nuclei} on $OX$, i.e.\ inflationary, idempotent maps $j \colon OX \to OX$ that preserve finite infima. In more detail, if a sublocale of $X$ is represented by a localic injection $i\colon Y\mono X$, the corresponding nucleus is the composite
\[\begin{tikzcd}
OX \arrow{r}{i^*} & OY \arrow{r}{i_*} & OX.
\end{tikzcd}\]
The set $NX$ of all nuclei on $OX$ is a frame under the pointwise order, hence the set of all sublocales of a locale is a \emph{coframe}---the order-dual notion of frame. 

\emph{Open sublocales} $U \mono X$ are those given by nuclei $\mathfrak{o}_u \coloneqq u \to -$, for some $u\in OX$, and \emph{closed sublocales} $C \mono X$ are those given by nuclei $\mathfrak{c}_v \coloneqq v\vee - $, for some $v\in OX$; every nucleus is a supremum of nuclei of the form $\mathfrak{o}_u \wedge \mathfrak{c}_v$. Any localic map $f \colon X \to Y$ induces a frame homomorphism $f^- \colon NY \to NX$ determined by the fact that it preserves both open and closed sublocales (more precisely, $f^- \mathfrak{o}_u = \mathfrak{o}_{f^* u}$ and $f^- \mathfrak{c}_v = \mathfrak{c}_{f^* v}$), while it must preserve suprema. Its right adjoint $f_+\colon NX \to NY$ is given by $f_+ j= f_* \cdot j \cdot f^*$. 

\subsubsection{Compact Hausdorff locales}
The notion of compactness for locales is a direct abstraction of the one for topological spaces, while the localic notion of Hausdorffness is based on the characterisation of Hausdorff spaces as those whose diagonal is closed in the product topology. Let us recall that a localic map $f\colon X \to Y$ is \emph{closed} if it satisfies the dual Frobenius law
\begin{equation}\label{eq:closed}
\forall u\in OX, \forall v\in OY, \ \ f_* (u \vee f^* v)= f_* u \vee v.
\end{equation}
With this terminology, a locale $X$ is
\begin{enumerate}[leftmargin=5em]
\item[\emph{compact}] if whenever a directed supremum $\bigvee_{i\in
    I} u_i $, with $\{u_i\mid i\in I\} \subseteq OX$, equals the top
  element $1\in OX$, then $u_i=1$ for some $i\in I$.
\item[\emph{Hausdorff}] if its diagonal $X \to X \times X$ is closed.\footnote{This notion is sometimes referred to as \emph{strongly Hausdorff} or \emph{I-Hausdorff} in the literature, cf.\ e.g.~\cite{is,jo} and~\cite{pp}, respectively.}
\end{enumerate}

The full subcategory of $\Loc$ defined by compact Hausdorff locales is denoted by $\CHLoc$. An important fact is that every localic map $f$ between compact Hausdorff locales is \emph{proper}, i.e.\ $f$ is closed and $f_*$ preserves directed suprema. More generally, every localic map from a compact locale to a Hausdorff one is proper, see \cite[Corollary~4.4]{v}.

This yields a useful criterion for a morphism in $\CHLoc$ to be a localic surjection. In fact, as an immediate consequence of eq.~\eqref{eq:closed}, a closed localic map $f$ is surjective whenever it is \emph{dense}, i.e.\ $f_*(0)=0$.
Therefore, 
\begin{lemma}\label{l:dense-implies-surjective}
Any dense localic map between compact Hausdorff locales is surjective.
\end{lemma}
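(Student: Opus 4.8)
The plan is to reduce surjectivity of $f$ to the injectivity of the associated frame homomorphism $f^*$, and then to establish that injectivity by exhibiting $f^*$ as a section of its right adjoint $f_*$. The starting observation is that, since both $X$ and $Y$ are compact Hausdorff locales, the map $f$ is proper by \cite[Corollary~4.4]{v}, and in particular closed; hence the dual Frobenius law of eq.~\eqref{eq:closed} is available to us. This is precisely the point at which compact Hausdorffness enters the argument.

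Concretely, I would instantiate eq.~\eqref{eq:closed} at $u = 0 \in OX$. Since $0 \vee f^* v = f^* v$, this yields
\[
f_* f^* v = f_*(0) \vee v
\]
for every $v \in OY$. Invoking the hypothesis that $f$ is dense, i.e.\ $f_*(0) = 0$, the right-hand side collapses to $v$, so that $f_* \cdot f^* = \mathrm{id}_{OY}$. In particular $f^*$ is a split monomorphism, hence injective.

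Finally, by the criterion recalled in Section~\ref{sec:locales}, a localic map is a surjection precisely when the associated frame homomorphism $f^*$ is injective (equivalently, when $f_*$ is surjective, which also follows since $f_*$ is now split epic). We conclude that $f$ is surjective, as claimed.

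I do not anticipate any genuine obstacle here: once properness—and therefore closedness—is in place, the entire argument is a one-line computation, with density supplying the final simplification. The only subtlety worth flagging is the justification for using eq.~\eqref{eq:closed} in the first place, which rests on the cited fact that every map between compact Hausdorff locales is proper; this is exactly what rules out the need for any additional hypotheses beyond density.
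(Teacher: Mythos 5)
Your proof is correct and follows exactly the paper's argument: properness of maps between compact Hausdorff locales gives closedness, and instantiating the dual Frobenius law at $u=0$ together with density yields $f_* \cdot f^* = \mathrm{id}$, whence $f$ is a localic surjection. No issues.
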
 

\begin{remark}
Although we shall not need this fact, we point out that a proper localic map is surjective precisely when it is dense, hence the converse implication in Lemma~\ref{l:dense-implies-surjective} holds as well.
\end{remark}

Throughout, we shall rely on the following characterisation of monomorphisms and regular epimorphisms in $\CHLoc$; for a proof, see e.g.\ \cite[\S 3.6]{t}.
\begin{lemma}\label{l:CHLoc-monos-repis}
The following statements hold for any morphism $f$ in $\CHLoc$:
\begin{enumerate}[label=\textup{(}\alph*\textup{)}]
\item\label{i:monos-chloc} $f$ is a monomorphism if, and only if, it is a localic injection.
\item\label{i:repis-chloc} $f$ is a regular epimorphism if, and only if, it is a localic surjection.
\end{enumerate}
\end{lemma}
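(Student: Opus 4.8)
The plan is to reduce the two equivalences to the corresponding (coarser) facts in $\Loc$—that localic injections are exactly the extremal monomorphisms, as recalled above, and that localic surjections are exactly the regular epimorphisms—and to transfer these across the inclusion $\CHLoc\into\Loc$ by means of the (surjection, injection) factorisation. The basic observation is that this factorisation can be performed inside $\CHLoc$. Indeed, factor a morphism $f\colon X\to Y$ of $\CHLoc$ in $\Loc$ as a localic surjection $e\colon X\repi Z$ followed by a localic injection $m\colon Z\mono Y$. Then $Z$ is Hausdorff, being a sublocale of the Hausdorff locale $Y$, and it is compact: if a directed supremum $\bigvee_{i\in I} w_i=1$ in $OZ$, then applying the injective frame homomorphism $e^*$ yields the directed supremum $\bigvee_{i\in I} e^* w_i=1$ in $OX$, so $e^* w_i = 1 = e^*(1)$ for some $i$ by compactness of $X$, whence $w_i=1$. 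Thus $Z\in\CHLoc$ and the whole factorisation lies in $\CHLoc$.

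Granting this, the four implications follow. For \ref{i:monos-chloc}, a localic injection is an extremal monomorphism in $\Loc$, hence a monomorphism there, and monomorphisms pass to the full subcategory $\CHLoc$. Conversely, let $f$ be a monomorphism in $\CHLoc$ and write $f=m\cdot e$ as above; since $f=m\cdot e$ is monic, so is $e$. I would then show that the localic surjection $e$ is a regular epimorphism in $\CHLoc$: in $\Loc$ it is the coequaliser of its kernel pair $p_1,p_2\colon X\times_Z X\rightrightarrows X$, and since $Z$ is Hausdorff the diagonal $\Delta_Z\mono Z\times Z$ is closed, so its inverse image $X\times_Z X=(e\times e)^{-1}(\Delta_Z)$ is a closed sublocale of the compact Hausdorff locale $X\times X$, and hence lies in $\CHLoc$; as the coequaliser $Z$ also lies in $\CHLoc$, fullness of the inclusion shows that $e$ is the coequaliser in $\CHLoc$ as well. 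A morphism that is at once a regular epimorphism and a monomorphism is an isomorphism, so $e$ is invertible and $f\cong m$ is a localic injection. The same computation proves the nontrivial half of \ref{i:repis-chloc} (a localic surjection is a regular epimorphism in $\CHLoc$); for the remaining half, a regular epimorphism $f$ is extremal, so in its factorisation $f=m\cdot e$ the monomorphism $m$ must be an isomorphism, and therefore $f$ is a localic surjection.

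The crux is precisely the transfer of the two $\Loc$-level facts to $\CHLoc$, and this is where compactness and Hausdorffness do the work: compactness guarantees that the image $Z$ of $f$ is again compact, while Hausdorffness of the codomain guarantees that the kernel pair of a localic surjection is a closed—hence compact Hausdorff—sublocale of the product, so that both the factorisation and the relevant coequaliser can be computed within the full subcategory. The one external input I rely on is that localic surjections are effective, i.e.\ coequalisers of their kernel pairs in $\Loc$; given this, the remaining steps are the formal identities ``a regular epimorphism that is monic is an isomorphism'' and ``regular epimorphisms are extremal'', together with closure of $\CHLoc$ under finite products. I would take care to verify that the preimage of a closed sublocale is closed (the identity $f^-\mathfrak{c}_v=\mathfrak{c}_{f^* v}$ recorded above) and that a colimit computed in $\Loc$ whose vertex and diagram already lie in $\CHLoc$ is automatically a colimit in $\CHLoc$.
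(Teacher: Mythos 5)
The overall architecture of your argument is reasonable, and several of its components are correct: the (localic surjection, localic injection) factorisation of a map in $\CHLoc$ does stay inside $\CHLoc$ (your compactness computation for the image is fine, and Hausdorffness is hereditary for sublocales since the diagonal of a sublocale is a pullback of the diagonal of the ambient locale), and the kernel pair of a surjection with Hausdorff codomain is indeed a closed sublocale of the product. The genuine gap is precisely the step you single out as your ``one external input'': the claim that every localic surjection is the coequaliser of its kernel pair in $\Loc$, i.e.\ that localic surjections coincide with the regular epimorphisms there. This is false. Localic surjections are exactly the \emph{epimorphisms} of $\Loc$, and epimorphisms are closed under composition; but the paper records, citing \cite[Corollary~3.7]{Plewe2000}, that regular epimorphisms of $\Loc$ are \emph{not} closed under composition --- which is exactly why $\Loc$ fails to be regular. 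Hence the regular epimorphisms of $\Loc$ form a strictly smaller class than the localic surjections, and the hard halves of both \ref{i:monos-chloc} and \ref{i:repis-chloc} in your argument rest on a false premise.

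The gap is repairable, but only by importing a genuinely stronger theorem: \emph{proper} localic surjections are effective, i.e.\ coequalisers of their kernel pairs in $\Loc$ (Vermeulen~\cite{v}; Plewe later extended this to triquotient maps). Since every morphism of $\CHLoc$ is proper --- a compact domain and Hausdorff codomain suffice, as recalled in Section~\ref{sec:locales} --- the surjection $e$ in your factorisation is proper, and with this substitution the remainder of your argument (closedness of the kernel pair, transfer of the coequaliser along the full inclusion, and the formal identities ``regular epi $+$ mono $=$ iso'' and ``regular epis are extremal'') goes through. Note that this is a substantial descent-type theorem, not a formality, so it should be flagged as the real content of the proof. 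For comparison, the paper does not argue this way: it simply cites \cite[\S 3.6]{t}, where the factorisation system of $\CHLoc$ is established directly by preframe techniques rather than via effectivity of surjections in $\Loc$.
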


\begin{remark}
We will see in Section~\ref{s:pretoposes} below that $\CHLoc$ is a pretopos and every epimorphism in a pretopos is regular; hence, all epimorphisms in $\CHLoc$ are regular and they coincide with the localic surjections.
\end{remark}

We next recall some further properties of locales that will be useful in the following: a locale $X$ is
\begin{enumerate}[leftmargin=5em]
\item[\emph{regular}] if every $u \in OX$ is the supremum of those $v \in OX$ that are \emph{well inside} it, i.e.\ such that $\neg v \vee u =1$. 
\item[\emph{normal}] if whenever $a\vee b=1 \in OX$, there are $u,v\in OX$ such that $a\vee u =1$, $b \vee v=1$ and $u\wedge v= 0$. 
\item[\emph{subfit}] if every open sublocale of $X$ is a join of closed ones.\footnote{Classically, a locale $X$ is said to be subfit if for all $u,v\in OX$ such that $u\not\leq v$ there exists $a\in OX$ satisfying $u\vee a =1\neq v\vee a$. The above definition, which is classically equivalent, is preferable from a constructive point of view because it avoids the use of negation.} In terms of nuclei, if for all $u\in OX$
\[\mathfrak{o}_{u}=\bigwedge \{\mathfrak{c}_{v}\mid v\in OX, \ \mathfrak{o}_{u}\leq \mathfrak{c}_{v}\}.\]
\end{enumerate}

\begin{proposition}\label{p:compact-Hausdorff-equiv}
Let $X$ be a compact locale. The following statements are equivalent:
\begin{enumerate}[label=\textup{(}\arabic*\textup{)}]
\item\label{i:Haus} $X$ is Hausdorff.
\item\label{i:reg} $X$ is regular.
\item\label{i:norm-subfit} $X$ is normal and subfit.
\end{enumerate}
\end{proposition}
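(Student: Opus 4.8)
The plan is to establish the two equivalences $(1)\Leftrightarrow(2)$ and $(2)\Leftrightarrow(3)$ separately, with regularity serving as the hub. Throughout I write $v\prec u$ for the relation ``$v$ is well inside $u$'', i.e.\ $\neg v\vee u=1$, and I record two elementary facts I will reuse: first, $\prec$ is closed under finite joins, since $\neg(v\vee v')=\neg v\wedge\neg v'$ and meets distribute over joins; second, $v\prec u$ holds if and only if there is $t\in OX$ with $v\wedge t=0$ and $u\vee t=1$ (for the forward direction take $t=\neg v$). Of the four implications involved, three are either classical or need no compactness, and the whole difficulty sits in showing that a compact Hausdorff locale is regular.

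I would first dispose of the three lighter implications. For $(3)\Rightarrow(2)$, which needs no compactness, normality can be restated as the assertion that whenever $a\vee b=1$ there is $v\prec a$ with $v\vee b=1$: indeed the normality witnesses $u,v$ satisfy $u\wedge v=0$, hence $u\le\neg v$ and $a\vee\neg v\ge a\vee u=1$, so $v\prec a$, while $b\vee v=1$. Setting $a^{\circ}:=\bigvee\{v\mid v\prec a\}\le a$, subfitness then forces $a\le a^{\circ}$: were this to fail, subfitness would produce $c$ with $a\vee c=1$ but $a^{\circ}\vee c\neq 1$, and feeding $a\vee c=1$ into the reformulated normality would yield $v\prec a$ with $v\vee c=1$, whence $a^{\circ}\vee c=1$, a contradiction. (The contradiction is only for transparency; constructively one argues directly from the negation-free formulations of normality and subfitness adopted in the paper, as in~\cite{pp}.) For $(2)\Rightarrow(3)$, regularity implies subfitness by a standard negation-free argument, and compactness enters only to prove normality: given $a\vee b=1$, regularity gives $a=\bigvee\{x\mid x\prec a\}$ and $b=\bigvee\{y\mid y\prec b\}$, so $1=\bigvee\{x\vee y\mid x\prec a,\ y\prec b\}$ is a directed join; compactness yields a single pair $x\prec a$, $y\prec b$ with $x\vee y=1$, and then $u:=\neg x$, $w:=\neg y$ witness normality, as $a\vee u=1$, $b\vee w=1$ and $u\wedge w=\neg(x\vee y)=0$. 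Finally $(2)\Rightarrow(1)$, that regular locales are Hausdorff, is classical and holds for all locales; see~\cite{jo,pp}.

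The crux is $(1)\Rightarrow(2)$: a compact Hausdorff locale is regular. Here Hausdorffness says the diagonal $\Delta\colon X\to X\times X$ is closed, so it is the closed sublocale cut out by $d:=\Delta_*(0)=\bigvee\{s\oplus t\mid s\wedge t=0\}\in O(X\times X)$; equivalently $\Delta_*\Delta^*=d\vee(-)$, which upon evaluating at $1\oplus a$ (using $\Delta^*(1\oplus a)=a$) gives the governing identity $\Delta_* a=d\vee(1\oplus a)$, and in particular $a\oplus 1\le d\vee(1\oplus a)$ for every $a\in OX$. Geometrically this says that the ``off-diagonal'' region $\mathfrak o_a\times\mathfrak c_a$ is contained in the open sublocale $\mathfrak o_d$, i.e.\ is covered by the disjoint rectangles $\mathfrak o_{s\oplus t}$ with $s\wedge t=0$. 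The plan is to invoke compactness of the closed sublocale $\mathfrak c_a$ of the compact locale $X$ to replace this cover by finite disjoint families $(s_i,t_i)_{i\le n}$ with $s_i\wedge t_i=0$ and $a\vee\bigvee_i t_i=1$: for each such family the meet $v:=\bigwedge_i s_i$ satisfies $v\wedge\bigvee_i t_i=0$, hence $v\prec a$ by the criterion recorded above, and the point is that these $v$ join up to $a$, yielding $a=\bigvee\{v\mid v\prec a\}$.

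I expect this last step to be the main obstacle. The difficulty is to carry out the finite extraction pointfree while respecting the quantifier structure of the spatial argument, where the finite family depends on the point of $a$ one is separating from $\mathfrak c_a$; a naive push-forward along the projection collapses this dependence and does not deliver elements genuinely well inside $a$. It is precisely here that the disjointness defining $d$ and compactness must be combined with care. Since this is a standard, if technical, result of constructive locale theory, I would ultimately align the argument with the treatment in~\cite{jo,pp}.
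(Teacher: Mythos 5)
Your proposal is correct and follows essentially the same route as the paper: the same decomposition with regularity as the hub, via the standard facts that compact Hausdorff and compact regular coincide, that regular implies subfit and compact regular implies normal, and that normal subfit implies regular --- all of which the paper handles purely by citation to \cite{ppt,is,pp}. The arguments you do supply (closure of the well-inside relation under finite joins, the directed-join compactness argument for normality, the normal-plus-subfit argument) are the standard correct ones, and the single implication you leave to the literature, namely that a compact Hausdorff locale is regular, is exactly the one the paper also cites rather than proves, so there is no substantive gap relative to the paper's own treatment.
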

\begin{proof}
Compact Hausdorff locales coincide precisely with the compact regular ones, see e.g.\ \cite[Proposition~4.5 and Theorem~6.4]{ppt}, hence~\ref{i:Haus} $\Leftrightarrow$ \ref{i:reg}. Further, every regular locale is subfit (cf.\ \cite[2.3]{is}) and every compact regular locale is normal (see e.g.\ \cite[VII.2.2]{pp}), thus \ref{i:reg} $\Rightarrow$ \ref{i:norm-subfit}. Finally, \ref{i:norm-subfit} $\Rightarrow$ \ref{i:reg} because every normal subfit locale is regular, see e.g.\ \cite[Proposition~4.4]{ppt}.
\end{proof}

\subsubsection{Frames of ideals}\label{s:frames-of-ideals}
Let $L$ be a (bounded) distributive lattice. Recall that an \emph{ideal} on $L$ is a subset $I\subseteq L$ such that 
\begin{enumerate}[label=\textup{(}\roman*\textup{)}]
\item $I$ is closed under finite suprema, i.e.\ $0\in I$ and $x\vee y\in I$ whenever $x,y\in I$;
\item $I$ is downward closed, i.e.\ for all $x,y\in L$ such that $x\leq y$, if $y\in I$ then $x\in I$.
\end{enumerate}
The set $\Idl(L)$ of all ideals on $L$, ordered by set-theoretic inclusion, is a frame. Further, if $h\colon L\to M$ is a lattice homomorphism between distributive lattices, then the map
\[
\Idl(h)\colon \Idl(L)\to\Idl(M), \ \ I \mapsto \down h[I]\coloneqq \{y\in M\mid \exists x\in I \text{ such that } y\leq h(x) \}
\]
is a frame homomorphism. If $\DLat$ denotes the category of (bounded) distributive lattices and lattice homomorphisms, these assignments yield a functor
\begin{equation}\label{eq:ideal-functor-DLat-Frm}
\Idl\colon \DLat \to \Frm
\end{equation}
which is left adjoint to the forgetful functor $\Frm\to\DLat$. The unit $L\to \Idl(L)$ of the adjunction sends an element $x\in L$ to the principal ideal $\down x$, and is clearly injective. Thus, if $L$ is a distributive lattice and $M$ is a frame, any lattice homomorphism $L\to M$ admits a unique extension to a frame homomorphism $\Idl(L)\to M$. 

Consider now the restriction of the functor in eq.~\eqref{eq:ideal-functor-DLat-Frm} to the full subcategory $\BA$ of $\DLat$ defined by Boolean algebras. The frames (isomorphic to one) of the form $\Idl(B)$, with $B$ a Boolean algebra, are known as \emph{Stone frames}. These can be characterised as the distributive lattices $L$ such that the map  
\begin{equation}\label{eq:phi-L}
 \phi_L\colon L \to \Idl(\B(L)), \ \ x\mapsto \{c\in \B(L)\mid c\leq x\}
\end{equation}
is an order isomorphism, where $\B(L)$ denotes the \emph{Boolean center} of $L$, i.e.\ the Boolean algebra of complemented elements of $L$. The locales corresponding to Stone frames are referred to as \emph{Stone locales}. 

\begin{remark}\label{rem:localic-Stone-duality}
Although we shall not need this fact, let us mention in passing that the category $\BA$ of Boolean algebras is dually equivalent to the full subcategory of $\Loc$ defined by Stone locales. This is a localic version of the celebrated Stone duality  between Boolean algebras and Stone spaces (i.e., zero-dimensional compact Hausdorff spaces)~\cite{Stone1936}.
\end{remark}

The inclusion functor $\CHLoc\into \Loc$ has a left adjoint $\beta \colon \Loc \to \CHLoc$, known as the \emph{localic Stone-\v{C}ech compactification} functor. When $X$ is a locale whose corresponding frame is a Boolean algebra $B$, the frame $O(\beta X)$ is isomorphic to $\Idl(B)$, see e.g.\ \cite[p.~310]{BM1980}. For an arbitrary locale $Y$, the frame $O(\beta Y)$ can be identified with a \emph{subframe} of $\Idl(OY)$, cf.\ \cite[Proposition~3]{BM1980}. 
It follows that $\CHLoc$ is a complete and cocomplete category: in fact, $\CHLoc$ is closed under limits in $\Loc$, and colimits in $\CHLoc$ can be computed by first taking the colimit in $\Loc$ and then applying the functor $\beta$.

\subsection{Regular and coherent categories, and pretoposes}\label{sec:reg-coh-pretoposes}
Fix an arbitrary category~$\K$. Given an object $X \in \K$, we denote by $\Sub{X}$ the poset of subobjects of~$X$. 
We will often identify a subobject of $X$ with any of its representatives $m\colon S \mono X$, and sometimes even with just the domain $S$ of such a representative. Recall that 
\[
(m_1\colon  S_1 \mono X) \leq (m_2\colon  S_2 \mono X)
\]
in $\Sub{X}$ if, and only if, there is a morphism $u\colon  S_1 \to S_2$ satisfying $m_2 \cdot u = m_1$. 

There is, a priori, no reason why $\Sub{X}$ should be a set, as opposed to a proper class.  Categories in which every object has only a set of subobjects are called \emph{well-powered}. For instance, the category $\Loc$ is not well-powered, see e.g.\ \cite[p.~70]{pp}. However, all the (generic) categories considered in this paper are assumed to be well-powered.

If $\K$ has pullbacks, each poset $\Sub{X}$ is a $\wedge$-semilattice with top element, where the meet of two subobjects is given by their pullback (recall that monomorphisms are stable under pullback), and the top element by the identity arrow $X \to X$. 
Moreover, every morphism $f\colon  X \to Y$ in $\K$ induces a $\wedge$-semilattice homomorphism
\[f^{-1}\colon  \Sub{Y} \to \Sub{X}\]
preserving the top element, which sends a subobject $m\colon S \mono Y$ to the pullback of $m$ along $f$.  In fact, this assignment yields a functor
\[\sub\colon \K^\op \to \slat\] 
into the category of $\wedge$-semilattices and $\wedge$-semilattice homomorphisms preserving the top elements.
As suggested by the notation, $f^{-1}$ may be thought of as the \emph{preimage} map associated with~$f$.

\begin{example}\label{ex:subobjects-CHLoc}
  In $\CHLoc$, the subobjects of a compact Hausdorff locale can be identified with its closed sublocales (see e.g.\ \cite[p.~92]{t}). Further, the preimage map $f^{-1}\colon  \Sub{Y} \to \Sub{X}$ corresponding to an arrow $f\colon X\to Y$ in $\CHLoc$ can be described, in terms of nuclei, as the order-dual of the map $f^-\colon NY\to NX$ (defined in Section~\ref{s:sublocales}) restricted to closed nuclei. In other words, for all $v\in OY$, $f^{-1}$ sends the closed sublocale of $Y$ corresponding to the nucleus~$\mathfrak{c}_v$ to the closed sublocale of $X$ corresponding to~$\mathfrak{c}_{f^* v}$.
\end{example}

\subsubsection{Regular and coherent categories}
We shall now focus on two classes of categories, namely regular and coherent categories, defined by properties of the corresponding functors of subobjects $\sub\colon \K^\op \to \slat$. Suppose that $\K$ is a category admitting pullbacks.
For every arrow $f\colon  X \to Y$ in $\K$, the preimage map $f^{-1}\colon \Sub{Y} \to \Sub{X}$ is right adjoint if, and only if, for every subobject $m\colon S \mono X$ there exists a smallest subobject of~$Y$ through which $f \cdot m$ factors. If such a subobject of $Y$ exists, we shall denote it by $f[S]$ and refer to it as the \emph{image of $S$} under $f$. Suppose for a moment that $f^{-1}$ is right adjoint. In that case, we shall write
\[
  f[-]\colon \Sub{X}\to \Sub{Y}
\]
for its left adjoint---the \emph{image} map associated with $f$. In particular, considering the identity $X\to X$, we see that $f$ admits an image factorisation
\begin{equation}\label{eq:factor-image}
  X\to f[X]\mono Y.
\end{equation}
We say that $f$ has a \emph{pullback-stable image} if, for any morphism $g\colon Z\to Y$, taking the pullback of diagram~\eqref{eq:factor-image} along $g$ yields the image factorisation of the pullback of~$f$ along~$g$.

\begin{definition}
A category is \emph{regular} if it has finite limits and every morphism has a pullback-stable image.
\end{definition}

Given an arrow $f\colon X\to Y$ in a regular category, the morphism $X\to f[X]$ in eq.~\eqref{eq:factor-image} is a regular epimorphism. In fact, every regular category admits a (regular epimorphism, monomorphism) factorisation system given by image factorisations.
The following properties of regular categories will be useful in the following; for a proof, see e.g.\ \cite[Propositions~3.8 and~3.9, p.~142]{BarrGrilletOsdol1971}.
\begin{lemma}\label{l:reg-epi-1-1}
  Let $\K$ be a regular category and let $f$ be a morphism in
  $\K$. 
  \begin{enumerate}[label=\textup{(}\alph*\textup{)}]
  \item\label{item:1} If $f$ is a monomorphism then $f[-]$ is
    injective.
  \item\label{item:2} If $f$ is a regular epimorphism then $f[-]$ is
    surjective.
  \end{enumerate}
\end{lemma}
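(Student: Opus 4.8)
The plan is to exploit the adjunction $f[-] \dashv f^{-1}$ recorded above. Since both maps are monotone maps between posets, $f[-]$ is injective precisely when the unit $S \leq f^{-1}(f[S])$ is an equality for every $S \in \Sub{X}$, and it is surjective precisely when the counit $f[f^{-1}(T)] \leq T$ is an equality for every $T \in \Sub{Y}$. (In one direction these are immediate; in the other they follow from the triangle identities, which in a poset force the composite inequalities to be equalities.) Thus in each part it suffices to upgrade the relevant triangle inequality to an equality, using the hypothesis on $f$.

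For part~\ref{item:1}, fix a subobject $m\colon S \mono X$ and represent $f[S]$ by the composite $f \cdot m$, which is a monomorphism because both $f$ and $m$ are; hence its image factorisation is trivial and $f[S]$ is the subobject represented by $f \cdot m$. I would then compute the pullback $f^{-1}(f[S])$ of $f \cdot m\colon S \mono Y$ along $f\colon X \to Y$ directly. The square with top edge $m\colon S \to X$, left edge $\mathrm{id}_S$, bottom edge $f \cdot m$, and right edge $f$ commutes, and it is a pullback precisely because $f$ is a monomorphism: any competitor $a\colon Z \to X$, $b\colon Z \to S$ with $f \cdot a = f \cdot m \cdot b$ forces $a = m \cdot b$, so $b$ is the unique mediating map. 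Therefore $f^{-1}(f[S]) = S$, the unit is an equality, and $f[-]$ is injective.

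For part~\ref{item:2}, fix $T \in \Sub{Y}$ represented by $n\colon T \mono Y$, take $S = f^{-1}(T)$, and show $f[f^{-1}(T)] = T$. Form the pullback square defining $f^{-1}(T)$, with projections $p\colon f^{-1}(T) \mono X$ (a monomorphism, being a pullback of $n$) and $q\colon f^{-1}(T) \to T$, so that $f \cdot p = n \cdot q$. The key point is that $q$, being the pullback of the regular epimorphism $f$ along $n$, is again a regular epimorphism, since regular epimorphisms are stable under pullback in a regular category (a consequence of the pullback-stability of images). Consequently $f \cdot p = n \cdot q$ is already presented as a regular epimorphism followed by a monomorphism; by uniqueness of the (regular epimorphism, monomorphism) factorisation this is the image factorisation of $f \cdot p$. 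Hence $f[f^{-1}(T)]$, the image of $f^{-1}(T)$ under $f$, equals the subobject represented by $n$, namely $T$. The counit is thus an equality and $f[-]$ is surjective.

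The adjunction generalities and the direct pullback verification in part~\ref{item:1} are routine. I expect the crux to be part~\ref{item:2}: the argument rests essentially on the \emph{regularity} of $\K$---specifically, on pullback-stability of regular epimorphisms and the resulting uniqueness of image factorisations---rather than on the mere existence of finite limits. Identifying $q$ as a pullback of $f$ and invoking stability cleanly is the step I would take most care over.
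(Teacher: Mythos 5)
Your proof is correct. The paper itself gives no argument for this lemma, only a citation to Barr--Grillet--van Osdol, and what you have written is precisely the standard proof one finds there: part~\ref{item:1} by exhibiting the unit square as a pullback using that $f$ is monic, and part~\ref{item:2} by pulling back $n\colon T\mono Y$ along $f$, noting that the pullback stability of images forces the pulled-back map $q$ to be a regular epimorphism, and invoking uniqueness of (regular epi, mono) factorisations. Both the adjunction reduction and the identification of the crux in part~\ref{item:2} are accurate.
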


We shall now look at the case where the functor of subobjects corestricts to the category $\DLat$ of distributive lattices.
\begin{definition}
  A \emph{coherent category} is a regular category in which every poset (in fact, $\wedge$-semilattice) of subobjects has finite joins and, for every morphism
  $f\colon X\to Y$, the preimage map
  \[f^{-1}\colon \Sub{Y}\to \Sub{X}\] preserves them.
\end{definition}

\begin{example}
The category $\CHLoc$ of compact Hausdorff locales is coherent (and even a pretopos, see Example~\ref{ex:CHLoc-pretopos} below). On the other hand, $\Loc$ is not regular, let alone coherent, as its regular epimorphisms are not stable under composition~\cite[Corollary~3.7]{Plewe2000}. 
\end{example}

For every object $X$ of a coherent category $\K$, its poset of subobjects $\Sub{X}$ is a distributive lattice (see e.g.\ \cite[Lemma~A.1.4.2]{el1}), and the preimage maps $f^{-1}$ are lattice homomorphisms. Therefore, the functor $\sub\colon \K^\op\to \slat$ corestricts to a functor
\[
\sub\colon  \K^\op \to \DLat.
\]

Every coherent category has an initial object $\zero$ and the latter is \emph{strict}, meaning that every morphism $X\to \zero$ is an isomorphism; for a proof, see e.g.\ \cite[Lemma~A.1.4.1]{el1}.

\subsubsection{Pretoposes}\label{s:pretoposes}
A central notion for this paper is that of pretopos, which can be regarded as a special type of coherent category. Recall that a category is said to be
\begin{enumerate}[leftmargin=5em]
\item[\emph{positive}] if finite coproducts exist and are \emph{disjoint}, i.e.\ the pullback of any coproduct diagram $X \rightarrow X + Y \leftarrow Y$ yields the initial object $\zero$.
\item[\emph{effective}] if it is regular and every internal equivalence relation is the kernel pair of its coequaliser.\footnote{The \emph{kernel pair} of a morphism $f$ is the pullback of $f$ along itself. Effective categories are also called \emph{Barr-exact}; for a more thorough treatment, see e.g.~\cite{BarrGrilletOsdol1971} or \cite[\S\S 2.5--2.6]{Borceux2}.}
\end{enumerate}

\begin{definition}
A \emph{pretopos} is a positive and effective coherent category.
\end{definition}  

Equivalently, a pretopos can be defined as an effective category with finite coproducts that is \emph{extensive}; recall that, in the presence of finite coproducts and pullbacks, a category is extensive precisely when binary coproducts are disjoint and \emph{universal}, i.e.\ the pullback of a coproduct diagram  
$X \rightarrow X + Y \leftarrow Y$ 
along any morphism yields a coproduct diagram (see \cite[Proposition~2.14]{CLW93}).

\begin{example}\label{ex:CHLoc-pretopos}
The category $\CHLoc$ of compact Hausdorff locales is a pretopos; see \cite[Theorem~3.6.3]{t} for the fact that $\CHLoc$ is regular, and \cite[Theorem~4.4]{KarazerisTsamis2021} for the remaining properties. Note that, in view of Lemma~\ref{l:CHLoc-monos-repis}, the factorisation system on $\CHLoc$ determined by image factorisations is given by (localic surjections, localic injections).
\end{example}

All epimorphisms in a pretopos are regular. In particular, pretoposes are \emph{balanced} categories, meaning that every arrow that is both monic and epic must be an isomorphism. See e.g.\ \cite[Corollary~A.1.4.9]{el1} for a proof of these statements.

In the remainder of this section, we recall some well known facts about Cartesian functors and pretopos morphisms. A \emph{Cartesian} functor is a functor $F\colon \cat{C}\to\cat{D}$ between finitely complete categories that preserves finite limits. In particular, since $F$ preserves monomorphisms, for every object $X\in \cat{C}$ there is a well-defined (monotone) map
\begin{equation}\label{eq:full-bij-on-subs}
\Sub{X} \to \Sub{FX}, \ \ (m\colon S\mono X) \mapsto (Fm \colon FS\mono FX).
\end{equation}
We shall say that $F$ is \emph{full on subobjects} (respectively, \emph{bijective on subobjects}) if the previous map is surjective (respectively, bijective) for all objects $X\in \cat{C}$.
Moreover, recall that a functor is \emph{conservative} if it reflects isomorphisms.

A \emph{pretopos morphism} is a Cartesian functor between pretoposes that preserves finite coproducts and coequalisers of internal equivalence relations.
For the following characterisation of pretopos morphisms, cf.\ \cite[Proposition~2.4.4]{ma}.
\begin{lemma}\label{l:pretopos-morphism-iff-coherent}
A Cartesian functor between pretoposes is a pretopos morphism precisely when it preserves finite coproducts and regular epimorphisms.
\end{lemma}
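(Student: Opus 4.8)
The plan is to prove both implications by systematically exploiting the effectiveness (Barr-exactness) of the two pretoposes involved, which allows us to translate freely between regular epimorphisms, their kernel pairs, and coequalisers of internal equivalence relations. Throughout, let $F\colon \cat{C}\to\cat{D}$ be the Cartesian functor in question, with $\cat{C}$ and $\cat{D}$ pretoposes, and recall that being Cartesian it preserves all finite limits, in particular kernel pairs.

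For the forward implication, suppose $F$ is a pretopos morphism and let $f\colon X \repi Y$ be a regular epimorphism in $\cat{C}$. Since $\cat{C}$ is effective, $f$ is the coequaliser of its kernel pair $R \rightrightarrows X$, and this kernel pair is an internal equivalence relation. As $F$ preserves coequalisers of internal equivalence relations by hypothesis, $Ff$ is the coequaliser of $FR \rightrightarrows FX$; being a coequaliser, it is in particular a regular epimorphism. (Since $F$ is Cartesian, $FR \rightrightarrows FX$ is again an internal equivalence relation, although this observation is not strictly needed here.)

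For the converse, suppose $F$ preserves finite coproducts and regular epimorphisms; we must show it preserves the coequaliser of an arbitrary internal equivalence relation $R \rightrightarrows X$ in $\cat{C}$. Let $q\colon X \repi Q$ denote this coequaliser. Because $\cat{C}$ is effective, $R$ is the kernel pair of $q$ and $q$ is a regular epimorphism. Applying $F$: since $F$ preserves regular epimorphisms, $Fq$ is a regular epimorphism in $\cat{D}$; and since $F$ is Cartesian, it preserves this kernel pair, so $FR$ is the kernel pair of $Fq$. Finally, because $\cat{D}$ is also effective, the regular epimorphism $Fq$ is the coequaliser of its own kernel pair $FR \rightrightarrows FX$. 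This is precisely the assertion that $F$ preserves the coequaliser $q$, and together with the hypothesis on finite coproducts it shows that $F$ is a pretopos morphism.

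The crux of the argument---and the only point requiring care---is the symmetric use of effectiveness at both ends: in $\cat{C}$ to present every regular epimorphism as the coequaliser of an equivalence relation (and every equivalence relation as a kernel pair), and in $\cat{D}$ to recognise that a regular epimorphism is automatically the coequaliser of its kernel pair. I expect the preservation of finite coproducts to play no role in either implication; it is simply carried along as a common clause in both characterisations.
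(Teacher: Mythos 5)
Your argument is correct and complete; the paper itself offers no proof of this lemma, deferring instead to Makkai's Proposition~2.4.4, and what you have written is precisely the standard argument underlying that citation (effectiveness of $\cat{C}$ to identify an equivalence relation with the kernel pair of its coequaliser, preservation of kernel pairs by Cartesianness, and the fact that a regular epimorphism is the coequaliser of its kernel pair). The only quibble is one of attribution: the last step needs only that $\cat{D}$ has kernel pairs, not its effectiveness, and likewise the forward direction uses only regularity of $\cat{C}$, so exactness is genuinely invoked just once, in $\cat{C}$, for the converse.
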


The following conditions are equivalent for any pretopos morphism $F\colon \cat{C}\to \cat{D}$:
\begin{enumerate}[label=\textup{(}\arabic*\textup{)}]
\item\label{i:faithful} $F$ is faithful;
\item\label{i:conservative} $F$ is conservative;
\item\label{i:inj-on-subs} for all objects $X\in\cat{C}$, the map in eq.~\eqref{eq:full-bij-on-subs} is injective.
\end{enumerate}
Just observe that, since faithful functors reflect epimorphisms and monomorphisms, a faithful functor defined on a balanced category is conservative. Hence, \ref{i:faithful} $\Rightarrow$ \ref{i:conservative}. 
For the implication \ref{i:conservative} $\Rightarrow$ \ref{i:inj-on-subs}, cf.\ e.g.\ \cite[p.~125]{mrey}, while \ref{i:inj-on-subs} $\Rightarrow$ \ref{i:faithful} follows by considering the equaliser of any pair of parallel maps in $\cat{C}$ identified by $F$.

Let us say that a pretopos morphism is an \emph{embedding} if it is fully faithful. Combining the previous characterisation of faithful pretopos morphisms with the fact that a conservative Cartesian functor is full whenever it is full on subobjects (cf.\ e.g.\ \cite[p.~262]{ma}), we obtain the following criterion for a pretopos morphism to be an embedding.
\begin{lemma}\label{l:pretopos-mor-full-faithful}
A pretopos morphism is an embedding provided it is bijective on subobjects.
\end{lemma}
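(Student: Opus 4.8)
The goal is to show that a pretopos morphism $F\colon \cat{C}\to\cat{D}$ that is bijective on subobjects is fully faithful. The plan is to establish faithfulness first, then fullness, leaning on the two facts recalled just before the statement.

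First I would prove faithfulness. By the equivalence \ref{i:faithful} $\Leftrightarrow$ \ref{i:inj-on-subs} noted above, it suffices to observe that bijectivity on subobjects implies injectivity on subobjects, which is the content of condition \ref{i:inj-on-subs}. Hence $F$ is faithful, and therefore also conservative (using \ref{i:faithful} $\Rightarrow$ \ref{i:conservative}), since a faithful functor on a balanced category is conservative and pretoposes are balanced. This sets up the two hypotheses needed for the fullness step: $F$ is a conservative Cartesian functor that is full on subobjects (indeed bijective on subobjects, which is stronger).

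The fullness step is where the real content sits, but the excerpt explicitly hands us the key external input: \emph{a conservative Cartesian functor is full whenever it is full on subobjects} (cited as \cite{ma}, p.~262). So the plan is simply to verify that $F$ meets the hypotheses of that result. Being bijective on subobjects, $F$ is in particular \emph{full on subobjects} in the sense that the map of eq.~\eqref{eq:full-bij-on-subs} is surjective for every object. Combined with conservativity from the previous paragraph, the cited result yields that $F$ is full. Together with faithfulness, this gives that $F$ is an embedding, i.e.\ fully faithful, as required.

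The main obstacle is conceptual rather than computational: one must recognise that the statement is essentially an assembly of the two preceding facts, and that bijectivity on subobjects is being used in two distinct ways --- injectivity feeds the faithfulness/conservativity chain, while surjectivity supplies the ``full on subobjects'' hypothesis. Since both supporting facts are quoted from the literature and may be assumed, there are no genuine calculations to grind through; the proof is short, and the only care required is to cite the correct equivalences and to note that the pretopos setting guarantees the balancedness used in passing from faithful to conservative.
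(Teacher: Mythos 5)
Your proof is correct and matches the paper's own argument, which likewise obtains faithfulness from injectivity on subobjects via the stated equivalence and then invokes the cited fact that a conservative Cartesian functor that is full on subobjects is full. No gaps.
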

With regard to essential surjectivity, let us say that a pretopos morphism
\[
F\colon \cat{C}\to \cat{D}
\]
\emph{covers} an object $d\in \cat{D}$ if there exist an object $c\in\cat{C}$ and a (regular) epimorphism $Fc\repi d$. Further, $F$ \emph{covers its codomain} if it covers each object of $\cat{D}$.
The next observation is a consequence of \cite[Lemma~7.1.7]{mrey}.
\begin{lemma}\label{l:cover-essential-image}
Let $F\colon \cat{C}\to\cat{D}$ be a pretopos morphism that is bijective on subobjects. If an object $d\in\cat{D}$ is covered by $F$ then it belongs to the essential image of $F$, i.e.\ there exist $c\in \cat{C}$ and an isomorphism $Fc\cong d$.
\end{lemma}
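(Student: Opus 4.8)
The plan is to realise $d$ as the image under $F$ of a quotient built inside $\cat{C}$, exploiting that both categories are effective. Since $d$ is covered by $F$, fix an object $c\in\cat{C}$ and a regular epimorphism $p\colon Fc\repi d$ in $\cat{D}$. As $\cat{D}$ is a pretopos, hence effective, $p$ is the coequaliser of its kernel pair, an internal equivalence relation $R\mono Fc\times Fc$. Using that $F$ preserves finite products, we regard $R$ as a subobject of $F(c\times c)$, and because $F$ is bijective on subobjects there is a unique subobject $\tilde{R}\mono c\times c$ in $\cat{C}$ with $F\tilde{R}\cong R$ in $\Sub{F(c\times c)}$. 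Since this isomorphism is compatible with the inclusion into $F(c\times c)$, it also matches the two projections to $Fc$, so $F\tilde R$ is $R$ as an internal relation.

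The crux is to show that $\tilde{R}$ is itself an internal equivalence relation on $c$, so that its quotient may be formed. I would first upgrade bijectivity on subobjects to the statement that, for every $X\in\cat{C}$, the comparison map $\Sub{X}\to\Sub{FX}$ is an \emph{isomorphism of posets}: it preserves finite meets (as $F$ preserves pullbacks and monomorphisms) and finite joins (as a pretopos morphism $F$ preserves finite coproducts and regular epimorphisms, hence images), so it is a bijective homomorphism of distributive lattices and therefore an order isomorphism. In particular $F$ reflects the order on subobjects, and since it also commutes with preimages and images it reflects each of the conditions defining reflexivity, symmetry and transitivity. Concretely, these conditions are the subobject containments expressing that the diagonal lies below $\tilde R$, that the image of $\tilde R$ under the swap $c\times c\to c\times c$ lies below $\tilde R$, and that the image of the composite $\tilde R\times_c\tilde R\to c\times c$ lies below $\tilde R$; applying $F$ turns each of these into the corresponding containment for $R=F\tilde R$, which holds. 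Reflecting back yields that $\tilde R$ is reflexive, symmetric and transitive in $\cat{C}$.

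Now, as $\cat{C}$ is effective, the equivalence relation $\tilde R$ admits a coequaliser $q\colon c\repi c/\tilde R$ of which it is the kernel pair. Because $F$ is a pretopos morphism it preserves coequalisers of internal equivalence relations, so $Fq\colon Fc\to F(c/\tilde R)$ is the coequaliser of $F\tilde R\cong R$ in $\cat{D}$. But $p\colon Fc\repi d$ is, by construction, the coequaliser of the same equivalence relation $R$; by uniqueness of coequalisers up to isomorphism there is an isomorphism $F(c/\tilde R)\cong d$ compatible with $Fq$ and $p$. Hence $d$ lies in the essential image of $F$, witnessed by the object $c/\tilde R\in\cat{C}$.

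The step I expect to be the main obstacle is verifying that $\tilde R$ is an equivalence relation, that is, that $F$ reflects this property. The point to get right is that bijectivity on subobjects upgrades to an order isomorphism $\Sub{X}\cong\Sub{FX}$ commuting with preimages and images; this is exactly what lets the defining containments be transported back from $\cat{D}$ to $\cat{C}$, and everything else is then a formal consequence of effectiveness and the uniqueness of coequalisers (cf.\ \cite[Lemma~7.1.7]{mrey}).
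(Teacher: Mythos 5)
Your proof is correct, and it is exactly the standard argument behind the citation the paper gives in lieu of a proof (the statement is deduced there from \cite[Lemma~7.1.7]{mrey}): transport the kernel pair of the covering epimorphism back along the subobject bijection, check it is an equivalence relation by reflecting the defining containments through the order isomorphism $\Sub{c\times c}\cong\Sub{F(c\times c)}$, and use effectiveness of $\cat{C}$ together with preservation of quotients of equivalence relations. The only cosmetic slip is attributing ``every regular epimorphism is the coequaliser of its kernel pair'' to effectiveness rather than to regularity of $\cat{D}$; this does not affect the argument.
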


Recall that a functor is a \emph{weak equivalence} if it is full, faithful and essentially surjective; every equivalence (i.e., a functor admitting a quasi-inverse) is a weak equivalence, but the converse requires the axiom of choice for classes.
Combining Lemmas~\ref{l:pretopos-mor-full-faithful} and~\ref{l:cover-essential-image}, we see that a pretopos morphism is a weak equivalence precisely when it is an embedding that covers its codomain.

\section{Filtral categories and the subobject functor into $\CHLoc$}\label{s:sub-fun-into-CHLoc}
 
 \subsection{Filtral categories}\label{ss:filtral-cats}
Since the category of compact Hausdorff locales is coherent (and even a pretopos, see Example~\ref{ex:CHLoc-pretopos}), there is a functor
\[
\sub\colon\CHLoc^\op \to \DLat.
\]
Note that, for every $X\in\CHLoc$, $\Sub{X}^\op$ is isomorphic to the frame of closed nuclei on $X$ (cf.\ Example~\ref{ex:subobjects-CHLoc}), and the latter frame is isomorphic to $OX$ via the map $u \mapsto \mathfrak{c}_u$.
In fact, when regarded as a functor into the category of (compact Hausdorff) locales, the functor $\Sub{-}^\op$ is naturally isomorphic to the identity functor of $\CHLoc$, i.e.\ 
\[
\Sub{-}^\op\colon \CHLoc \to \CHLoc
\]
is an equivalence of categories.
  
We shall exploit the previous observation to tackle the problem of characterising, up to weak equivalence, the category of compact Hausdorff locales. To this end, we recall from~\cite{mr} the notion of filtral category.\footnote{The definition given here differs slightly from the original one, as in the present article we opted to work with ideals rather than filters; the two definitions are easily seen to be equivalent.} To start with, consider a (bounded) distributive lattice $L$ and denote by $\B(L)$ its Boolean center. Recall from eq.~\eqref{eq:phi-L} the monotone map
 \[
 \phi_L\colon L \to \Idl(\B(L)), \ \ \phi_L(u)\coloneqq \{c\in \B(L)\mid c\leq u\}.
 \]
The lattice $L$ is said to be \emph{filtral} if $\phi_L$ is an order isomorphism. By extension, we say that an object $X$ of a coherent category is \emph{filtral} if the lattice $\Sub{X}^\op$ is filtral. Equivalently, $X$ is filtral if, and only if, $\Sub{X}^\op$ is a Stone locale (cf.\ Section~\ref{s:frames-of-ideals}).
 
 \begin{definition}
 A category is \emph{filtral} if it is coherent and each of its objects is covered by a filtral one, i.e.\ for every object $X$ there exist a filtral object $S$ and a regular epimorphism $S \repi X$.
 \end{definition} 
 
 \begin{example}
 The category $\CHLoc$ is filtral; this amounts to the well-known fact that every compact Hausdorff locale is the localic image of a Stone locale, e.g.\ via its \emph{Gleason cover} (see~\cite{Johnstone1980} or~\cite[\S D4.6]{Elephant2}).
 \end{example}

The notion of filtrality for coherent categories was introduced in~\cite{mr} to obtain a characterisation of the category $\KHaus$ of compact Hausdorff spaces and continuous maps which we now recall. 
Let us say that a category $\cat{C}$ is \emph{non-trivial} if it admits two non-isomorphic objects (in Section~\ref{ss:non-triviality} below we shall consider a different notion of \emph{non-triviality} that is better suited for a constructive approach). If $\cat{C}$ has a terminal object then it is \emph{well-pointed} if, for all morphisms $f,g\colon X\to Y$ in $\cat{C}$, $f = g$ whenever $f\cdot p = g\cdot p$ for all morphisms $p\colon \one \to X$. 
\begin{theorem}[{\cite[Theorem~5.1]{mr}}]\label{th:mr-main}
Up to equivalence, $\KHaus$ is the unique non-trivial well-pointed pretopos that is filtral and has all set-indexed copowers of its terminal object.
\end{theorem}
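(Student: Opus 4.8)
The plan is to prove the two halves of the statement separately: first that $\KHaus$ itself is a non-trivial well-pointed filtral pretopos with set-indexed copowers of $\one$, and then that any category with these properties is equivalent to $\KHaus$. For the first half the verifications are essentially routine. The pretopos structure of $\KHaus$ is the spatial incarnation of Example~\ref{ex:CHLoc-pretopos}: under the prime ideal theorem every compact Hausdorff locale is spatial, so $\CHLoc\simeq\KHaus$. Non-triviality is witnessed by $\zero\not\cong\one$. Well-pointedness holds because $\one$ is the one-point space, so global points $\one\to X$ are exactly the points of $X$ and continuous maps are determined by their values on points. Filtrality follows from the fact that every compact Hausdorff space is a continuous surjective image of a Stone space (e.g.\ its Gleason cover, or the counit of the Stone--\v{C}ech adjunction applied to the underlying discrete set), together with Stone duality, which identifies $\Sub{S}^{\op}$ for a Stone space $S$ with a Stone frame. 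Finally the copower $\coprod_I\one$ is the Stone--\v{C}ech compactification $\beta I$ of the discrete space on $I$.

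For the uniqueness half, let $\K$ be a non-trivial well-pointed filtral pretopos with copowers of $\one$. The first step is to observe that $\one$ is an \emph{atom}, i.e.\ $\Sub{\one}=\{\zero,\one\}$. Indeed, in a well-pointed category any object with no points is initial: the two coproduct injections into $\one+\one$, precomposed with the unique map out of a pointless object, agree on all points and hence are equal, which by disjointness of the coproduct and strictness of $\zero$ forces the object to be $\zero$. Consequently a nonzero subterminal admits a point and is isomorphic to $\one$, so non-triviality gives $\Sub{\one}=\{\zero,\one\}$. By the analysis of Section~\ref{s:CH-spaces}, atomicity of $\one$ yields the \emph{compatible filtrality} property, so the hypotheses of Theorem~\ref{t:Sub-pretopos-morphism} are in force. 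I would then take the functor $\F\colon\K\to\CHLoc$ of Section~\ref{s:sub-fun-into-CHLoc}, built from $\Sub{X}^{\op}$, which by Theorem~\ref{t:Sub-pretopos-morphism} is a pretopos embedding that is bijective on subobjects.

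It remains to make $\F$ essentially surjective and to transport it to $\KHaus$. Working classically with the prime ideal theorem, every compact Hausdorff locale is spatial, so the points functor is an equivalence $\CHLoc\simeq\KHaus$; composing, $\F$ becomes a fully faithful pretopos morphism into $\KHaus$. Essential surjectivity is exactly where the copowers of $\one$ enter: by Theorem~\ref{th:spatial-locales-covered} the essential image of $\F$ contains all spatial compact Hausdorff locales, which classically is all of $\CHLoc\simeq\KHaus$. Concretely, each space $Y$ is covered by $\F$ applied to the copower $\coprod_I\one$ indexed by the points $I$ of $Y$, whose image is the Stone--\v{C}ech cover $\beta I\repi Y$, and Lemma~\ref{l:cover-essential-image} upgrades ``covered'' to ``lies in the essential image'' using bijectivity on subobjects. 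Thus $\F$ is an embedding that covers its codomain, hence a weak equivalence, and---invoking the axiom of choice for classes, available in the classical metatheory---an actual equivalence $\K\simeq\KHaus$.

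The main obstacle is packaged into Theorem~\ref{t:Sub-pretopos-morphism} and the covering step. The bijection on subobjects is morally the tautology that the closed sublocales of the locale attached to $\Sub{X}^{\op}$ correspond to the elements of $\Sub{X}$, but the genuine work lies in checking that $\F$ lands in $\CHLoc$ at all, is Cartesian---preservation of finite limits being precisely what compatible filtrality is engineered to guarantee---and preserves finite coproducts and regular epimorphisms, together with naturality of the preimage maps. The second delicate point is the essential-surjectivity step, which hinges on $\F$ sending the copower $\coprod_I\one$ to $\beta I$ so that the classical cover $\beta I\repi Y$ is realised inside the image of $\F$; this is exactly the role played by the hypothesis that $\K$ admits set-indexed copowers of its terminal object.
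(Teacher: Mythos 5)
Your proof is correct and follows essentially the same route as the paper: well-pointedness plus non-triviality make $\one$ an atom, Proposition~\ref{p:one-atom-comp-filtral} then gives compatible filtrality, and Theorems~\ref{t:Sub-pretopos-morphism} and~\ref{th:spatial-locales-covered}, combined with the spatiality of compact Hausdorff locales under the prime ideal theorem and the equivalence $\CHLoc\simeq\KHaus$, yield the weak equivalence (upgraded to an equivalence by choice for classes). The only hypothesis you leave unverified is that $\K$ has \emph{enough subobjects}---required by both Proposition~\ref{p:one-atom-comp-filtral} and Theorem~\ref{t:Sub-pretopos-morphism}---but this is immediate from well-pointedness since points $\one\to X$ are particular subobjects, exactly as the paper notes in its closing paragraph.
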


\begin{remark}
The previous result was obtained in a classical setting, i.e.\ under the (implicit) assumptions that the ambient logic is classical and that the prime ideal theorem holds. Its proof constructs a \emph{weak} equivalence; the existence of an equivalence as in the statement above follows from an application of the axiom of choice for classes.
\end{remark}

It was shown in~\cite[Lemma~4.4]{mr} that, if
$\K$ is a filtral category, the subobject functor 
\[
\sub\colon  \K^\op \to \DLat
\] 
corestricts to the category of coframes and coframe homomorphisms. Hence, for every object $X \in \K$, the order-dual $\Sub{X}^\op$
of $\Sub{X}$ is a frame and we have a functor $\K^\op \to
\Frm$ sending an object $X \in \K$ to $\Sub{X}^\op$ and a
morphism $f\colon X \to Y$ to~$f^{-1}$, now seen as a frame homomorphism $\Sub{Y}^\op
\to \Sub{X}^\op$. When passing to the order-duals, the adjoint pair $f[-] \dashv f^{-1}\colon \Sub{Y} \to \Sub{X}$ yields an adjoint pair 
\[
f^{-1} \dashv f[-] \colon \Sub{X}^\op \to \Sub{Y}^\op,
\] 
thus we can regard $f[-] \colon \Sub{X}^\op \to \Sub{Y}^\op$ as a localic map. To sum up, 

\begin{lemma}\label{l:2} 
Let $\K$ be a filtral category. The assignments $X \mapsto \Sub{X}^\op$ and $f
  \mapsto f[-]$ define a functor $\K\to \Loc$ into the category of locales.
\end{lemma}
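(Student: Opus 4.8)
The plan is to exploit the definitional identity $\Loc = \Frm^{\op}$ and reduce the claim to the functoriality of the subobject functor already recorded in Section~\ref{sec:reg-coh-pretoposes}. Concretely, I would produce a functor $\K^{\op}\to\Frm$ carrying $X$ to the frame $\Sub{X}^\op$ and a morphism $f$ to the frame homomorphism $f^{-1}\colon \Sub{Y}^\op\to\Sub{X}^\op$; reading this across the duality $\Loc = \Frm^{\op}$ then yields the desired functor $\K\to\Loc$, with the localic map attached to $f$ recorded by its direct-image part $f[-]$.

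First I would check that the assignments are well defined. On objects, the filtrality hypothesis together with \cite[Lemma~4.4]{mr} guarantees that $\sub\colon \K^{\op}\to\DLat$ corestricts to the category of coframes; hence each $\Sub{X}^\op$ is a frame. On morphisms, the same result tells us that $f^{-1}\colon \Sub{Y}\to\Sub{X}$ is a coframe homomorphism, so that $f^{-1}\colon \Sub{Y}^\op\to\Sub{X}^\op$ is a frame homomorphism and therefore determines a localic map $\Sub{X}^\op\to\Sub{Y}^\op$. Its direct-image part is obtained by dualising the adjunction $f[-]\dashv f^{-1}\colon \Sub{Y}\to\Sub{X}$ available in any regular category: passing to the order-duals flips it to $f^{-1}\dashv f[-]\colon \Sub{X}^\op\to\Sub{Y}^\op$, so the right adjoint of the frame homomorphism $f^{-1}$ is precisely $f[-]$, as the statement requires.

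Then I would verify the functor laws. Since the statement records a morphism $f$ by the direct-image map $f[-]$, it suffices to check $(\mathrm{id}_X)[-]=\mathrm{id}_{\Sub{X}^\op}$ and $(g\cdot f)[-]=g[-]\cdot f[-]$. The identity law is immediate, and the composition law follows from uniqueness of right adjoints together with the functoriality of $\sub\colon \K^{\op}\to\slat$: from $(g\cdot f)^{-1}=f^{-1}\cdot g^{-1}$ and the adjunctions $f^{-1}\dashv f[-]$, $g^{-1}\dashv g[-]$ in the dual order, one obtains $f^{-1}\cdot g^{-1}\dashv g[-]\cdot f[-]$, so that $g[-]\cdot f[-]$ is the right adjoint of $(g\cdot f)^{-1}$, namely $(g\cdot f)[-]$.

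The only point demanding care is bookkeeping rather than mathematical difficulty: one must keep straight the two orders $\Sub{X}$ and $\Sub{X}^\op$ and the two equivalent presentations of a localic map---as the frame homomorphism $f^{-1}$ running backwards, or as its right adjoint $f[-]$---and confirm that the resulting variance is that of a functor into $\Loc = \Frm^{\op}$. The one genuinely non-formal ingredient, that $\Sub{X}^\op$ is a frame and that $f^{-1}$ preserves its arbitrary joins, is exactly the coframe corestriction supplied by the filtrality assumption via \cite[Lemma~4.4]{mr}, which I take as given; everything else is a formal consequence of the functoriality of $\sub$ established earlier.
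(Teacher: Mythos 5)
Your proposal is correct and follows essentially the same route as the paper: both invoke \cite[Lemma~4.4]{mr} to get the coframe corestriction of $\sub$, dualise the adjunction $f[-]\dashv f^{-1}$ to exhibit $f[-]$ as the localic map associated with the frame homomorphism $f^{-1}$, and obtain functoriality from that of $\sub$ together with uniqueness of adjoints. The paper presents this as a summary paragraph rather than a displayed proof, but the content is identical.
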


In the remainder of this section, we prove some basic properties of the functor $\K\to \Loc$ just defined, starting with the key fact that it factors through the inclusion $\CHLoc \into \Loc$ (Proposition~\ref{p:functor-to-CHLoc}). This relies on a localic analogue of the topological result stating that the closed quotient of a compact Hausdorff space is compact and Hausdorff. 

\subsection{Closed images of compact Hausdorff locales are compact Hausdorff}
It is well known that the localic image of a compact locale is compact. The next proposition shows that the image of a compact Hausdorff locale under a closed localic map is also Hausdorff.\footnote{As we were not able to find this result in the literature, we offer a proof which essentially amounts to showing that subfitness is preserved under closed localic surjections; classically, the latter fact was observed e.g.\ in \cite[Remark~4.4]{GGK2014}.} 

\begin{proposition}\label{p:closed-image-chaus}
If $f \colon Y \repi X$ is a closed localic surjection and $Y$ is compact Hausdorff, then so is $X$.
\end{proposition}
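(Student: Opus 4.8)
The plan is to reduce everything to the characterisation of compact Hausdorff locales in Proposition~\ref{p:compact-Hausdorff-equiv}: once $X$ is known to be compact, it suffices to prove that $X$ is \emph{normal} and \emph{subfit}. The two workhorses will be the hypotheses on $f$. Surjectivity gives the adjunction identity $f_* f^* = \mathrm{id}_{OX}$ and, as is standard, forces $f_* 0 = 0$ (so $f$ is automatically dense); closedness is the dual Frobenius law~\eqref{eq:closed}. I would first dispatch compactness: if $\bigvee_i u_i = 1$ is a directed join in $OX$, then $\bigvee_i f^* u_i = f^* 1 = 1$ is directed in $OY$, so compactness of $Y$ gives $f^* u_i = 1$ for some $i$, whence $u_i = f_* f^* u_i = 1$.

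Normality transfers directly. Given $a \vee b = 1$ in $OX$, apply $f^*$ to get $f^* a \vee f^* b = 1$, and use normality of $Y$ (which holds by Proposition~\ref{p:compact-Hausdorff-equiv}, as $Y$ is compact Hausdorff) to obtain $u', v' \in OY$ with $f^* a \vee u' = 1$, $f^* b \vee v' = 1$ and $u' \wedge v' = 0$. Setting $u \coloneqq f_* u'$ and $v \coloneqq f_* v'$, the closedness law~\eqref{eq:closed} yields $a \vee u = f_*(f^* a \vee u') = f_* 1 = 1$ and likewise $b \vee v = 1$, while $u \wedge v = f_*(u' \wedge v') = f_* 0 = 0$ since $f_*$ preserves meets and $f$ is dense. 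Hence $X$ is normal.

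The crux is subfitness, which I would establish at the level of nuclei by transporting subfitness of $Y$ through the adjunction $f^- \dashv f_+$ of Section~\ref{s:sublocales}, where $f^- \colon NX \to NY$ and $f_+ \colon NY \to NX$ with $f_+ j = f_* \cdot j \cdot f^*$. Two identities are needed. First, closedness gives $f_+ \mathfrak{c}_t = \mathfrak{c}_{f_* t}$ for every $t \in OY$, by a direct computation from~\eqref{eq:closed}. Second, $f_+ \mathfrak{o}_{f^* u} = \mathfrak{o}_u$ for every $u \in OX$, which unwinds to $f_*(f^* u \to f^* s) = u \to s$ and follows from $f_* f^* = \mathrm{id}$ together with the preservation of binary meets by $f_*$. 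Combining the first identity with the adjunction also yields the equivalence $\mathfrak{o}_{f^* u} \le \mathfrak{c}_t \iff \mathfrak{o}_u \le \mathfrak{c}_{f_* t}$. Now fix $u \in OX$ and set $m \coloneqq \bigwedge\{\mathfrak{c}_v \mid v \in OX,\ \mathfrak{o}_u \le \mathfrak{c}_v\}$; the inequality $\mathfrak{o}_u \le m$ is immediate, so only $m \le \mathfrak{o}_u$ remains. Applying the meet-preserving right adjoint $f_+$ to the subfitness identity $\mathfrak{o}_{f^* u} = \bigwedge\{\mathfrak{c}_t \mid \mathfrak{o}_{f^* u} \le \mathfrak{c}_t\}$ for $Y$ and invoking the two identities gives $\mathfrak{o}_u = \bigwedge\{\mathfrak{c}_{f_* t} \mid \mathfrak{o}_{f^* u} \le \mathfrak{c}_t\}$. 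By the equivalence, each $\mathfrak{c}_{f_* t}$ occurring here is one of the terms defining $m$, so this meet is over a subfamily of the defining family of $m$; hence $m \le \mathfrak{o}_u$, giving $m = \mathfrak{o}_u$ and subfitness of $X$. Finally, compactness, normality and subfitness combine through Proposition~\ref{p:compact-Hausdorff-equiv} to conclude that $X$ is Hausdorff, hence compact Hausdorff.

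I expect the main obstacle to be the nucleus-level bookkeeping in the subfitness step: verifying that $f_+$ sends closed nuclei to closed nuclei via $f_*$ (where closedness of $f$ is genuinely used) and fixes the open nuclei pulled back along $f$ (where surjectivity is used). Only once these are in place does the infinite meet computing subfitness descend correctly from $Y$ to $X$ through the right adjoint $f_+$.
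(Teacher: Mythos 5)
Your proposal is correct and follows essentially the same route as the paper: reduce to compactness, normality and subfitness via Proposition~\ref{p:compact-Hausdorff-equiv}, and establish subfitness by pushing the meet of closed nuclei through $f_+$, using $f_+\mathfrak{c}_t=\mathfrak{c}_{f_*t}$ (closedness) and $f_+f^-\mathfrak{o}_u=\mathfrak{o}_u$ (surjectivity, which you phrase via $f_*f^*=\mathrm{id}$ where the paper uses order-reflection of $f^*$ --- the same fact). The only difference is that you prove the preservation of compactness and normality directly, where the paper cites the literature; both arguments are sound.
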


\begin{proof}
The localic image of a compact locale is compact, see e.g.\ \cite[VII.1.3]{pp}, hence $X$ is compact. By Proposition~\ref{p:compact-Hausdorff-equiv}, it suffices to prove that $X$ is normal and subfit whenever $Y$ is. It is well known that normality is preserved under closed localic surjections, see e.g.\ \cite[VII.1.6]{pp2}, hence it remains to show that $X$ is subfit whenever $Y$ is.

Let $j = \mathfrak{o}_u$ correspond to an open sublocale $U$ of $X$. Then the inverse image of $U$ under $f$, as a sublocale of $Y$, corresponds to the nucleus $f^- j = \mathfrak{o}_{f^* u}$. Since $Y$ is subfit, there is a set $\{v_i\mid i\in I\}\subseteq OY$ such that $\mathfrak{o}_{f^* u} = \bigwedge_{i\in I}{\mathfrak{c}_{v_i}}$ in the frame of nuclei on $OY$, i.e.\ the inverse image of $U$ is a join of closed sublocales of $Y$. Hence we have 
\[
f_+ f^- j = f_+ (\bigwedge_{i\in I}{\mathfrak{c}_{v_i}}) = \bigwedge_{i\in I} f_+ \mathfrak{c}_{v_i}.
\]
As $f$ is a closed localic map, $f_+ \mathfrak{c}_{v_i}=\mathfrak{c}_{f_* v_i}$ for all $i\in I$. Thus, the previous equation exhibits the sublocale of $X$ corresponding to $f_+ f^- j$ as a join of closed sublocales. We claim that $j = f_+ f^- j$. The inequality $j \leq f_+ f^- j$ holds for all nuclei (not just the open ones). On the other hand, for all $w\in OX$ we have
\begin{align*}
f_+ f^- j(w) &\leq j(w)  \\
\Longleftrightarrow \ f_* (f^* u \to f^*w) &\leq u \to w \\
\Longleftrightarrow \ f_* (f^* u \to f^*w) \wedge u &\leq w \\
\Longleftrightarrow \ f^*f_* (f^* u \to f^*w) \wedge f^*u &\leq f^*w \\
\Longleftrightarrow \ f^*f_* (f^* u \to f^*w) &\leq  f^*u \to f^*w,
\end{align*}
which is always satisfied; note that the third equivalence holds because $f$ is a localic surjection and so the corresponding frame homomorphism $f^*$ reflects the order.

This shows that the open sublocale $U$ corresponding to the nucleus $j$ is a join of closed sublocales, and so $X$ is subfit.
\end{proof}

The following consequence of Proposition~\ref{p:closed-image-chaus}, although not needed in the present work, may be of independent interest. Recall that a localic map $Z\to X$ is \emph{separated} if the restricted diagonal $Z \to Z\times _X Z$ is closed.
\begin{corollary}
Consider a commutative triangle
\[\begin{tikzcd}
Z \arrow{rr}{f} \arrow{dr}[swap]{h} & & Y \arrow{dl}{g} \\
{} & X & {}
\end{tikzcd}\]
in the category of locales, with $f$ a closed localic surjection. If $h$ is proper and separated, then $g$ is also proper and separated. 
\end{corollary}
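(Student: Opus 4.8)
The plan is to deduce the statement as the relativisation of Proposition~\ref{p:closed-image-chaus} over the base $X$, read in the internal logic of the topos $\Sh(X)$ of sheaves on the locale $X$. I would first invoke the equivalence between the slice category $\Loc/X$ and the category of internal locales in $\Sh(X)$ (Joyal--Tierney; see also \cite{Elephant2}), under which an object $g\colon Y\to X$ is sent to an internal locale $\underline Y$, and the relevant classes of maps translate into internal properties: $g$ is \emph{proper} precisely when $\underline Y$ is internally compact, and \emph{separated} precisely when $\underline Y$ is internally Hausdorff (cf.\ \cite{v} for the former). Moreover, a morphism $f\colon (Z,h)\to (Y,g)$ of $\Loc/X$ corresponds to an internal localic map $\underline f\colon \underline Z\to \underline Y$, and $f$ is a closed localic surjection if and only if $\underline f$ is internally a closed surjection.

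With this dictionary in place the argument is short. The commuting triangle exhibits $f$ as a morphism $(Z,h)\to (Y,g)$ in $\Loc/X$, hence as an internal localic map $\underline f\colon \underline Z\to \underline Y$, and since $f$ is a closed localic surjection, $\underline f$ is an internal closed surjection. The hypotheses that $h$ is proper and separated say precisely that $\underline Z$ is internally compact and Hausdorff. As Proposition~\ref{p:closed-image-chaus} was established in the internal logic of an arbitrary topos, I would apply it inside $\Sh(X)$ to the internal closed surjection $\underline f$ with compact Hausdorff domain $\underline Z$, concluding that its codomain $\underline Y$ is internally compact and Hausdorff. Translating back along the equivalence, $g$ is proper and separated, as required.

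The step I expect to be the main obstacle is justifying the dictionary itself, and specifically the last clause: that external closedness and surjectivity of $f$ coincide with their internal counterparts in $\Sh(X)$. For surjections this is standard, and the correspondence between proper maps and internal compactness is due to Vermeulen; the delicate entry is closedness, since closed maps are not pullback-stable. This is nonetheless consistent with the internal reading, as closedness is not a geometric property and so need not be preserved by inverse image functors; concretely, one checks that the dual Frobenius law~\eqref{eq:closed} for $f$ is equivalent to its internal validity for $\underline f$. Finally, I would remark that the properness half of the conclusion also admits a direct verification avoiding the internal machinery: since $f$ is a surjection we have $f_*f^* = \mathrm{id}_{OY}$, so for directed $\{a_i\}\subseteq OY$ the family $\{f^* a_i\}$ is directed with $\bigvee a_i = f_*(\bigvee f^* a_i)$, whence $g_*(\bigvee a_i)=h_*(\bigvee f^* a_i)=\bigvee h_*(f^* a_i)=\bigvee g_* a_i$ using that $h_* = g_* f_*$ preserves directed suprema; the dual Frobenius law for $g$ then follows from that for $h$ and $f$ together with the surjectivity of $f_*$.
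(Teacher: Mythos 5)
Your proposal is correct and follows essentially the same route as the paper: pass to internal locales in $\Sh(X)$ via the equivalence $\Loc/X \simeq \Loc(\Sh(X))$, check that $f$ becomes an internal closed surjection (surjectivity via epimorphisms in the slice, closedness via a componentwise verification of the dual Frobenius law), identify proper separated maps with internal compact Hausdorff locales, and apply Proposition~\ref{p:closed-image-chaus} internally. The only (welcome but inessential) addition is your direct external verification of the properness half of the conclusion.
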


\begin{proof} 
We apply Proposition~\ref{p:closed-image-chaus} internally in the category $\Loc(\Sh(X))$ of locales in the sheaf topos $\Sh(X)$, using the equivalence 
\[\begin{tikzcd}
\Int\colon \Loc/{X}\arrow{r}{\simeq} & \Loc(\Sh(X)).
\end{tikzcd}\]
See e.g.\ \cite[Theorem~C1.6.3]{Elephant2}. Under this equivalence, a localic map $\ell\colon W\to X$ corresponds to the internal locale $\Int(\ell)$ in $\Sh(X)$ (called its \emph{internalization}) whose corresponding internal frame is given, as a sheaf, by
\[
\forall u\in OX, \ \ O\Int(\ell)(u)\coloneqq \{ v \in OW \mid v \leq \ell^* u \}.
\]
Further, a morphism $f$ in $\Loc/{X}$, from $h\colon Z\to X$ to $g\colon Y\to X$, is taken to the natural transformation $\Int (f)$ such that the component of the inverse image $\Int (f)^{*}$ at each $u \in OX$ is the appropriate restriction of $f^*$. On the other hand, the direct image $\Int (f)_*$ is given, for each $u \in OX$, by $\Int(f)_*(u)(x) = f_*x  \wedge g^* u$ where
\[
\Int (f)_*(u) \colon \Int (h)(u) = \{z \in OZ \mid  z \leq h^*u \} \to \Int (g)(u) = \{y \in OZ \mid y \leq g^*u \}.
\]

Suppose that $f$ is a closed localic surjection. 
We claim that the internal localic map $\Int(f)$ in $\Sh(X)$ is also a closed surjection.
Recall that localic surjections are precisely the epimorphisms in $\Loc$, and the forgetful functor $\Loc/{X} \to \Loc$ (preserves and) reflects colimits. Hence, $f$ is an epimorphism in $\Loc/{X}$. 
It follows that $\Int(f)$ is an epimorphism in $\Loc(\Sh(X))$, i.e.\ an internal localic surjection in $\Sh(X)$. As $f$ is closed, each component of the natural transformation $\Int(f)$ satisfies the dual Frobenius law; just observe that 
\begin{align*}
\Int(f)_*(u) (x \vee f^* y) &= f_* (x \vee f^* y) \wedge g^* u \\
&= (f_* x  \vee y) \wedge g^* u \tag*{$f$ is closed} \\ 
&= (f_* x  \wedge g^* u ) \vee (y \wedge g^* u )\\
&= \Int(f)_*(u) (x) \vee y. \tag*{$y \leq g^*u$}
\end{align*}
Since the dual Frobenius law is finitary equational, and the evaluation functors 
\[
\mathrm{ev}_u\colon \Sh(X)\to \Set
\] 
for $u\in OX$ are collectively faithful, $\Int(f)$ satisfies the dual Frobenius law as well. So,~$\Int(f)$ is a closed localic surjection internally in $\Sh(X)$.

Now, suppose that $h$ is proper and separated. A localic map $\ell\colon W\to X$ is proper and separated precisely when $\Int(\ell)$ is a compact Hausdorff internal locale in $\Sh(X)$, cf.\ e.g.\ \cite[p.~634]{Elephant2}. Therefore, applying Proposition~\ref{p:closed-image-chaus} to the internal localic map $\Int(f)$, we see that $g$ is also proper and separated. 
\end{proof}

\subsection{The functor \texorpdfstring{$\F\colon \K\to \CHLoc$}{S: K --> CHLoc}}

As promised, we show that the functor ${\K\to \Loc}$, defined as in Lemma~\ref{l:2}  for any filtral category $\K$, factors through the inclusion functor $\CHLoc \into \Loc$.
To avoid confusion, throughout we shall denote by $\sleq$ the order on~$\Sub{X}$
dual to~$\leq$. That is, for all $S,T\in\Sub{X}$,
\[
  S\leq T \ \text{in} \ \Sub{X} \ \Longleftrightarrow \ T \sleq S \
  \text{in} \ \Sub{X}^\op.
\]
Further, we shall denote infima and suprema in $\Sub{X}^\op$ by $\swedge$ and $\svee$, respectively, to distinguish them from infima and suprema in $\Sub{X}$.

\begin{proposition}\label{p:functor-to-CHLoc}
  Let $\K$ be a filtral category. The functor $\K\to \Loc$ in Lemma~\ref{l:2} corestricts to a functor $\F\colon \K\to \CHLoc$ into the category of compact Hausdorff locales. Moreover, $\F$ preserves monomorphisms and regular epimorphisms.
\end{proposition}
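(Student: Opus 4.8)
The plan is to establish the three assertions of Proposition~\ref{p:functor-to-CHLoc} in turn: that $\F$ lands in $\CHLoc$, that it preserves monomorphisms, and that it preserves regular epimorphisms. The central difficulty is the first claim, namely that for every object $X\in\K$ the locale $\Sub{X}^\op$ is compact Hausdorff. For a \emph{filtral} object $S$ this is immediate, since by definition $\Sub{S}^\op$ is a Stone locale (cf.\ Section~\ref{s:frames-of-ideals}), and Stone locales are compact Hausdorff. The key move is then to transport this property along a covering: by filtrality every $X\in\K$ admits a regular epimorphism $q\colon S\repi X$ with $S$ filtral, and applying the functor of Lemma~\ref{l:2} yields a localic map $\F q = q[-]\colon \Sub{S}^\op\to\Sub{X}^\op$. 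I would argue that this localic map is a \emph{closed localic surjection}, so that Proposition~\ref{p:closed-image-chaus} applies and forces $\Sub{X}^\op$, being the image of the compact Hausdorff locale $\Sub{S}^\op$, to be compact Hausdorff as well.

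First I would verify surjectivity of $\F q$. Since $q$ is a regular epimorphism in a coherent (hence regular) category, Lemma~\ref{l:reg-epi-1-1}\ref{item:2} gives that the image map $q[-]\colon\Sub{S}\to\Sub{X}$ is surjective; dually, the right adjoint in the pair $q^{-1}\dashv q[-]\colon\Sub{S}^\op\to\Sub{X}^\op$ is surjective, which is exactly the condition for the localic map $\F q$ to be a localic surjection. Next I would check closedness, i.e.\ the dual Frobenius law~\eqref{eq:closed} for $\F q$. Unwinding the identification of $\F q$ with $q[-]$ and of its adjoint $(\F q)^{*}$ with $q^{-1}$, the required identity becomes a purely order-theoretic statement in the distributive lattices $\Sub{S}$ and $\Sub{X}$ relating images, preimages, and joins (equivalently, meets in the dual order). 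This is where I expect the main technical content to sit: the Frobenius identity for images along regular epimorphisms in a coherent category. I anticipate proving it using the pullback-stability of images together with the fact that $q^{-1}$ is a lattice homomorphism and $q[-]$ its left adjoint; the key input is that $q$, being a regular epimorphism, satisfies $q[q^{-1}(T)\vee S'] = T\vee q[S']$ in $\Sub{X}$, which dualises precisely to the dual Frobenius law.

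Having established that $\F X = \Sub{X}^\op\in\CHLoc$ for all $X$, and that $\F q$ is a morphism of $\CHLoc$ for the covering maps, functoriality from Lemma~\ref{l:2} ensures $\F$ is a well-defined functor $\K\to\CHLoc$. For preservation of monomorphisms and regular epimorphisms I would invoke the characterisations in Lemma~\ref{l:CHLoc-monos-repis} together with Lemma~\ref{l:reg-epi-1-1}. If $m\colon T\mono X$ is a monomorphism in $\K$, then by Lemma~\ref{l:reg-epi-1-1}\ref{item:1} the image map $m[-]$ is injective, so the localic map $\F m = m[-]$ has injective right adjoint; dually its frame homomorphism $(\F m)^{*}=m^{-1}$ is surjective, whence $\F m$ is a localic injection and, by Lemma~\ref{l:CHLoc-monos-repis}\ref{i:monos-chloc}, a monomorphism in $\CHLoc$. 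Symmetrically, if $e$ is a regular epimorphism in $\K$, then $e[-]$ is surjective by Lemma~\ref{l:reg-epi-1-1}\ref{item:2}, so $\F e$ is a localic surjection, hence a regular epimorphism in $\CHLoc$ by Lemma~\ref{l:CHLoc-monos-repis}\ref{i:repis-chloc}. The only subtlety I foresee here is bookkeeping the order-reversal carefully, ensuring that ``image map injective'' translates to ``frame homomorphism surjective'' rather than the reverse; this is precisely the dictionary between localic injections/surjections and their adjoints recorded in Section~\ref{sec:locales}.
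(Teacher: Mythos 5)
Your overall architecture coincides with the paper's: reduce to a filtral cover $q\colon S\repi X$, note that $\Sub{S}^\op$ is a Stone (hence compact Hausdorff) locale, show that $\F q$ is a closed localic surjection, and invoke Proposition~\ref{p:closed-image-chaus}; your surjectivity argument and your treatment of monomorphisms and regular epimorphisms are correct and essentially identical to the paper's.

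The one genuine problem is your identification of the dual Frobenius law. The frames underlying $\F(S)$ and $\F(X)$ are $\Sub{S}^\op$ and $\Sub{X}^\op$, so the join $\vee$ occurring in eq.~\eqref{eq:closed} is the join $\svee$ of the \emph{dual} order, i.e.\ the meet (intersection) of subobjects. Unwinding $(\F q)_*=q[-]$ and $(\F q)^*=q^{-1}$, the condition to be verified is therefore
\[
q[U\wedge q^{-1}(V)]=q[U]\wedge V \quad\text{in }\Sub{X},
\]
which is the standard Frobenius reciprocity, valid for \emph{every} morphism of a regular category (see \cite[Lemma~A1.3.3]{el1}); in particular, closedness of $\F f$ holds for all $f$ and requires no appeal to $q$ being a regular epimorphism. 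The identity you propose as the key input, $q[q^{-1}(T)\vee S']=T\vee q[S']$ with $\vee$ the join in $\Sub{X}$, is indeed true for regular epimorphisms (it follows from $q[-]$ preserving joins and $q[q^{-1}(T)]=T$), but under the order-duality it becomes $f_*(u\swedge f^*v)=f_*u\swedge v$, a statement about \emph{meets} in the frames $\Sub{S}^\op$ and $\Sub{X}^\op$, not eq.~\eqref{eq:closed}; it therefore does not yield closedness. This is precisely the order-reversal bookkeeping you flag as the only subtlety in the monomorphism/epimorphism part (where you do carry it out correctly). The repair is simply to replace your key input by the meet-form Frobenius law displayed above.
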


\begin{proof}
Recall that the image of a morphism $f\colon X\to Y$ in $\K$ under the functor $\K\to \Loc$ in Lemma~\ref{l:2} is the localic map
\[
f[-] \colon \Sub{X}^\op \to \Sub{Y}^\op
\]
whose left adjoint is the frame homomorphism $f^{-1}\colon \Sub{Y}^\op \to \Sub{X}^\op$.
The localic map $f[-]$ is closed because, in any regular category, the Frobenius law 
\[
f[U \wedge f^{-1} (V)]= f[U] \wedge V
\] 
holds at the level of subobject lattices, see e.g.\ \cite[Lemma A1.3.3]{el1}, and thus for the order-dual lattices of subobjects we have $f[U \svee f^{-1}(V)]=f[U] \svee V$.

We claim that, for each object $X$ of $\K$, the locale $\Sub{X}^\op$ is compact Hausdorff. Since~$\K$ is filtral, there exists a regular epimorphism $e \colon S \repi X$ with $S$ filtral. 
The localic map $e[-]\colon \Sub{S}^\op \to \Sub{X}^\op$ is closed by the first part of the proof and is a localic surjection by Lemma~\ref{l:reg-epi-1-1}\ref{item:2}. Now, $\Sub{S}^\op$ is a Stone locale (in particular, compact Hausdorff) because $S$ is filtral, hence the locale $\Sub{X}^\op$ is compact Hausdorff by Proposition~\ref{p:closed-image-chaus}.

This shows that the functor $\K\to \Loc$ corestricts to a functor $\K\to \CHLoc$. The fact that the latter preserves monomorphisms and regular epimorphisms is an immediate consequence of Lemmas~\ref{l:CHLoc-monos-repis} and~\ref{l:reg-epi-1-1}.
\end{proof}

The remainder of this paper is devoted to the study of the functor $\F\colon \K\to \CHLoc$, for $\K$ a filtral category possibly satisfying some extra conditions. Throughout, if $X$ is an object of $\K$, the notation $\Sub{X}^\op$ will refer to the frame of subobjects of $X$ partially ordered by $\sleq$. The locale corresponding to $\Sub{X}^\op$ will always be denoted by $\F(X)$.

We conclude this section by showing that the functor $\F$ is bijective on subobjects. In view of Proposition~\ref{p:functor-to-CHLoc} (more precisely, of the fact that $\F$ preserves monomorphisms), for every object $X$ of a filtral category $\K$ there is a well-defined map
\begin{align}\label{eq:subobj-comparison}
\Sub{X} &\to \Sub{\F(X)}, \\
(m\colon Y\mono X) &\mapsto (\F(m) = m[-]\colon \F(Y)\mono\F(X)) \nonumber
\end{align}
which is clearly monotone.

\begin{lemma}\label{l:full-on-subobjects}
For every object $X$ of a filtral category $\K$, the map in~\eqref{eq:subobj-comparison} is an order isomorphism. In particular, the functor $\F \colon \K \to \CHLoc$ is bijective on subobjects.
\end{lemma}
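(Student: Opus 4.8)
The plan is to compute explicitly, for each subobject $m\colon Y\mono X$ in $\K$, the element of the frame $O\F(X)=\Sub{X}^\op$ to which the closed sublocale $\F(m)$ corresponds, and then to recognise the composite of~\eqref{eq:subobj-comparison} with this identification as the canonical order-reversing bijection $\Sub{X}\to\Sub{X}^\op$.

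First I would set up the target side. By Example~\ref{ex:subobjects-CHLoc} the subobjects of $\F(X)$ in $\CHLoc$ are its closed sublocales, and these correspond order-reversingly to the closed nuclei on $O\F(X)$; since $v\mapsto\mathfrak{c}_v$ is an order isomorphism from $O\F(X)=\Sub{X}^\op$ onto the frame of closed nuclei, there results an order-reversing bijection
\[
\Sub{\F(X)}\to\Sub{X}^\op
\]
sending a closed sublocale with nucleus $\mathfrak{c}_v$ to $v$. The map~\eqref{eq:subobj-comparison} is itself well defined because $\F$ preserves monomorphisms (Proposition~\ref{p:functor-to-CHLoc}), so $\F(m)$ is a monomorphism in $\CHLoc$, hence a localic injection by Lemma~\ref{l:CHLoc-monos-repis}\ref{i:monos-chloc}, representing a closed sublocale of $\F(X)$.

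Next I would identify the nucleus associated with $\F(m)$. By construction the frame homomorphism underlying the localic map $\F(m)\colon\F(Y)\to\F(X)$ is $m^{-1}\colon\Sub{X}^\op\to\Sub{Y}^\op$, and its right adjoint is $m[-]$, so the corresponding nucleus on $O\F(X)=\Sub{X}^\op$ is $m[-]\cdot m^{-1}$. Writing $M\in\Sub{X}$ for the subobject represented by $m$ and applying the Frobenius law $f[U\wedge f^{-1}(V)]=f[U]\wedge V$ valid in any regular category, with $f=m$, $U$ the top subobject of $Y$ and $V=W$, together with $m[U]=M$ since $m$ is monic, I obtain $m[m^{-1}(W)]=M\wedge W$ for every $W\in\Sub{X}$. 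Passing to the dual order this reads $m[-]\cdot m^{-1}(W)=M\svee W$; that is, the nucleus associated with $\F(m)$ is precisely the closed nucleus $\mathfrak{c}_M$, with $M$ viewed as an element of $\Sub{X}^\op$.

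This computation is the crux of the argument; once it is in place the conclusion is formal. The order-reversing bijection $\Sub{\F(X)}\to\Sub{X}^\op$ sends $\F(m)$ to $M$, so its composite with~\eqref{eq:subobj-comparison} is the underlying set-identity $\Sub{X}\to\Sub{X}^\op$, which is the canonical order-reversing bijection between a poset and its dual. Since both the composite and the second factor are order-reversing bijections, the map~\eqref{eq:subobj-comparison} is an order isomorphism. As~\eqref{eq:subobj-comparison} is exactly the comparison map~\eqref{eq:full-bij-on-subs} for the functor $\F$, this shows at once that $\F$ is bijective on subobjects. The only delicate point is the bookkeeping of the various order-duals, which must be tracked carefully throughout; the rest is a direct application of the Frobenius law already recorded in the proof of Proposition~\ref{p:functor-to-CHLoc}.
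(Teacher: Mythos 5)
Your proof is correct and follows essentially the same route as the paper: both arguments identify the closed nucleus corresponding to $\F(m)$ as $\mathfrak{c}_M$ (you via the Frobenius law giving $m[m^{-1}(W)]=M\wedge W$, the paper by describing $\F(m)^*$ and $\F(m)_*$ directly) and then conclude that the comparison map is an order isomorphism. The only cosmetic difference is that you package the final step as a composite of order anti-isomorphisms, where the paper checks surjectivity and order-reflection (by evaluating nuclei at $0$) separately.
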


\begin{proof}
Recall that $\Sub{\F(X)}$ is order-dual to the lattice of closed nuclei on $\Sub{X}^\op$.

Now, denote by $\alpha$ the map in eq.~\eqref{eq:subobj-comparison} and consider a subobject $m\colon Y\mono X$. The frame homomorphism $\alpha(m)^*\colon \Sub{X}^\op\to\Sub{Y}^\op$ corresponding to the localic injection $\alpha(m)$ sends $S$ to $Y\svee S$, and its right adjoint $\alpha(m)_*$ is the inclusion map. Therefore, the sublocale of $\F(X)$ represented by $\alpha(m)$ corresponds to the closed nucleus $\mathfrak{c}_Y$ on the frame $\Sub{X}^\op$. In particular, it follows that $\alpha$ is surjective. To conclude that $\alpha$ is an order isomorphism it suffices to show that it reflects the order, i.e.\ that $\mathfrak{c}_{Y_2}\leq \mathfrak{c}_{Y_1}$ entails $Y_1\leq Y_2$ in $\Sub{X}$. In turn, this follows at once by evaluating the nuclei at $0$:
\[
Y_2 = \mathfrak{c}_{Y_2}(0) \sleq \mathfrak{c}_{Y_1}(0) = Y_1. \qedhere
\]
\end{proof}

\section{Faithfulness and preservation of finite limits}\label{sec:lim}
Let $\K$ be a filtral category. Our aim in this section consists in identifying conditions on~$\K$ ensuring that the functor $\F\colon \K\to\CHLoc$ is Cartesian; equivalently, that it preserves equalisers, binary products, and the terminal object. 

\subsection{Enough subobjects and preservation of equalisers} 
To start with, we shall postulate that $\F$ is faithful; this can be stated as a first-order axiom in the language of categories in the following way:
\begin{equation}\label{eq:enough-subobjects}
  \forall f,g \colon X \to Y \; ((\forall i \colon U \mono X, \; f [U]=g[U]) \Rightarrow f=g)).
\end{equation}

\begin{definition}
A regular category \emph{has enough subobjects} if it satisfies condition~\eqref{eq:enough-subobjects}.  
\end{definition}

\begin{example}
The category $\CHLoc$ has enough subobjects. Just observe that, for every localic map $f\colon X\to Y$ between compact Hausdorff locales, since a subobject $U$ of $X$ can be identified with a closed sublocale of $X$, which in turn corresponds to a nucleus $\mathfrak{c}_u$ for some $u\in OX$, the image $f[U]$ corresponds to the nucleus $f_+ \mathfrak{c}_{u}$. As $f$ is closed, the latter nucleus coincides with $\mathfrak{c}_{f_* u}$. Thus, given another localic map $g\colon X\to Y$, $f[U] = g[U]$ implies $\mathfrak{c}_{f_* u} = \mathfrak{c}_{g_* u}$, which in turn entails $f_* u = g_* u$. It follows that, if $f[U] = g[U]$ for all subobjects $U$ of $X$, then $f=g$.
\end{example}

The following easy observation will imply that $\F$ preserves equalisers:

\begin{lemma}\label{l:generic-lemma-equalisers}
Let $\cat{C},\cat{D}$ be categories with equalisers and let $F\colon \cat{C}\to\cat{D}$ be a faithful functor that preserves monomorphisms and such that, for all $a\in\cat{C}$, the canonical map $\Sub{a}\to \Sub{Fa}$ is an order isomorphism. Then $F$ preserves equalisers.
\end{lemma}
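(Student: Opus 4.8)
The plan is to fix an arbitrary parallel pair $f,g\colon a\to b$ in $\cat{C}$, let $e\colon E\mono a$ be its equaliser, and show that $Fe\colon FE\to Fa$ is the equaliser of $Ff,Fg$ in $\cat{D}$. Write $k\colon K\mono Fa$ for the latter equaliser, which exists because $\cat{D}$ has equalisers and is a monomorphism. The guiding observation is that both $Fe$ and $k$ determine subobjects of $Fa$, and it suffices to prove that they determine the \emph{same} subobject: since an equaliser is determined, up to isomorphism, by its universal property, any monomorphism representing the same subobject of $Fa$ as $k$ is itself an equaliser of $Ff,Fg$.

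First I would use surjectivity of the canonical map $\Sub{a}\to\Sub{Fa}$ to choose a subobject $m\colon M\mono a$ in $\cat{C}$ whose image $Fm$ represents the same subobject of $Fa$ as $k$. I would then prove $E=M$ in $\Sub{a}$ by establishing two inequalities. For $M\leq E$: since $Fm$ represents the same subobject as $k$ and $k$ equalises $Ff,Fg$, also $Fm$ equalises them, so $F(f\cdot m)=F(g\cdot m)$; faithfulness of $F$ then yields $f\cdot m=g\cdot m$, whence $m$ factors through the equaliser $e$, i.e.\ $M\leq E$. For $E\leq M$: applying $F$ to the identity $f\cdot e=g\cdot e$ shows that $Fe$ equalises $Ff,Fg$, so $Fe$ factors through $k$; thus $FE\leq K=FM$ in $\Sub{Fa}$, and since the canonical map reflects the order (being an order isomorphism), $E\leq M$ in $\Sub{a}$.

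Combining the two inequalities gives $E=M$ as subobjects of $a$, hence $Fe$ and $k$ represent the same subobject of $Fa$. By the guiding observation above, $Fe$ is therefore an equaliser of $Ff,Fg$, so $F$ preserves the chosen equaliser; as the pair was arbitrary, $F$ preserves equalisers.

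The routine verifications are entirely absorbed into the two short factorisation arguments, so there is no serious computational obstacle. The only point requiring care is the conceptual step that representing the same subobject as $k$ already forces a monomorphism to be an equaliser of $Ff,Fg$; this rests on the standard facts that equalisers are monomorphisms and are unique up to a canonical isomorphism compatible with the structure maps. The three hypotheses are exactly what is needed to pass between $\Sub{a}$ and $\Sub{Fa}$: surjectivity supplies $M$, order reflection transports the inequality $FE\leq FM$ back to $E\leq M$, and faithfulness enters precisely once, in deducing $f\cdot m=g\cdot m$ from the equality of their $F$-images.
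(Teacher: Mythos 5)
Your proposal is correct and follows essentially the same route as the paper's proof: take the equaliser $k$ of $Ff,Fg$ in $\cat{D}$, use surjectivity of $\Sub{a}\to\Sub{Fa}$ to lift it to a subobject of $a$, and derive the two inequalities from faithfulness (via the universal property of the equaliser in $\cat{C}$) and from order reflection (via the universal property of $k$). The only cosmetic difference is that the paper chooses the lift $j$ already satisfying $i\leq j$, whereas you recover that inequality separately afterwards.
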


\begin{proof}
Consider an equaliser diagram in $\cat{C}$ as displayed below
\[\begin{tikzcd}
e \arrow[rightarrowtail]{r}{i} & a \arrow[yshift=3pt]{r}{f} \arrow[yshift=-3pt]{r}[swap]{g} & b
\end{tikzcd}\]
and let
\[\begin{tikzcd}
q \arrow[rightarrowtail]{r}{k} & Fa \arrow[yshift=3pt]{r}{Ff} \arrow[yshift=-3pt]{r}[swap]{Fg} & Fb
\end{tikzcd}\]
be an equaliser of $Ff$ and $Fg$ in $\cat{D}$. As $Fi$ equalises $Ff$ and $Fg$, the universal property of $k$ entails that $Fi \leq k$ in $\Sub{Fa}$. Since $\Sub{a}\to \Sub{Fa}$ is an order isomorphism, there exists a subobject $j\colon d\mono a$ such that $i\leq j$ in $\Sub{a}$ and $Fj=k$ in $\Sub{Fa}$. But then $Ff \cdot Fj = Fg\cdot Fj$ implies $f\cdot j = g \cdot j$ by faithfulness of $F$, and so by the universal property of $i$ we have $j \leq i$. Thus $i=j$ in $\Sub{a}$. We conclude that $Fi = k$ in $\Sub{Fa}$, showing that $Fi$ is an equaliser of $Ff$ and $Fg$. 
\end{proof}

We therefore deduce the next proposition from Lemma~\ref{l:generic-lemma-equalisers}, combined with Proposition~\ref{p:functor-to-CHLoc} and Lemma~\ref{l:full-on-subobjects}.

\begin{proposition}\label{p:preservation-of-equalizers} 
If $\K$ is a filtral category with enough subobjects, the functor $\F \colon \K \to \CHLoc$ preserves equalisers.
\end{proposition}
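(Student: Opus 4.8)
The plan is to derive the statement directly from Lemma~\ref{l:generic-lemma-equalisers}, instantiated at $\cat{C} = \K$, $\cat{D} = \CHLoc$ and $F = \F$. The whole argument then amounts to verifying the four hypotheses of that lemma for the functor $\F$, most of which are already available from results established earlier in this section.

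Three of the hypotheses are immediate. Since $\K$ is filtral it is in particular coherent, hence regular, so it has all finite limits and in particular equalisers; and $\CHLoc$ is closed under limits in $\Loc$ (see Section~\ref{s:frames-of-ideals}), so it too has equalisers. By Proposition~\ref{p:functor-to-CHLoc} the functor $\F$ preserves monomorphisms, and by Lemma~\ref{l:full-on-subobjects} the canonical map $\Sub{X}\to\Sub{\F(X)}$ of~\eqref{eq:subobj-comparison} is an order isomorphism for every object $X$ of $\K$.

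The only remaining hypothesis is faithfulness of $\F$, and this is exactly what the enough-subobjects assumption supplies. Here the one point requiring care is to unwind the definition of $\F$ on morphisms: by Lemma~\ref{l:2}, $\F(f)$ is the localic map $f[-]$, and equality of two such localic maps (i.e.\ equality of the corresponding frame homomorphisms, equivalently of their right adjoints) is the same as equality of the monotone maps $f[-]$ and $g[-]\colon \Sub{X}^\op\to\Sub{Y}^\op$. Consequently, for parallel morphisms $f,g\colon X\to Y$ we have $\F(f)=\F(g)$ if, and only if, $f[U]=g[U]$ for every subobject $U\mono X$. Condition~\eqref{eq:enough-subobjects} asserts precisely that this forces $f=g$, so $\F$ is faithful. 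With all four hypotheses verified, Lemma~\ref{l:generic-lemma-equalisers} yields that $\F$ preserves equalisers. I do not anticipate any genuine obstacle: the content of the proposition is carried entirely by Lemma~\ref{l:generic-lemma-equalisers}, the substantive work having already been done in Proposition~\ref{p:functor-to-CHLoc} and Lemma~\ref{l:full-on-subobjects}, while the enough-subobjects axiom is tailored to be a verbatim restatement of the faithfulness of $\F$.
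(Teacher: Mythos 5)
Your proposal is correct and follows exactly the paper's own route: the paper deduces the proposition from Lemma~\ref{l:generic-lemma-equalisers} together with Proposition~\ref{p:functor-to-CHLoc} and Lemma~\ref{l:full-on-subobjects}, with the enough-subobjects axiom being, by design, a restatement of the faithfulness of $\F$. Your unwinding of why $\F(f)=\F(g)$ is equivalent to $f[U]=g[U]$ for all subobjects $U$ is a welcome extra detail that the paper leaves implicit.
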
 

\subsection{Binary products: compatible filtrality}
We now discuss the preservation of binary products by the functor $\F\colon\K\to\CHLoc$. To this end, we shall assume that~$\K$ satisfies an additional property, namely \emph{compatible filtrality}. In Proposition~\ref{p:one-atom-comp-filtral} below we shall see that this property holds whenever the terminal object of $\K$ has precisely two subobjects (however, as explained in Remark~\ref{rem:existence-K-em}, the latter sufficient criterion is only interesting in the classical setting).

\begin{definition}\label{def:comp-filtrality}
A category $\K$ is said to be \emph{compatibly filtral} if it is filtral and, for all filtral objects $S_1,S_2\in \K$, the canonical homomorphism of Boolean algebras
\begin{equation}\label{eq:BA-hom-inj-filt}
\B(\Sub{S_1}) + \B(\Sub{S_2}) \to \B(\Sub{S_1 \times S_2}) 
\end{equation}
is injective.
\end{definition}

\begin{example}
$\CHLoc$ is compatibly filtral. Let $B_1$ and $B_2$ be Boolean algebras, and let $S_1$ and $S_2$ be the Stone locales corresponding to the frames $\Idl(B_1)$ and $\Idl(B_2)$, respectively. It suffices to show that the canonical Boolean algebra homomorphism
\[
\B(\Sub{S_1}^\op) + \B(\Sub{S_2}^\op) \to \B(\Sub{S_1 \times S_2}^\op)
\]
is injective (cf.\ the proof of Lemma~\ref{l:comp-filtral-rephrasing} below). Since $\Sub{X}^\op\cong OX$ for all compact Hausdorff locales $X$, and the functor of ideals preserves coproducts, the previous homomorphism can be identified with
\[
\B(\Idl(B_1)) + \B(\Idl(B_2)) \to \B(\Idl(B_1 + B_2)).
\]
It is not difficult to see that $\B(\Idl(B))\cong B$ for every Boolean algebra $B$ (this can be regarded as part of the duality between Boolean algebras and Stone locales, cf.\ Remark~\ref{rem:localic-Stone-duality} and also~\cite[\S II.3]{jo}), hence the previous map is an isomorphism of Boolean algebras.
\end{example}

To clarify the relation between compatible filtrality and preservation of products by~$\F$, let us observe the following general fact:
\begin{lemma}\label{flat-functor-lemma}
  Let $F\colon \cat{C}\to \cat{D}$ be a functor with $\cat{C}$ finitely complete and $\cat{D}$ regular. Assume that $F$ preserves equalisers and, for any product diagram
\[\begin{tikzcd}
X & X \times Y \arrow{l}[swap]{\pi_1} \arrow{r}{\pi_2} & Y
\end{tikzcd}\]
in $\cat{C}$, the canonical arrow 
\[
\langle F \pi _1 , F \pi _2 \rangle \colon F(X\times Y) \to FX \times FY
\]
is a regular epimorphism. Then $F$ preserves binary products.
\end{lemma}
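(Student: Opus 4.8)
The plan is to show that the canonical comparison $c \coloneqq \langle F\pi_1, F\pi_2\rangle \colon F(X\times Y)\to FX\times FY$ is an isomorphism. Since $c$ is a regular epimorphism by hypothesis, and a morphism that is simultaneously a regular epimorphism and a monomorphism is necessarily an isomorphism, it suffices to prove that $c$ is a monomorphism. (Note that $F$, preserving equalisers, automatically preserves monomorphisms.)

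First I would record an auxiliary fact: \emph{the comparison maps for pullbacks are regular epimorphisms as well}. Concretely, for morphisms $f\colon A\to C$ and $g\colon B\to C$ in $\cat{C}$, the canonical arrow $F(A\times_C B)\to FA\times_{FC}FB$ is a regular epimorphism. The reason is that $A\times_C B$ is the equaliser of $f\cdot\mathrm{pr}_A,\, g\cdot\mathrm{pr}_B\colon A\times B\to C$; applying $F$ and using that it preserves equalisers identifies $F(A\times_C B)$ with the equaliser of the corresponding pair on $F(A\times B)$, which in turn is the pullback of the product comparison $F(A\times B)\to FA\times FB$ along the monomorphism $FA\times_{FC}FB\mono FA\times FB$. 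As the product comparison is a regular epimorphism by hypothesis and regular epimorphisms are stable under pullback in the regular category $\cat{D}$, the claim follows.

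For monomorphicity of $c$, write $P\coloneqq X\times Y$ and consider a pair $a,b\colon Z\to FP$ with $c\cdot a = c\cdot b$, that is, $F\pi_1\cdot a = F\pi_1\cdot b$ and $F\pi_2\cdot a = F\pi_2\cdot b$; the goal is $a=b$. The key is to treat the two equalities \emph{asymmetrically}. Using the second equality, the pair $(a,b)$ factors through the kernel pair $FP\times_{FY}FP$ of $F\pi_2$; by the auxiliary fact, the comparison $F(P\times_Y P)\repi FP\times_{FY}FP$ is a regular epimorphism, so after pulling back along $(a,b)$ we obtain a cover $\bar Z\repi Z$ and a lift $v\colon \bar Z\to F(P\times_Y P)$ satisfying $F\rho_1'\cdot v = a$ and $F\rho_2'\cdot v = b$ on $\bar Z$, where $\rho_1',\rho_2'\colon P\times_Y P\to P$ denote the two projections. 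Now the first equality says precisely that $v$ equalises $F(\pi_1\rho_1')$ and $F(\pi_1\rho_2')$. Since $F$ preserves the equaliser of $\pi_1\rho_1',\,\pi_1\rho_2'\colon P\times_Y P\to X$, and because $\pi_1,\pi_2$ are jointly monic this equaliser is the diagonal $\Delta P\cong P$, the element $v$ factors through $F(\Delta P)$; but both projections $\rho_1',\rho_2'$ restrict to the identity on the diagonal, so $a$ and $b$ are each equal to this factoring element. Hence $a=b$ on $\bar Z$, and since $\bar Z\repi Z$ is epic, $a=b$.

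The step I expect to be the main obstacle, and the heart of the argument, is precisely this asymmetric split. The naive attempt, lifting $(a,b)$ along the product comparison $F(P\times P)\repi FP\times FP$, is circular: it only reproduces the hypothesis $c\cdot a = c\cdot b$, because that comparison is a mere cover and discards exactly the injectivity one is trying to establish. The resolution is to spend one of the two equalising conditions on the lift into a \emph{pullback} comparison, and reserve the other to force a factorisation through a \emph{preserved equaliser} whose shape (the diagonal) identifies the two projections; verifying that the diagonal really is this equaliser, via joint monicity of the projections, is the decisive point.
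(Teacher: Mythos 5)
Your proof is correct. Both your argument and the paper's reduce to showing that the comparison $\langle F\pi_1,F\pi_2\rangle$ is a monomorphism (hence an isomorphism, being also a regular epimorphism), and both do so by covering a given parallel pair along a regular-epi comparison and then invoking a preserved equaliser; but the decompositions are genuinely different. The paper stays entirely with \emph{product} comparisons: it covers $T$ via $F((X\times Y)\times(X\times Y))\repi F(X\times Y)\times F(X\times Y)$, factors through the two preserved equalisers $E_1$, $E_2$ of $\pi_i p_1,\pi_i p_2$, covers a second time via $F(E_1\times E_2)\repi FE_1\times FE_2$, and finally factors through a third equaliser $E$ that plays the role of the intersection $E_1\wedge E_2$ (i.e.\ the diagonal). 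You instead first establish that \emph{pullback} comparisons $F(A\times_C B)\to FA\times_{FC}FB$ are regular epimorphisms --- a clean consequence of equaliser preservation plus pullback-stability of regular epimorphisms in the regular category $\cat{D}$ --- and then spend the two hypotheses asymmetrically: one to lift into the kernel pair $F(P\times_Y P)$, the other to force a factorisation through the preserved equaliser $F\Delta$ of $F(\pi_1\rho_1'),F(\pi_1\rho_2')$, whose identification with the diagonal via joint monicity of the projections you verify correctly. This buys you a single covering step and a single equaliser in place of the paper's two covers and three equalisers, at the cost of the auxiliary lemma; the paper's route avoids that lemma but pays with a longer diagram chase. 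Both arguments use only the stated hypotheses, so either is a valid proof of the lemma.
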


\begin{proof}
It suffices to show that the canonical morphism $\langle F \pi _1 , F \pi _2 \rangle$ in the statement is monic. To this end, consider a commutative diagram 
\[\begin{tikzcd}[column sep = 4em]
T \arrow[yshift= -3pt]{r}[swap]{v} \arrow[yshift= 3pt]{r}{u} & F(X \times Y) \arrow{r}{\langle F \pi _1 , F \pi _2 \rangle} & FX \times FY.
\end{tikzcd}\] 
We must prove that $u = v$. 
Let 
\[\begin{tikzcd}
X\times Y & (X\times Y) \times (X\times Y) \arrow{l}[swap]{p_1} \arrow{r}{p_2} & X\times Y
\end{tikzcd}\]
be a product diagram in $\cat{C}$ and consider the pullback of $\langle u,v \rangle$ along $\langle Fp_1 , Fp_2\rangle$:
\[\begin{tikzcd}[column sep = 5em]
T' \arrow{r}{w} \arrow[twoheadrightarrow]{d}[swap]{e} \arrow[dr, phantom, "\lrcorner", very near start, xshift = -0.4em, yshift = -0.6em] & F(X\times Y \times X \times Y) \arrow[twoheadrightarrow]{d}{\langle Fp_1 , F p_2 \rangle} \\
T \arrow{r}{\langle u,v \rangle} & F(X\times Y) \times F(X\times Y)
\end{tikzcd}\]
The morphism $\langle Fp_1 , Fp_2\rangle$ is a regular epimorphism by assumption, hence so is $e$.
By the commutativity of the previous square we have $Fp_1 \cdot w = u \cdot e$ and $ Fp_2 \cdot w = v \cdot e, $ thus 
\[ 
F \pi _1 \cdot Fp_1 \cdot w  = F \pi _1 \cdot u \cdot e = F \pi _1 \cdot v \cdot e =  F \pi _1 \cdot Fp_2 \cdot w
\]
and similarly $F \pi _2 \cdot Fp_1 \cdot w   = F \pi _2 \cdot Fp_2 \cdot w$. In other words, $w$ equalises the pairs of arrows $(F(\pi _1 \cdot p_1), F(\pi _1 \cdot p_2))$ and $(F(\pi _2 \cdot p_1), F(\pi _2 \cdot p_2))$.
If $(E_1 , m_1)$ is the equaliser of 
\[
\pi _1 \cdot p_1, \pi _1 \cdot p_2 \colon (X \times Y) \times (X \times Y) \to X\times Y \to X,
\] 
and $(E_2 , m_2)$ is the equaliser of 
\[
\pi _2 \cdot p_1, \pi _2 \cdot p_2 \colon (X \times Y) \times (X \times Y) \to X\times Y \to Y,
\]
since $F$ preserves equalisers there are unique arrows $w_1\colon T'\to FE_1$ and $w_2\colon T'\to FE_2$ satisfying, respectively, $w = Fm_1 \cdot w_1$ and $w = Fm_2 \cdot w_2$. 

Consider the product diagram
\[\begin{tikzcd}
E_1 & E_1 \times E_2 \arrow{l}[swap]{q_1} \arrow{r}{q_2} & E_2
\end{tikzcd}\]
and the pullback of $\langle w_1, w_2 \rangle$ along $\langle Fq_1 , F q_2 \rangle$, as displayed below.
\[\begin{tikzcd}[column sep = 5em]
T'' \arrow{r}{w''} \arrow[twoheadrightarrow]{d}[swap]{e'} \arrow[dr, phantom, "\lrcorner", very near start, yshift = -0.5em] & F(E_1 \times E_2) \arrow[twoheadrightarrow]{d}{\langle Fq_1 , F q_2 \rangle} \\
T' \arrow{r}{\langle w_1, w_2 \rangle} & FE_1 \times FE_2
\end{tikzcd}\]
Since $\langle Fq_1 , F q_2 \rangle$ is a regular epimorphism by assumption, so is $e'$.
Now, let $(E,m)$ be the equaliser of 
\[
m_1 \cdot q_1 , m_2 \cdot q_2 \colon E_1 \times E_2 \to (X\times Y) \times (X \times Y).
\]
We claim that $w''$ equalises the pair $(F(m_1 \cdot q_1), F(m_2 \cdot q_2))$. 
Just observe that
\begin{equation}\label{eq:second-eq}
Fm_1 \cdot Fq_1 \cdot w'' = Fm_1 \cdot w_1 \cdot e' = w\cdot e'
\end{equation}
and, in a similar fashion,
\begin{equation}\label{eq:third-eq}
Fm_2 \cdot Fq_2 \cdot w'' = Fm_2 \cdot w_2 \cdot e' = w\cdot e'.
\end{equation} 
As $F$ preserves equalisers, there is a unique morphism $z \colon T'' \to FE$ such that $w'' = Fm \cdot z$. Therefore, we have
\begin{align*}
u \cdot e \cdot e' &= Fp_1 \cdot w \cdot e' \\
&= Fp_1 \cdot Fm_1 \cdot Fq_1 \cdot w'' \tag*{eq.~\eqref{eq:second-eq}}\\
&= Fp_1 \cdot Fm_1 \cdot Fq_1 \cdot Fm \cdot z.
\end{align*}
A similar argument, using eq.~\eqref{eq:third-eq}, shows that
\[
v\cdot e \cdot e' = Fp_2 \cdot Fm_2 \cdot Fq_2 \cdot Fm \cdot z.
\]
Since the composite $e\cdot e'$ is a (regular) epimorphism, in order to conclude that $u = v$ it is enough to show that $p_1 \cdot m_1 \cdot q_1 \cdot m=  p_2 \cdot m_2 \cdot q_2 \cdot m$. 
In turn, this follows at once by observing that
\[
\pi _1 \cdot p_1 \cdot m_1 \cdot q_1 \cdot m= \pi _1  \cdot p_2 \cdot m_1 \cdot q_1 \cdot m = \pi _1 \cdot p_2 \cdot m_2 \cdot q_2 \cdot m
\]
and
\[
\pi _2 \cdot p_1 \cdot m_1 \cdot q_1 \cdot m = \pi _2 \cdot p_1 \cdot m_2 \cdot q_2 \cdot m = \pi _2 \cdot p_2 \cdot m_2 \cdot q_2 \cdot m. \qedhere
\]
\end{proof} 

\begin{remark}
If the category $\cat{D}$ in Lemma~\ref{flat-functor-lemma} is a pretopos, then the epimorphicity of the canonical morphisms $\langle F \pi _1 , F \pi _2\rangle \colon F(X\times Y) \to FX \times FY$, along with the preservation of equalisers, implies that $F$ is a \emph{flat} functor into a pretopos. In this situation, the fact that $F$ preserves binary products can be deduced from general considerations about such functors, cf.~\cite{kp}. The argument above is an elementary adaptation of the general one. 
\end{remark} 

Thus, by Lemma~\ref{flat-functor-lemma}, in order to prove that $\F$ preserves binary products, we can reduce ourselves to showing that the canonical maps $\F(X \times Y) \to  \F(X) \times \F(Y)$ are localic surjections. In the next lemma, we shall see that compatible filtrality amounts to exactly this condition in the case where $X$ and $Y$ are filtral objects.
\begin{lemma}\label{l:comp-filtral-rephrasing}
The following statements are equivalent for any filtral category $\K$:
\begin{enumerate}[label=\textup{(}\arabic*\textup{)}]
\item $\K$ is compatibly filtral.
\item\label{i:localic-comparison-filt-prod} For any two filtral objects $S_1, S_2\in \K$, the canonical map
\[
p\colon \F(S_1 \times S_2) \to  \F(S_1) \times \F(S_2)
\]
is a localic surjection.
\end{enumerate}
\end{lemma}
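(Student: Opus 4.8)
The plan is to prove the equivalence by translating everything through the ideal-functor description of filtral objects and the identification $\Sub{X}^\op \cong O\F(X)$. Recall that for a filtral object $S$, the frame $O\F(S) = \Sub{S}^\op$ is a Stone frame, isomorphic to $\Idl(\B(\Sub{S}))$, and the subframe of complemented elements $\B(O\F(S))$ is isomorphic to the Boolean center $\B(\Sub{S})$. The key observation is that the canonical Boolean algebra homomorphism in eq.~\eqref{eq:BA-hom-inj-filt} and the inverse-image frame homomorphism $p^*$ underlying the localic map $p$ are tightly linked: the localic surjectivity of $p$ is equivalent to the injectivity of $p^*$, and $p^*$ is (up to the above isomorphisms) the ideal-completion extension of the Boolean algebra coproduct comparison map.

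First I would set up the dictionary. Since $\F(S_1)\times\F(S_2)$ is the product of locales, its frame $O(\F(S_1)\times\F(S_2))$ is the coproduct $O\F(S_1)\oplus O\F(S_2)$ in $\Frm$. Because $O\F(S_i)\cong\Idl(B_i)$ are Stone frames (writing $B_i\coloneqq\B(\Sub{S_i})$), and the ideal functor $\Idl\colon\DLat\to\Frm$ is a left adjoint preserving coproducts, this frame coproduct is isomorphic to $\Idl(B_1+B_2)$, where $B_1+B_2$ is the coproduct of Boolean algebras. On the other side, $O\F(S_1\times S_2)=\Sub{S_1\times S_2}^\op$ is again a Stone frame, so it is $\Idl(\B(\Sub{S_1\times S_2}))$. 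Under these identifications, $p^*$ is the frame homomorphism $\Idl(B_1+B_2)\to\Idl(\B(\Sub{S_1\times S_2}))$ extending, on principal ideals of complemented elements, the canonical Boolean algebra map $B_1+B_2\to\B(\Sub{S_1\times S_2})$ of eq.~\eqref{eq:BA-hom-inj-filt}.

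Next I would use the general fact about Stone frames that a frame homomorphism $\Idl(h)\colon\Idl(B)\to\Idl(C)$ induced by a Boolean algebra homomorphism $h\colon B\to C$ is injective if, and only if, $h$ is injective. The forward direction is immediate from the injective unit $B\into\Idl(B)$; the backward direction uses that an ideal of $B$ is determined by which complemented elements it contains, together with the fact that $\down h[-]$ reflects inclusions precisely when $h$ is injective (an ideal $I$ of $B$ equals $\{b\in B\mid \down h[{\down b}]\subseteq \down h[I]\}$ when $h$ is injective, since $h(b)\le\bigvee h[F]$ for a finite $F\subseteq I$ forces $b\le\bigvee F$ by injectivity on the Boolean algebra, where joins exist because $B$ is Boolean). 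Applying this to $h$ the comparison map $B_1+B_2\to\B(\Sub{S_1\times S_2})$ shows that $p^*$ is injective exactly when eq.~\eqref{eq:BA-hom-inj-filt} is injective, i.e.\ exactly when $\K$ is compatibly filtral. Since $p$ is a morphism between compact Hausdorff locales, $p^*$ injective is equivalent to $p$ being a localic surjection by Lemma~\ref{l:CHLoc-monos-repis}\ref{i:repis-chloc} (or directly from the definition in Section~\ref{sec:locales}), giving \ref{i:localic-comparison-filt-prod}.

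The main obstacle I anticipate is making the identification of $p^*$ with the ideal-extension of the Boolean coproduct comparison map fully rigorous, in particular checking that $p^*$ does restrict to the expected map $B_1+B_2\to\B(\Sub{S_1\times S_2})$ on complemented elements, i.e.\ that $p^*$ sends the generating complemented elements coming from each factor to the subobjects determined by the projections $S_1\times S_2\to S_i$. This amounts to unwinding, through the isomorphism $\Sub{-}^\op\cong O\F(-)$ and the description of $p$ as the comparison map from $\F$ applied to a product to the product of the $\F$-images, the concrete action of the projections' preimage maps on complemented subobjects. Once that compatibility is verified, the rest is the clean Stone-duality reduction from frames of ideals to Boolean algebras, and the equivalence of the two statements follows formally.
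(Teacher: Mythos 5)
Your overall strategy---reducing the localic surjectivity of $p$ to the injectivity of $p^*$ and then to the injectivity of a Boolean algebra comparison map via the ideal functor---is the same as the paper's. But there is one genuine gap: you assert that $O\F(S_1\times S_2)=\Sub{S_1\times S_2}^\op$ ``is again a Stone frame, so it is $\Idl(\B(\Sub{S_1\times S_2}))$''. Nothing in the hypotheses guarantees this. Filtrality of $\K$ only says that every object is \emph{covered} by a filtral one; it does not say that the product of two filtral objects is again filtral, and controlling the relationship between $S_1$, $S_2$ and $S_1\times S_2$ is precisely what the notion of compatible filtrality is for. If $\Sub{S_1\times S_2}^\op$ were known to be a Stone frame, your identification of $p^*$ with $\Idl$ of the comparison map would indeed finish the proof, but as it stands the isomorphism $\Idl(\B(\Sub{S_1\times S_2}^\op))\cong\Sub{S_1\times S_2}^\op$ is unjustified.

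The repair is exactly the point where the paper's proof differs from yours: one does not need that isomorphism, only that the canonical frame homomorphism $\lambda^*\colon\Idl(\B(\Sub{S_1\times S_2}^\op))\to\Sub{S_1\times S_2}^\op$ extending the inclusion of the Boolean center is \emph{injective}. This follows because its right adjoint $\lambda_*$ sends a subobject $S$ to the ideal of complemented elements below it, so $\lambda_*(0)=0$; since both locales involved are compact Hausdorff (the codomain by Proposition~\ref{p:functor-to-CHLoc}), Lemma~\ref{l:dense-implies-surjective} gives that the corresponding localic map is a surjection, i.e.\ $\lambda^*$ is injective. One then factors $p^*=\lambda^*\cdot\Idl(\mu)$, where $\mu$ is the Boolean comparison map, and injectivity of $\lambda^*$ yields that $p^*$ is injective if and only if $\Idl(\mu)$ is; from there your Stone-duality reduction (that $\Idl$ on Boolean algebras preserves and reflects monomorphisms) correctly takes over. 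A minor further point: the map in eq.~\eqref{eq:BA-hom-inj-filt} involves $\B(\Sub{S_i})$ rather than $\B(\Sub{S_i}^\op)$, so one should also record the isomorphisms $\B(L^\op)\cong\B(L)^\op$ and $B_1^\op+B_2^\op\cong(B_1+B_2)^\op$ to pass between the two; you elide this, but it is routine.
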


\begin{proof}
Note that, for any distributive lattice $L$ and Boolean algebras $B_1, B_2$, 
\[
\B(L^\op) \cong \B(L)^\op \ \ \text{and} \ \ B_1^\op + B_2^\op \cong (B_1 + B_2)^\op.
\]
Thus, the Boolean algebra homomorphism in eq.~\eqref{eq:BA-hom-inj-filt} is injective precisely when the following one is:
\[
\mu\colon \B(\Sub{S_1}^\op) + \B(\Sub{S_2}^\op) \to \B(\Sub{S_1 \times S_2}^\op).
\]

The Boolean center construction yields a right adjoint to the inclusion $\BA\into \DLat$, and so the restricted functor of ideals $\Idl\colon \BA \to \Frm$ is left adjoint because it is the composition of two left adjoint functors (cf.\ Section~\ref{s:frames-of-ideals}): 
\[\begin{tikzcd}
\BA \arrow[hookrightarrow]{r} & \DLat \arrow{r}{\Idl} & \Frm.
\end{tikzcd}\]
In particular, $\Idl\colon \BA \to \Frm$ preserves colimits. Further, direct inspection shows that it preserves and reflects monomorphisms. Hence, $\mu$ is injective if, and only if, its image
\[
\Idl(\mu)\colon \Idl(\B(\Sub{S_1}^\op)) + \Idl(\B(\Sub{S_2}^\op)) \to \Idl(\B(\Sub{S_1 \times S_2}^\op))
\]
under the functor $\Idl\colon \BA \to \Frm$ is an injective frame homomorphism.

Since $S_1$ and $S_2$ are filtral objects, the domain of $\Idl(\mu)$ can be identified with the frame $\Sub{S_1}^\op + \Sub{S_2}^\op$. We thus have a commutative diagram as follows,
\[\begin{tikzcd}
\Sub{S_1}^\op + \Sub{S_2}^\op \arrow{dr}[swap]{p^*} \arrow{r}{\Idl(\mu)} & \Idl(\B(\Sub{S_1 \times S_2}^\op)) \arrow{d}{\lambda^*} \\
{} & \Sub{S_1 \times S_2}^\op
\end{tikzcd}\]
where $p^*$ is the frame homomorphism corresponding to the localic map $p$ in item~\ref{i:localic-comparison-filt-prod}, and $\lambda^*$ is the unique frame homomorphism (induced by the universal property of the left adjoint $\Idl\colon \DLat\to \Frm$) extending the inclusion of distributive lattices 
\[
\B(\Sub{S_1 \times S_2}^\op)\into \Sub{S_1 \times S_2}^\op.
\]
Note that the codomain of the localic map $p$ is given by a product in $\CHLoc$, whereas the coproduct $\Sub{S_1}^\op + \Sub{S_2}^\op$ is computed in the category $\Frm$; however, the two are dual to each other because $\CHLoc$ is closed under limits in $\Loc$ (see Section~\ref{s:frames-of-ideals}).
Now, the right adjoint $\lambda_*$ of $\lambda^*$ sends $S\in \Sub{S_1 \times S_2}^\op$ to the ideal 
\[
\lambda_*(S)\coloneqq \{U\in \B(\Sub{S_1 \times S_2}^\op)\mid U \sleq S\}.
\]
It follows that $\lambda_*(0) = 0$ and so, by Lemma~\ref{l:dense-implies-surjective}, $\lambda^*$ is injective. 
The statement of the lemma follows by observing that, since $\lambda^*\cdot \Idl(\mu) = p^*$ and $\lambda^*$ is injective, $\Idl(\mu)$ is injective if, and only if, $p^*$ is injective, i.e.\ $p$ is a localic surjection.
\end{proof}

\begin{proposition}\label{product-localic-surj}
Let $\K$ be a compatibly filtral category with enough subobjects. The functor $\F\colon \K\to\CHLoc$ preserves binary products.
\end{proposition}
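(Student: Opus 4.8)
The plan is to apply Lemma~\ref{flat-functor-lemma} with $\cat{C}=\K$ and $\cat{D}=\CHLoc$. Both standing hypotheses of that lemma are available: $\K$ is finitely complete (being coherent) and $\CHLoc$ is regular. It therefore remains to verify that (i) $\F$ preserves equalisers, and (ii) for every pair of objects $X,Y\in\K$ the canonical comparison morphism $p_{X,Y}\coloneqq\langle\F\pi_1,\F\pi_2\rangle\colon\F(X\times Y)\to\F(X)\times\F(Y)$ is a regular epimorphism, equivalently a localic surjection (Lemma~\ref{l:CHLoc-monos-repis}). Point (i) is already in hand: it is precisely Proposition~\ref{p:preservation-of-equalizers}, which applies because $\K$ is filtral and has enough subobjects. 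So the whole argument reduces to establishing (ii).

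For (ii) the idea is to bootstrap from the filtral case, where compatible filtrality gives exactly what we want, to arbitrary objects by means of covers. Indeed, Lemma~\ref{l:comp-filtral-rephrasing} tells us that, since $\K$ is compatibly filtral, $p_{S_1,S_2}$ is a localic surjection whenever $S_1,S_2$ are filtral. First I would use filtrality of $\K$ to choose regular epimorphisms $e_1\colon S_1\repi X$ and $e_2\colon S_2\repi Y$ with $S_1,S_2$ filtral. I would then observe that the product $e_1\times e_2\colon S_1\times S_2\to X\times Y$ is again a regular epimorphism: it factors as $(e_1\times\mathrm{id}_Y)\cdot(\mathrm{id}_{S_1}\times e_2)$, and each factor, being a pullback of $e_1$ or of $e_2$ along a projection, is a regular epimorphism, so their composite is one as well (regular epimorphisms in a regular category are pullback-stable and closed under composition). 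The same reasoning is available in $\CHLoc$.

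Next I would exploit the naturality of the comparison morphism $p$ in both variables, which yields a commutative square
\[\begin{tikzcd}
\F(S_1\times S_2) \arrow{r}{p_{S_1,S_2}} \arrow{d}[swap]{\F(e_1\times e_2)} & \F(S_1)\times\F(S_2) \arrow{d}{\F(e_1)\times\F(e_2)} \\
\F(X\times Y) \arrow{r}{p_{X,Y}} & \F(X)\times\F(Y).
\end{tikzcd}\]
In this square the top map $p_{S_1,S_2}$ is a localic surjection by Lemma~\ref{l:comp-filtral-rephrasing}; the left map $\F(e_1\times e_2)$ is a localic surjection because $\F$ preserves regular epimorphisms (Proposition~\ref{p:functor-to-CHLoc}) and $e_1\times e_2$ is a regular epimorphism; and the right map $\F(e_1)\times\F(e_2)$ is a localic surjection because $\F(e_1),\F(e_2)$ are localic surjections and products of regular epimorphisms are again regular epimorphisms in the regular category $\CHLoc$. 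Consequently the composite $p_{X,Y}\cdot\F(e_1\times e_2)=(\F(e_1)\times\F(e_2))\cdot p_{S_1,S_2}$ is a regular epimorphism, and since in a regular category a composite $g\cdot f$ being a regular epimorphism forces $g$ to be one, $p_{X,Y}$ is a localic surjection. This establishes (ii), and via Lemma~\ref{flat-functor-lemma} the proposition follows.

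The main obstacle is exactly this passage from filtral objects to arbitrary ones: compatible filtrality controls the comparison map only on filtral objects, and transferring that control along covers requires combining the naturality of $p$ with the stability properties of regular epimorphisms (pullback-stability, closure under composition and under products, and the cancellation property that $g\cdot f$ being a regular epimorphism forces $g$ to be one). The delicate point that legitimises the final cancellation step is checking that $e_1\times e_2$ and $\F(e_1)\times\F(e_2)$ are genuinely \emph{regular} epimorphisms, and not merely epimorphisms.
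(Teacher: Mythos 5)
Your proposal is correct and follows essentially the same route as the paper: reduce via Lemma~\ref{flat-functor-lemma} and Proposition~\ref{p:preservation-of-equalizers} to surjectivity of the comparison maps, cover $X$ and $Y$ by filtral objects, and transfer surjectivity from the filtral case (Lemma~\ref{l:comp-filtral-rephrasing}) along the naturality square using that products of regular epimorphisms are regular epimorphisms. The only difference is that you spell out a few standard details (the pullback-stability argument for $e_1\times e_2$ and the cancellation step) that the paper leaves implicit or cites.
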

\begin{proof}
In view of Proposition~\ref{p:preservation-of-equalizers} and Lemma~\ref{flat-functor-lemma}, it suffices to show that for any two objects $X$ and $Y$ of $\K$, the canonical map
\[
\F(X \times Y) \to  \F(X) \times \F(Y)
\]
is a localic surjection.

Consider regular epimorphisms $\epsilon_1\colon S_1 \repi X$ and $\epsilon_2\colon S_2 \repi Y$ such that $S_1$ and $S_2$ are filtral objects. Since $\F$ preserves regular epimorphisms by Proposition~\ref{p:functor-to-CHLoc}, and the product of regular epimorphisms in a regular category is again a regular epimorphism (see e.g.\ \cite[Proposition~1.12, p.~134]{BarrGrilletOsdol1971}), we obtain a commutative square as follows,
\[\begin{tikzcd}
\F(S_1 \times S_2) \arrow{r} \arrow[twoheadrightarrow]{d}[swap]{\F(\epsilon_1\times\epsilon_2)} &  \F(S_1) \times \F(S_2) \arrow[twoheadrightarrow]{d}{\F(\epsilon_1)\times\F(\epsilon_2)} \\
\F(X \times Y) \arrow{r} & \F(X) \times \F(Y)
\end{tikzcd}\]
where the horizontal arrows are the canonical ones. The top horizontal arrow is a localic surjection by Lemma~\ref{l:comp-filtral-rephrasing}, hence the bottom horizontal one is also a localic surjection.
\end{proof}

\subsection{Terminal object: non-triviality}\label{ss:non-triviality}
Concerning the preservation of the terminal object, we note the following easy fact:
\begin{lemma}\label{l:terminal-to-subterminal}
Let $F\colon \cat{C} \to \cat{D}$ be a functor. Suppose $\cat{C}$ and $\cat{D}$ admit a terminal object and $F$ preserves binary products. Then the unique arrow $F(\one_{\cat{C}}) \to \one_{\cat{D}}$ is a monomorphism. 
\end{lemma}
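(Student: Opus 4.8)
The plan is to reduce the claim to the standard fact that an object $A$ of a category with binary products is \emph{subterminal}---equivalently, the unique map $A\to\one$ is a monomorphism---exactly when the two projections $A\times A\to A$ coincide. Recall why this holds: if the projections $\pi_1,\pi_2\colon A\times A\to A$ agree, then for any parallel pair $f,g\colon T\to A$ one has $f=\pi_1\cdot\langle f,g\rangle=\pi_2\cdot\langle f,g\rangle=g$, so all maps into $A$ are equal and hence $A\to\one$ is monic; the converse implication is immediate, since $\pi_1$ and $\pi_2$ are in particular a parallel pair into $A$.

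First I would note that in $\cat{C}$ the projections $\pi_1,\pi_2\colon \one_{\cat{C}}\times\one_{\cat{C}}\to\one_{\cat{C}}$ are equal, since both are arrows into the terminal object and therefore coincide with the unique such arrow. Applying $F$ gives $F\pi_1=F\pi_2$.

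Next I would transport this equality through the preservation of products. Since $F$ preserves binary products, the comparison morphism $\langle F\pi_1,F\pi_2\rangle\colon F(\one_{\cat{C}}\times\one_{\cat{C}})\to F\one_{\cat{C}}\times F\one_{\cat{C}}$ is an isomorphism, and the product projections $p_1,p_2$ of $F\one_{\cat{C}}\times F\one_{\cat{C}}$ satisfy $p_i\cdot\langle F\pi_1,F\pi_2\rangle=F\pi_i$. As the comparison map is invertible, in particular an epimorphism, and $F\pi_1=F\pi_2$, cancelling it on the right yields $p_1=p_2$. By the characterisation recalled above, $F\one_{\cat{C}}$ is subterminal, i.e.\ the unique arrow $F(\one_{\cat{C}})\to\one_{\cat{D}}$ is a monomorphism.

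I do not expect a genuine obstacle here: the argument is a short formal diagram chase. The only points demanding care are keeping $F(\one_{\cat{C}}\times\one_{\cat{C}})$ and $F\one_{\cat{C}}\times F\one_{\cat{C}}$ notationally distinct and correctly pushing the equality of projections across the comparison isomorphism, together with the elementary equivalence between being subterminal, admitting a monic map to the terminal object, and having coinciding self-product projections.
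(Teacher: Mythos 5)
Your argument is correct and is essentially the paper's own proof: the paper takes the product diagram $\one_{\cat{C}} \xleftarrow{\mathrm{id}} \one_{\cat{C}} \xrightarrow{\mathrm{id}} \one_{\cat{C}}$, applies $F$ to get a product diagram with equal projections on $F(\one_{\cat{C}})$, and concludes that any parallel pair $u,v\colon T\to F(\one_{\cat{C}})$ satisfies $u=\langle u,v\rangle=v$. Your detour through the comparison isomorphism and the explicit subterminality criterion is only a cosmetic repackaging of the same diagram chase.
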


\begin{proof}
Note that the unique morphism $F(\one_{\cat{C}}) \to \one_{\cat{D}}$ is monic if, and only if, any two morphisms into $F(\one_{\cat{C}})$ are equal. In $\cat{C}$, the pair of identity arrows $\one_{\cat{C}} \leftarrow \one_{\cat{C}} \rightarrow \one_{\cat{C}}$ is a product diagram, hence the pair of identity arrows
\[
F(\one_{\cat{C}}) \leftarrow F(\one_{\cat{C}}) \rightarrow F(\one_{\cat{C}})
\]
is a product diagram in $\cat{D}$. It follows that, for any two arrows $u,v \colon T \to F(\one_{\cat{C}})$, their pairing $\langle u, v \rangle \colon T \to F(\one_{\cat{C}})$ satisfies $u = \langle u, v \rangle =v$.
\end{proof}

Let $\Omega$ denote the frame of truth values, which is also the initial object of $\Frm$ (see e.g.\ \cite[Lemma~1.5.1]{t}). Because $\CHLoc$ is closed in $\Loc$ under finite limits, $\Omega$ is also the frame corresponding to the terminal object of $\CHLoc$. To avoid confusion, we shall write $\mho$ for the terminal object of $\CHLoc$ (equivalently, of $\Loc$); that is, $O\mho = \Omega$. 
When applied to the functor $\F\colon\K\to\CHLoc$, for $\K$ a compatibly filtral category with enough subobjects, Lemma~\ref{l:terminal-to-subterminal} shows that the unique map 
\[
t \colon \F(\one) \to \mho
\] 
is a localic injection. 
From the intuitionistic point of view, the sufficiently strong way to say that a filtral category is \emph{non-trivial} is to say that $t$ is a surjection, i.e.\ $t^*\colon \Omega \to \Sub{\one}^\op$ is injective, cf.\ \cite[\S 2.21]{fs}. That is, according to the usual terminology in constructive locale theory, a filtral category is non-trivial if $\Sub{\one}^\op$ is a \emph{positive} locale.

\begin{example}
Clearly, $\CHLoc$ is non-trivial because the unique frame homomorphism $\Omega \to \Sub{\mho}^\op$ is an isomorphism.
\end{example}

An application of Lemma~\ref{l:terminal-to-subterminal}, combined with Propositions~\ref{p:preservation-of-equalizers} and~\ref{product-localic-surj}, yields the following result.
\begin{proposition}\label{p:preservation-fin-lim}
Let $\K$ be a non-trivial compatibly filtral category with enough subobjects. The functor $\F\colon\K\to\CHLoc$ is Cartesian.
\end{proposition}

\section{A pretopos embedding into compact Hausdorff locales}\label{sec:main}
In the previous section we saw that, if $\K$ is a non-trivial compatibly filtral category with enough subobjects, $\F\colon\K\to\CHLoc$ is Cartesian.
In this section we shall prove that, if we further assume that $\K$ is a pretopos, the functor $\F$ is a pretopos embedding. In other words, $\K$ is equivalent to a sub-pretopos of $\CHLoc$. The essential image of this embedding is studied in Section~\ref{s:essential image}.

The key ingredient to establish that $\F$ is a pretopos morphism (see Theorem~\ref{t:Sub-pretopos-morphism} below) consists in showing that finite coproducts are preserved:
\begin{proposition}\label{p:preserv-coproducts}
For a filtral pretopos $\K$, the functor $\F \colon \K \to \CHLoc$ preserves finite coproducts.
\end{proposition}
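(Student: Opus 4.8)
The plan is to treat the empty coproduct and binary coproducts separately, reducing both to the extensivity of the pretopos $\K$ together with the concrete description of finite coproducts in $\CHLoc$. For the empty coproduct, observe that since $\K$ is a pretopos its initial object $\zero$ is strict, so the only subobject of $\zero$ is $\zero$ itself; hence $\Sub{\zero}^\op$ is the one-element frame, whose corresponding locale is the empty locale. The latter is compact Hausdorff and is the initial object of $\CHLoc$, so $\F(\zero)$ is initial and $\F$ preserves the empty coproduct.

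For binary coproducts, fix $X,Y\in\K$ with coproduct injections $\iota_X\colon X\to X+Y$ and $\iota_Y\colon Y\to X+Y$. The essential input is that a pretopos is extensive, so every subobject of $X+Y$ splits uniquely as a coproduct of a subobject of $X$ and one of $Y$; concretely, the map
\[
\Sub{X+Y}\to\Sub{X}\times\Sub{Y},\qquad S\mapsto\left(\iota_X^{-1}(S),\,\iota_Y^{-1}(S)\right),
\]
is an isomorphism of distributive lattices. Passing to order-duals yields a frame isomorphism $\Sub{X+Y}^\op\cong\Sub{X}^\op\times\Sub{Y}^\op$.

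It then remains to identify the right-hand side with the frame of $\F(X)+\F(Y)$, the coproduct computed in $\CHLoc$, and to verify that the induced isomorphism is the canonical comparison map. The coproduct of $\F(X)$ and $\F(Y)$ in $\Loc$ has frame $O\F(X)\times O\F(Y)=\Sub{X}^\op\times\Sub{Y}^\op$, the product in $\Frm$. I would check that this frame is compact (a directed join equal to the top has directed projections with join $1$, so by compactness of each factor and directedness some index realises both coordinates as $1$) and regular (the well-inside relation in a product frame is computed coordinatewise); hence, by Proposition~\ref{p:compact-Hausdorff-equiv}, it already lies in $\CHLoc$. Since the reflector $\beta$ is naturally isomorphic to the identity on objects of $\CHLoc$, this locale is the coproduct $\F(X)+\F(Y)$ taken in $\CHLoc$, with injections dual to the two product projections. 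Finally, the comparison map $c\colon\F(X)+\F(Y)\to\F(X+Y)$ determined by $\F(\iota_X)$ and $\F(\iota_Y)$ corresponds at the frame level to the homomorphism $c^*$ sending $S$ to $(\F(\iota_X)^*S,\F(\iota_Y)^*S)=(\iota_X^{-1}(S),\iota_Y^{-1}(S))$, because $\F(\iota_X)^*=\iota_X^{-1}$ is the preimage map (Lemma~\ref{l:2}); this is exactly the frame isomorphism found above, so $c$ is an isomorphism and $\F$ preserves binary coproducts.

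I expect the main obstacle to lie not in any single computation but in the bookkeeping required to confirm that the abstract extensivity isomorphism $\Sub{X+Y}^\op\cong\Sub{X}^\op\times\Sub{Y}^\op$ genuinely agrees with the canonical comparison map $c$; the supporting observation that the $\Loc$-coproduct of two compact Hausdorff locales is again compact Hausdorff (so that the reflector $\beta$ may be discarded) is routine but must be recorded, since the paper only describes colimits in $\CHLoc$ as $\beta$ applied to colimits in $\Loc$.
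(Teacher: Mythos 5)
Your proposal is correct and follows essentially the same route as the paper: handle the empty coproduct via strictness of $\zero$, observe that binary coproducts of compact Hausdorff locales in $\Loc$ are already compact Hausdorff, and identify the canonical comparison frame homomorphism $\Sub{X+Y}^\op\to\Sub{X}^\op\times\Sub{Y}^\op$ with the extensivity decomposition of subobjects. The only difference is one of detail: where you cite extensivity for the bijectivity of $S\mapsto(\iota_X^{-1}(S),\iota_Y^{-1}(S))$, the paper proves it explicitly (injectivity from the coproduct decomposition of a subobject, surjectivity by taking the image factorisation of $m_1+m_2$ and using pullback-stability of images), which you would need to spell out to make the argument self-contained.
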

\begin{proof}
The initial object in a coherent category is strict, hence $\Sub{\zero}^\op$ is the one-element frame and the corresponding locale $\F(\zero)$ is initial in $\CHLoc$.
Now, consider a coproduct diagram
\begin{equation}\label{eq:coproduct}
\begin{tikzcd}
X \arrow{r}{i_1} & X+Y & Y \arrow{l}[swap]{i_2}
\end{tikzcd}
\end{equation} 
in $\K$, and its image in $\CHLoc$ under the functor $\F$:
\[\begin{tikzcd}
\F(X) \arrow{r}{\F(i_1)} & \F(X+Y) & \F(Y). \arrow{l}[swap]{\F(i_2)}
\end{tikzcd}\]
Products in the category of frames are computed in the category of sets, and direct inspection shows that compact Hausdorff (equivalently, compact regular) locales are closed under binary coproducts in the category of locales. 
Thus, the functor $\F$ preserves the coproduct diagram in eq.~\eqref{eq:coproduct} if, and only if, the unique frame homomorphism $\zeta$ making the following diagram commute is an isomorphism,
\[\begin{tikzcd}[row sep = 2em]
{} & \Sub{X+Y}^\op \arrow{dl}[swap]{i_1^{-1}} \arrow{dr}{i_2^{-1}} \arrow{d}[description]{\zeta} & {} \\
\Sub{X}^\op & \Sub{X}^\op\times \Sub{Y}^\op \arrow{l} \arrow{r} & \Sub{Y}^\op
\end{tikzcd}\]
where the bottom row is a product diagram in $\Frm$.

Observe that, since $\K$ is extensive, for all subobjects $m\colon S\mono X+Y$ the top row in the following diagram is a coproduct.
\begin{equation}\label{eq:extensivity}
\begin{tikzcd}
i_1^{-1}(S) \arrow[rightarrowtail]{d}[swap]{\lambda_m} \arrow{r} \arrow[dr, phantom, "\lrcorner", very near start] & S \arrow[rightarrowtail]{d}[description]{m} & i_2^{-1}(S) \arrow[rightarrowtail]{d}{\rho_m} \arrow{l} \arrow[dl, phantom, "\llcorner", very near start] \\
X \arrow{r}{i_1} & X+Y & Y \arrow{l}[swap]{i_2}
\end{tikzcd}
\end{equation}
Hence, $\zeta$ can be described explicitly as follows:
\[
\zeta\colon \Sub{X+Y}^\op\to \Sub{X}^\op\times \Sub{Y}^\op, \ \ \zeta(m)=(\lambda_m,\rho_m).
\]
To see that $\zeta$ is injective, suppose that $m\colon S\mono X+Y$ and $n\colon T\mono X+Y$ are subobjects such that $\zeta(m)=\zeta(n)$. Thus, in the slice category $\K/{(X+Y)}$, 
\[
i_1 \cdot \lambda_m \cong i_1\cdot \lambda_n \ \text{ and } \ i_2 \cdot \rho_m \cong i_2\cdot \rho_n.
\]
On the other hand, by eq.~\eqref{eq:extensivity} we have
\[
m \cong i_1\cdot \lambda_m + i_2 \cdot \rho_m \ \text{ and } \ n \cong i_1\cdot \lambda_n + i_2 \cdot \rho_n
\]
in $\K/{(X+Y)}$, and so $m\cong n$ as objects of the slice category. Equivalently, $m=n$ as subobjects of $X+Y$.

It remains to show that $\zeta$ is surjective, hence a frame isomorphism.
Consider a pair $(m_1,m_2)\in \Sub{X}^\op\times \Sub{Y}^\op$ given by subobjects $m_1\colon S_1\mono X$ and $m_2\colon S_2\mono Y$.
Let 
\[
m_1+m_2\colon S_1+S_2\to X+Y
\] 
be the coproduct of $i_1\cdot m_1$ and $i_2\cdot m_2$ in $\K/{(X+Y)}$, and let ${\iota_1\colon S_1\to S_1+S_2}$ and ${\iota_2\colon S_2\to S_1+S_2}$ be the coproduct arrows. Since $\K$ is extensive, the following diagram comprises two pullback squares.
\[\begin{tikzcd}[row sep=3em, column sep=3em]
S_1 \arrow{r}{\iota_1} \arrow[rightarrowtail]{d}[swap]{m_1} \arrow[dr, phantom, "\lrcorner", very near start] & S_1+S_2 \arrow{d}[description]{m_1+m_2} & S_2 \arrow{l}[swap]{\iota_2} \arrow[rightarrowtail]{d}{m_2} \arrow[dl, phantom, "\llcorner", very near start] \\
X \arrow{r}{i_1} & X+Y & Y \arrow{l}[swap]{i_2}
\end{tikzcd}\]
Consider the (regular epi, mono) factorisation of $m_1+m_2$, as displayed below:
\[\begin{tikzcd}
S_1+S_2 \arrow[twoheadrightarrow]{r}{e} & T \arrow[rightarrowtail]{r}{n} & X+Y.
\end{tikzcd}\] 
We claim that $\zeta(n)=(m_1,m_2)$. 
Because image factorisations in a regular category are pullback-stable, the pullback of $n$ along $i_1$, i.e.\ $\lambda_n$, coincides with the image of the pullback of $m_1+m_2$ along $i_1$. By the diagram above, the latter pullback coincides with $m_1\colon S_1\mono X$. Therefore, $\lambda_n = m_1$ as subobjects of $X$. Similarly, we get $\rho_n= m_2$ as subobjects of $Y$, showing that $\zeta(n)=(m_1,m_2)$.
\end{proof}

We thus obtain our first main result, which identifies sufficient conditions ensuring that a pretopos can be embedded into the pretopos $\CHLoc$ of compact Hausdorff locales. 
\begin{theorem}\label{t:Sub-pretopos-morphism}
Let $\K$ be a non-trivial compatibly filtral pretopos with enough subobjects. 
The functor $\F \colon \K \to \CHLoc$ is a pretopos embedding.
\end{theorem}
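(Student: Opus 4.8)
The plan is to deduce the result by assembling the preservation properties established earlier, via the two abstract criteria recalled in Section~\ref{s:pretoposes}. Note first that since $\K$ is compatibly filtral it is filtral, and being a pretopos it is thus a filtral pretopos; moreover $\CHLoc$ is a pretopos by Example~\ref{ex:CHLoc-pretopos}. I would then recall that, by Lemma~\ref{l:pretopos-morphism-iff-coherent}, a Cartesian functor between pretoposes is a pretopos morphism as soon as it preserves finite coproducts and regular epimorphisms, and that, by Lemma~\ref{l:pretopos-mor-full-faithful}, a pretopos morphism that is bijective on subobjects is automatically an embedding. Hence it suffices to check four things about $\F$: that it is Cartesian, preserves finite coproducts, preserves regular epimorphisms, and is bijective on subobjects.

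First I would observe that $\F$ is Cartesian by Proposition~\ref{p:preservation-fin-lim}, whose hypotheses---non-triviality, compatible filtrality, and enough subobjects---are exactly those assumed here. Next, $\F$ preserves finite coproducts by Proposition~\ref{p:preserv-coproducts} (applicable because $\K$ is a filtral pretopos), and it preserves regular epimorphisms by the final assertion of Proposition~\ref{p:functor-to-CHLoc}. By Lemma~\ref{l:pretopos-morphism-iff-coherent}, these facts combine to show that $\F$ is a pretopos morphism. Finally, $\F$ is bijective on subobjects by Lemma~\ref{l:full-on-subobjects}, so Lemma~\ref{l:pretopos-mor-full-faithful} promotes $\F$ to a pretopos embedding, as required.

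I expect no genuine obstacle in the present argument, which amounts to a bookkeeping assembly of the prior results; the substantive work was carried out beforehand. The most delicate ingredient is the Cartesianness of $\F$ furnished by Proposition~\ref{p:preservation-fin-lim}, and within it the preservation of binary products, where compatible filtrality is precisely the hypothesis forcing each comparison map $\F(X\times Y)\to\F(X)\times\F(Y)$ to be a localic surjection (Lemma~\ref{l:comp-filtral-rephrasing} and Proposition~\ref{product-localic-surj}); the preservation of finite coproducts (Proposition~\ref{p:preserv-coproducts}) is the other nontrivial input, resting on the extensivity of~$\K$.
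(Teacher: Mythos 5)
Your proposal is correct and follows exactly the same route as the paper's own proof: Cartesianness from Proposition~\ref{p:preservation-fin-lim}, preservation of regular epimorphisms from Proposition~\ref{p:functor-to-CHLoc}, preservation of finite coproducts from Proposition~\ref{p:preserv-coproducts}, then Lemma~\ref{l:pretopos-morphism-iff-coherent} to get a pretopos morphism, and finally Lemmas~\ref{l:full-on-subobjects} and~\ref{l:pretopos-mor-full-faithful} to upgrade it to an embedding. No gaps.
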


\begin{proof}
The functor $\F \colon \K \to \CHLoc$ is Cartesian and preserves regular epimorphisms by Propositions~\ref{p:preservation-fin-lim} and~\ref{p:functor-to-CHLoc}, respectively. Moreover, it preserves finite coproducts by Proposition~\ref{p:preserv-coproducts}, and so Lemma~\ref{l:pretopos-morphism-iff-coherent} entails that $\F$ is a pretopos morphism.

Finally, $\F$ is bijective on subobjects by Lemma~\ref{l:full-on-subobjects}, and thus a pretopos embedding by Lemma~\ref{l:pretopos-mor-full-faithful}.
\end{proof}

From Theorem~\ref{t:Sub-pretopos-morphism} we can derive the following characterisation, up to weak equivalence, of the category of compact Hausdorff locales. This characterisation is constructive, hence valid in the logic of any topos, but somewhat unsatisfactory because it involves a universal quantification over the class of all Boolean algebras. In Section~\ref{s:essential image} we will refine Theorem~\ref{t:Sub-pretopos-morphism} in a different direction, by studying the essential image of $\F$ under the assumption that the law of weak excluded middle holds.

\begin{corollary}\label{cor:essent-surj}
Let $\K$ be a non-trivial compatibly filtral pretopos with enough subobjects. 
Then $\K$ is weakly equivalent to $\CHLoc$ if, and only if, the following holds:
\begin{enumerate}[label=\textnormal{(\textbf{B})}]
\item\label{condition-B} For every Boolean algebra $B$ there exist an object $X\in\K$ and an injective lattice homomorphism $B\into \Sub{X}$.
\end{enumerate}
\end{corollary}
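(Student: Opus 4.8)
The plan is to combine Theorem~\ref{t:Sub-pretopos-morphism} with the criterion for essential surjectivity recorded after Lemma~\ref{l:cover-essential-image}. Under the standing hypotheses $\F\colon\K\to\CHLoc$ is a pretopos embedding, in particular bijective on subobjects, so it is a weak equivalence if and only if it covers its codomain. Hence it suffices to prove that $\F$ covers its codomain exactly when~\ref{condition-B} holds. Granting this, the ``if'' part of the corollary gives at once that $\K$ is weakly equivalent to $\CHLoc$ (via $\F$); for the ``only if'' part I would invoke the invariance of~\ref{condition-B} under weak equivalences (any fully faithful, essentially surjective functor is bijective on subobjects, since faithful functors reflect monomorphisms and full functors lift them, so it induces isomorphisms of subobject posets), together with the fact that $\CHLoc$ itself satisfies~\ref{condition-B}, witnessed by taking $X$ to be the Stone locale with $OX=\Idl(B)$.

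First I would reduce covering all of $\CHLoc$ to covering Stone locales. Since $\CHLoc$ is filtral, every compact Hausdorff locale $d$ admits a regular epimorphism $S\repi d$ from a Stone locale $S$; as regular epimorphisms compose in a regular category and $\F$ preserves them, $\F$ covers its codomain if and only if it covers every Stone locale. An object $X\in\CHLoc$ is a Stone locale exactly when $\Sub{X}^\op\cong OX$ is a Stone frame, i.e.\ isomorphic to $\Idl(B)$ for a Boolean algebra $B$, so the Stone locales are parametrised by $\BA$. By Lemma~\ref{l:CHLoc-monos-repis}, a regular epimorphism $\F(c)\repi S_B$ onto the Stone locale $S_B$ with $OS_B=\Idl(B)$ is precisely an injective frame homomorphism $\Idl(B)\into\Sub{c}^\op$. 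The goal thus becomes: \ref{condition-B} holds if and only if for every Boolean algebra $B$ there is an object $c\in\K$ admitting an injective frame homomorphism $\Idl(B)\into\Sub{c}^\op$.

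For the implication from covering to~\ref{condition-B}, I would restrict such an injective frame homomorphism $\Idl(B)\into\Sub{c}^\op$ to Boolean centers. A frame homomorphism preserves complements of complemented elements, hence carries $\B(\Idl(B))$ injectively into $\B(\Sub{c}^\op)$; since $\B(\Idl(B))\cong B$ and the Boolean center is a sublattice of $\Sub{c}^\op$ containing $0$ and $1$, composing with $\B(\Sub{c}^\op)\into\Sub{c}^\op$ and passing to order-duals (using $B\cong B^\op$) yields an injective lattice homomorphism $B\into\Sub{c}$, which is~\ref{condition-B} with $X=c$.

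The converse is where the real work lies, and the main obstacle is that one cannot simply extend a lattice embedding $B\into\Sub{X}^\op$ to a frame homomorphism $\Idl(B)\to\Sub{X}^\op$ and expect it to stay injective --- in general it does not. Instead, given~\ref{condition-B}, I would first note that an injective lattice homomorphism $B\into\Sub{X}$ lands in the Boolean center (a lattice homomorphism sends complemented elements to complemented elements), so it corestricts to an injective Boolean algebra homomorphism $B\into\B(\Sub{X})\cong\B(\Sub{X}^\op)$. Next I would use filtrality of $\K$ to choose a regular epimorphism $e\colon S\repi X$ with $S$ filtral; the induced localic surjection $\F(e)$ gives an injective frame homomorphism $\Sub{X}^\op\into\Sub{S}^\op$, which restricts to an injective Boolean algebra homomorphism $\B(\Sub{X}^\op)\into\B(\Sub{S}^\op)=:C$. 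Composing produces an injective Boolean algebra homomorphism $B\into C$. Applying the functor $\Idl\colon\BA\to\Frm$, which preserves monomorphisms (as noted in the proof of Lemma~\ref{l:comp-filtral-rephrasing}), yields an injective frame homomorphism $\Idl(B)\into\Idl(C)$; and since $S$ is filtral, $\Sub{S}^\op$ is a Stone frame, so $\Idl(C)=\Idl(\B(\Sub{S}^\op))\cong\Sub{S}^\op$. This exhibits the required injective frame homomorphism $\Idl(B)\into\Sub{S}^\op$, witnessing that $\F$ covers $S_B$ with $c=S$, and completes the argument.
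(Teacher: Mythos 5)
Your proposal is correct and follows essentially the same route as the paper: reduce weak equivalence of $\F$ to covering Stone locales, identify such a cover with an injective frame homomorphism $\Idl(B)\into\Sub{c}^\op$, and obtain it from~\ref{condition-B} by corestricting to the Boolean center, passing along a filtral cover, and applying the monomorphism-preserving functor $\Idl\colon\BA\to\Frm$. The only differences are cosmetic (where the order-dualisation $B\cong B^\op$ is applied, and your more explicit treatment of the easy converse direction).
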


\begin{proof}
Fix a non-trivial compatibly filtral pretopos $\K$ with enough subobjects. Clearly, if there exists a weak equivalence $\K\to\CHLoc$, then $\K$ satisfies~\ref{condition-B}.

Conversely, recall from Section~\ref{s:pretoposes} that a pretopos morphism is a weak equivalence precisely when it is an embedding that covers its codomain. By Theorem~\ref{t:Sub-pretopos-morphism}, combined with the fact that every compact Hausdorff locale is covered by a Stone locale, the functor $\F \colon \K \to \CHLoc$ is a weak equivalence if, and only if, the following condition holds:
\begin{enumerate}[label=\textnormal{(\textbf{B}')}]
\item\label{condition-B'} For every Boolean algebra $B$ there exist an object $X\in\K$ and an injective frame homomorphism $\Idl(B)\into \Sub{X}^{\op}$.
\end{enumerate}

We show that~\ref{condition-B} $\Rightarrow$ \ref{condition-B'} (in fact, the two conditions are equivalent). For any Boolean algebra $B$, condition~\ref{condition-B} applied to $B^{\op}$ yields the existence of an object $Y\in \K$ and an injective lattice homomorphism $B\into \Sub{Y}^{\op}$. Let $X$ be a filtral object of $\K$ covering~$Y$. Since $\F\colon \K\to\CHLoc$ preserves (regular) epimorphisms by Proposition~\ref{p:functor-to-CHLoc}, we get an injective frame homomorphism $\Sub{Y}^{\op}\into \Sub{X}^{\op}$; the composite 
\[
B\into \Sub{Y}^{\op}\into \Sub{X}^{\op}
\]
then corestricts to an injective Boolean algebra homomorphism $B\into \B(\Sub{X}^{\op})$. The functor $\Idl\colon \BA \to \Frm$ preserves monomorphisms, thus it sends the latter morphism to an injective frame homomorphism
\[
\Idl(B)\into \Idl(\B(\Sub{X}^{\op}))\cong \Sub{X}^{\op}. \qedhere
\]
\end{proof}

\section{The essential image of the embedding}\label{s:essential image}
All the results we have obtained so far---in particular, the construction of the pretopos embedding $\F \colon \K \to \CHLoc$ in Theorem~\ref{t:Sub-pretopos-morphism}---are valid in the internal logic of any topos.
In order to study the essential image of $\F$, in this section we shall consider the principle of \emph{weak excluded middle} stating that, for any proposition $P$,
\[
\neg P \vee \neg \neg P
\]
holds. The latter principle is equivalent to \emph{de Morgan's law},\footnote{This is the only one of the four laws, collectively named de Morgan's laws and expressing the fact that negation is a lattice anti-isomorphism in a Boolean algebra, that is not an intuitionistic tautology.} stating that 
\[
\neg (P \wedge Q) \to (\neg P \vee \neg Q)
\]
holds for all propositions $P$ and $Q$. A \emph{de Morgan topos} is one whose internal logic satisfies de Morgan's law. Henceforth, we mark with $\WEM$ the results that rely on the principle of weak excluded middle.

\begin{remark}
In a de Morgan topos, compact Hausdorff locales need not be spatial. In fact, every non-Boolean topos contains an internal compact Hausdorff locale (and even a Stone locale) that is not spatial; cf.\ Lemma~\ref{l:enough-points-lem} below.
\end{remark}

\begin{example}
The topos of sheaves on a space $X$ is de Morgan if, and only if, $X$ is extremally disconnected. More generally, the topos of sheaves on a locale $X$ is de Morgan precisely when $X$ is extremally disconnected (i.e.\ $OX$ satisfies de Morgan's law), and it is not Boolean unless $OX$ is a Boolean algebra.  Moreover, the topos of presheaves on a small category $\cat{C}$ is de Morgan precisely when $\cat{C}$ satisfies the \emph{right Ore condition}, which states that any pair of arrows in $\cat{C}$ with common codomain can be completed to a commutative square. 
For example, if $M$ is a commutative monoid then the topos $\Set^{M}$ of $M$-sets is de Morgan, and it is not Boolean unless $M$ is a group.
Cf.\ e.g.\ \cite[\S D4.6]{Elephant2}.
\end{example}

Let $\K$ be a pretopos satisfying the assumptions of Theorem~\ref{t:Sub-pretopos-morphism}.
Our second main result is valid in the internal logic of a de Morgan topos and shows that if $\K$ admits set-indexed copowers of its terminal object, the essential image of $\F$ contains all spatial compact Hausdorff locales and their closed sublocales. 

\begin{theorem}[$\WEM$]\label{th:spatial-locales-covered}
Let $\K$ be a non-trivial compatibly filtral pretopos with enough subobjects, and assume that the terminal object of $\K$ admits all set-indexed copowers.
Then the essential image of the pretopos embedding $\F \colon \K \to \CHLoc$ contains all spatial compact Hausdorff locales and their closed sublocales.
\end{theorem}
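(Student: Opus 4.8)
The plan is to deduce the result from Lemma~\ref{l:cover-essential-image}, which reduces membership in the essential image to a covering condition. Since $\F$ is a pretopos embedding (Theorem~\ref{t:Sub-pretopos-morphism}) and is bijective on subobjects (Lemma~\ref{l:full-on-subobjects}), an object $Z\in\CHLoc$ lies in the essential image of $\F$ as soon as there exist an object $X\in\K$ and a regular epimorphism $\F(X)\repi Z$. Moreover, the essential image is automatically closed under subobjects: if $Z\cong\F(X)$ and $C\mono Z$ is a closed sublocale, then $C$ is a subobject of $Z$ in $\CHLoc$ (Example~\ref{ex:subobjects-CHLoc}), so bijectivity on subobjects yields a subobject $Y\mono X$ with $\F(Y)\cong C$. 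Hence it suffices to cover every spatial compact Hausdorff locale $Z$; the closed sublocales are then obtained for free, and---crucially---we need not know that they are themselves spatial.

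First I would fix a spatial $Z$, set $P\coloneqq\pts(Z)$ (a set, as its elements are frame homomorphisms $OZ\to\Omega$), and take $X\coloneqq P\cdot\one$, the copower which exists by hypothesis, with coproduct injections $\kappa_p\colon\one\to P\cdot\one$. Because $\F$ is Cartesian it preserves the terminal object, so $\F(\one)\cong\mho$ (indeed the comparison $t\colon\F(\one)\to\mho$ is a localic injection by Lemma~\ref{l:terminal-to-subterminal} and a localic surjection by non-triviality, hence an isomorphism since $\CHLoc$ is balanced). Applying $\F$ to the $\kappa_p$ therefore produces a $P$-indexed family of points $\F(\kappa_p)\colon\mho\to\F(P\cdot\one)$. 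The goal is to assemble these into a localic map $q\colon\F(P\cdot\one)\to Z$ sending the point $\F(\kappa_p)$ to the point $p$ of $Z$, and to prove that $q$ is a surjection. As both $\F(P\cdot\one)$ (Proposition~\ref{p:functor-to-CHLoc}) and $Z$ are compact Hausdorff, by Lemma~\ref{l:dense-implies-surjective} it is enough to show that $q$ is \emph{dense}.

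To build $q$ I would use the discrete locale $P_d$ on $P$, whose frame is $\Omega^P$, together with the canonical evaluation frame homomorphism $\epsilon^*\colon OZ\to\Omega^P$, $u\mapsto(p\mapsto p^*(u))$; here spatiality of $Z$ is exactly the assertion that $\epsilon^*$ is injective. On the other side, each $A\in\Omega^P$ determines a subobject $S_A\coloneqq\coprod_{p\in A}\one\mono P\cdot\one$, complemented in $\Sub{P\cdot\one}$ with complement $S_{P\setminus A}$, and these assemble into a frame homomorphism $\psi\colon\Omega^P\to\Sub{P\cdot\one}^\op=O\F(P\cdot\one)$. Setting $q^*\coloneqq\psi\circ\epsilon^*$ then defines $q$, and density reduces to the implication $q^*(u)=0\Rightarrow u=0$; given that the composite of $\psi$ with evaluation at the points $\F(\kappa_p)$ is (close to) the identity of $\Omega^P$, this follows from the injectivity of $\epsilon^*$. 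Composing the resulting surjection with Lemma~\ref{l:cover-essential-image} places $Z$, and hence all its closed sublocales, in the essential image.

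The hard part, and the only place where weak excluded middle is needed, is the compatibility between $\psi$ and evaluation at the canonical points $\F(\kappa_p)$: constructively the pullback of $S_A$ along $\kappa_p$ computes the \emph{double negation} $\neg\neg(p\in A)$ rather than $p\in A$, so a priori the round trip $\Omega^P\to\Sub{P\cdot\one}^\op\to\Omega^P$ is the double-negation nucleus rather than the identity. Weak excluded middle, equivalently de Morgan's law, is precisely what tames this: it forces the complemented (double-negation-stable) part of $\Omega^P$ to behave well, so that the image of $q^*$ sits correctly and $q$ is dense. I expect this verification---equivalently, the identification of $\F(P\cdot\one)$ with the localic Stone--\v{C}ech compactification $\beta(P_d)$, under which $q$ becomes the canonical extension of $\epsilon\colon P_d\to Z$---to be the crux; once it is in place, density implies surjectivity and the closed-sublocale case are purely formal.
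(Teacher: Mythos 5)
Your overall reduction is the paper's: by Lemma~\ref{l:cover-essential-image} and bijectivity on subobjects it suffices to cover every spatial $Z$, closed sublocales then come for free, and the covering object is $\coprod_{\pts Z}\one$. The gap is in the construction of the covering map. You define $q$ by $q^*=\psi\circ\epsilon^*$, where $\psi\colon\Omega^P\to\Sub{\coprod_P\one}^{\op}$ is claimed to be a \emph{frame} homomorphism determined by $A\mapsto S_A$. No such frame homomorphism exists in general, already classically: as written the assignment $A\mapsto S_A$ is order-\emph{reversing} into $\Sub{\cdot}^{\op}$, and the corrected version $A\mapsto i_{A^c}$ (the paper's $\alpha$) preserves finite meets and joins but not arbitrary joins. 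For instance, in $\KHaus$ with $P=\mathbb{N}$, so that $\coprod_P\one=\beta\mathbb{N}$ and $\Sub{\beta\mathbb{N}}^{\op}$ is the frame of closed sets under reverse inclusion, take $A_n=\{0,\dots,n\}$: then $\alpha\left(\bigcup_n A_n\right)=\overline{\emptyset}=\emptyset$, whereas $\bigsqcup_n\alpha(A_n)=\bigcap_n\overline{\{n+1,n+2,\dots\}}=\beta\mathbb{N}\setminus\mathbb{N}\neq\emptyset$. Hence $q^*$ is not a frame homomorphism and $q$ is not a localic map. (Morally: a frame homomorphism $\Omega^P\to O\F(\coprod_P\one)$ compatible with the points would retract the compact locale $\F(\coprod_P\one)\cong\beta(P_d)$ onto the discrete locale $P_d$.) A secondary constructive issue: for a general $A\in\Omega^P$ the subobject $S_A$ is \emph{not} complemented with complement $S_{P\setminus A}$; Lemma~\ref{l:complemented-subobj} gives this only for decidable subsets, and $\WEM$ only guarantees that $A^c$ is decidable --- which is exactly why the paper uses $i_{A^c}$ rather than $i_A$.

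The repair is the remark you relegate to a parenthesis at the end: one must factor not through the discrete locale $P_d$ but through its Stone--\v{C}ech compactification. The paper extends the point-surjection $\pi\colon\coprod_{\pts Z}\mho\to Z$ to $\varpi\colon\beta(\coprod_{\pts Z}\mho)\repi Z$, identifies $O(\beta(\coprod_{\pts Z}\mho))$ with a subframe of $\Idl(\P(P))$, and then constructs an \emph{injective frame homomorphism} $\Idl(\P(P))\mono\Sub{\coprod_P\one}^{\op}$ by first checking that $\alpha(A)\coloneqq i_{A^c}$ is a \emph{lattice} homomorphism on $\P(P)$ (this is where $\WEM$ and de Morgan's law are used) and then extending it \emph{freely} along the unit $\P(P)\to\Idl(\P(P))$; injectivity follows from $\alpha_*(0)=\{\emptyset\}$ via Lemma~\ref{l:dense-implies-surjective}. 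So the crux is not, as you suggest, a double-negation bookkeeping issue about pullbacks along $\kappa_p$: $\WEM$ is what makes $\alpha$ a lattice homomorphism, but the failure of $\alpha$ to preserve infinite joins is a genuine obstruction that is resolved only by passing to the ideal completion, a step your proposal omits.
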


In the remainder of this section, we offer a proof of Theorem~\ref{th:spatial-locales-covered}. Fix a pretopos~$\K$ satisfying the assumptions of the aforementioned result and recall from Theorem~\ref{t:Sub-pretopos-morphism} that the functor $\F$ is a pretopos embedding, and is bijective on subobjects by Lemma~\ref{l:full-on-subobjects}. Since subobjects in $\CHLoc$ can be identified with closed sublocales, if the essential image of $\F$ contains all spatial compact Hausdorff locales then it also contains their closed sublocales. Thus, in view of Lemma~\ref{l:cover-essential-image}, it suffices to show that $\F$ covers all spatial compact Hausdorff locales.
Fix an arbitrary spatial compact Hausdorff locale $X$. If $\pts X$ denotes the set of points of $X$, the canonical map
\[
\pi\colon\coprod_{\pts X}{\mho} \to X
\]
from the ($\pts X$)-fold copower in $\Loc$ of the terminal locale $\mho$ is a localic surjection. Hence its unique extension $\varpi$ to the Stone-\v{C}ech compactification of $\coprod_{\pts X}{\mho}$, induced by the universal property of the left adjoint $\beta\colon \Loc\to\CHLoc$, is also a localic surjection.
\[\begin{tikzcd}
\displaystyle\coprod_{\pts X}{\mho} \arrow[twoheadrightarrow]{dr}[swap]{\pi} \arrow{r} & \beta\left(\displaystyle\coprod_{\pts X}{\mho}\right) \arrow[twoheadrightarrow]{d}{\varpi} \\
{} & X
\end{tikzcd}\]
Consider the copower $\coprod_{\pts X}{\one}$ in $\K$. We will prove that there exists a localic surjection
\begin{equation}\label{eq:main-localic-surj}
\F\left(\coprod_{\pts X}{\one}\right) \repi \beta\left(\coprod_{\pts X}{\mho}\right).
\end{equation}
Composing the latter with $\varpi$, we will obtain a localic surjection $\F(\coprod_{\pts X}{\one}) \repi X$, thus showing that the functor $\F$ covers $X$.

The frame corresponding to the locale $\coprod_{\pts X}{\mho}$ is the power-set $\P(\pts X)$. Recall from Section~\ref{s:frames-of-ideals} that 
\[
O\left(\beta\left(\coprod_{\pts X}{\mho}\right)\right)
\] 
can be identified with a subframe of $\Idl(\P(\pts X))$.\footnote{Note that $\P(\pts X)$ need \emph{not} be a Boolean algebra, as we are not assuming the principle of excluded middle. Hence, $O(\beta(\coprod_{\pts X}{\mho}))$ may be a proper subframe of $\Idl(\P(\pts X))$.}
Thus, in order to establish the existence of a localic surjection as in eq.~\eqref{eq:main-localic-surj}, it is enough to exhibit an injective frame homomorphism
\[
\Idl(\P(\pts X)) \mono \Sub{\coprod_{\pts X}{\one}}^\op.
\]
More generally, for any set $S$ we will construct an injective frame homomorphism 
\begin{equation*}
\Idl(\P(S)) \mono \Sub{\coprod_{S}{\one}}^\op.
\end{equation*}

The strategy is as follows: we shall define a lattice homomorphism
\[
\alpha\colon \P(S) \to \Sub{\coprod_{S}{\one}}^\op
\]
and show that its unique extension to a frame homomorphism
\[
\alpha^*\colon \Idl(\P(S)) \to \Sub{\coprod_{S}{\one}}^\op
\]
is injective.
In order to define the map $\alpha$, we use the following lemma.
Recall that, in any extensive category, the coproduct maps are monomorphisms (see e.g.\ \cite[Proposition~2.6]{CLW93}).
In particular, for each $x\in S$, the coproduct map 
\[
i_x\colon \one \to \coprod_{S}{\one}
\]
is a monomorphism and we shall identify it with a subobject of $\coprod_{S}{\one}$.

\begin{lemma}\label{l:complemented-subobj}
The following statements hold for any complemented ${P\in \P(S)}$:\footnote{A complemented element of $\P(S)$ is usually called a \emph{decidable subset} of $S$.}
\begin{enumerate}[label=\textup{(}\alph*\textup{)}]
\item\label{i:iP-supremum} The obvious morphism 
\[
i_P\colon \coprod_{P}{\one}\to \coprod_{S}{\one}
\] 
 is a monomorphism and $i_P = \displaystyle\bigvee{\{i_x\mid x\in P\}}$ in $\Sub{\coprod_{S}{\one}}$. In particular, the top element of $\Sub{\coprod_{S}{\one}}$ coincides with the supremum of the set $\{i_x\mid x\in S\}$.
 \item\label{i:iP-complemented} $i_P$ is a complemented element of $\Sub{\coprod_{S}{\one}}$ with complement $i_{P^c}$.
 \item\label{i:complemented-extensionality} $P = \{x\in S \mid i_x\leq i_P \text{ in } \Sub{\coprod_{S}{\one}}\}$.
 \end{enumerate}
\end{lemma}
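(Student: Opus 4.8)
The plan is to reduce all three statements to the decomposition of the copower induced by decidability of $P$. Since $P$ is complemented in $\P(S)$, the set $S$ splits as a disjoint union $S\cong P + P^c$, and as the copower $\coprod_{(-)}\one$ turns coproducts of indexing sets into coproducts in $\K$, this yields an isomorphism
\[
\coprod_{S}\one \cong \coprod_{P}\one + \coprod_{P^c}\one
\]
under which $i_P$ and $i_{P^c}$ are precisely the two coproduct injections. I would use this identification throughout.

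For \ref{i:iP-supremum}, monicity of $i_P$ is then immediate, since coproduct injections are monomorphisms in any extensive category (as recalled just before the lemma). To show $i_P = \bigvee\{i_x \mid x\in P\}$ I would verify the universal property of the join directly, rather than presuppose that infinite joins exist. Writing $\iota_x\colon \one\to\coprod_{P}\one$ for the coproduct injections, each $i_x$ with $x\in P$ factors as $i_P\cdot\iota_x$, so $i_x\leq i_P$ and $i_P$ is an upper bound. If $m\colon T\mono \coprod_{S}\one$ is any upper bound, then for each $x\in P$ the inequality $i_x\leq m$ yields a unique factorisation $\one\to T$; by the universal property of $\coprod_{P}\one$ these assemble into a morphism $h\colon \coprod_{P}\one\to T$, and $m\cdot h$ agrees with $i_P$ on every $\iota_x$, whence $m\cdot h = i_P$ and $i_P\leq m$. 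Thus $i_P$ is the least upper bound. The final clause follows by taking $P=S$ (complemented with complement $\emptyset$): then $i_S$ is the identity, i.e.\ the top subobject, and the join formula specialises to the asserted one.

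For \ref{i:iP-complemented} I would invoke the two defining properties of a coproduct in a pretopos. Disjointness (positivity) gives $i_P\wedge i_{P^c}=\zero$, as the pullback of the two injections is initial. For the join, the copairing $[i_P,i_{P^c}]$ is the identity of $\coprod_{S}\one$ under the decomposition above, so its image---which computes $i_P\vee i_{P^c}$ in the coherent category $\K$---is the top subobject. Hence $i_{P^c}$ is a complement of $i_P$ in $\Sub{\coprod_{S}\one}$.

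Statement \ref{i:complemented-extensionality} is where the constructive hypotheses genuinely come into play, and I expect it to be the main obstacle. The inclusion $P\subseteq\{x\mid i_x\leq i_P\}$ is exactly part \ref{i:iP-supremum}. For the converse, suppose $i_x\leq i_P$. Decidability of $P$ gives $x\in P$ or $x\in P^c$, and I must rule out the second alternative. If $x\in P^c$ then $i_x\leq i_{P^c}$, so by \ref{i:iP-complemented}
\[
i_x \leq i_P\wedge i_{P^c}=\zero,
\]
forcing the subobject $i_x$ to be bottom and hence $\one\cong\zero$ (the domain of $i_x$ being $\one$, with $\zero$ strict). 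This contradicts non-triviality of $\K$, which forces $\one\not\cong\zero$: were $\one\cong\zero$, the lattice $\Sub{\one}$ would be a singleton and $\Sub{\one}^\op$ the one-element frame, which is not positive. Therefore $x\in P$, completing the argument.
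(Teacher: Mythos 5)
Your proof is correct and follows essentially the same route as the paper's: decompose $\coprod_{S}\one$ as the coproduct of $\coprod_{P}\one$ and $\coprod_{P^c}\one$ using decidability of $P$, deduce \ref{i:iP-supremum} and \ref{i:iP-complemented} from disjointness and the computation of unions via coproducts, and obtain \ref{i:complemented-extensionality} by showing that $i_x\leq i_P\wedge i_{P^c}=0$ is impossible. The only differences are cosmetic: you verify the least-upper-bound property of $i_P$ by hand where the paper invokes the image-factorisation description of joins, and you make explicit the appeal to non-triviality (ruling out $\one\cong\zero$) that the paper leaves implicit in writing $0<i_x$.
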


\begin{proof}
\ref{i:iP-supremum} Note that $P\cap P^c=\emptyset$ and, since $P$ is complemented, also $P\cup P^c =S$. Thus, the following is a coproduct diagram in $\K$.
\[\begin{tikzcd}
\displaystyle\coprod_{P}{\one} \arrow{r}{i_P} & \displaystyle\coprod_{S}{\one} & \displaystyle\coprod_{P^c}{\one} \arrow{l}[swap]{i_{P^c}}
\end{tikzcd}\] 
As coproduct maps in $\K$ are monomorphisms, $i_P$ is a monomorphism (and so is $i_{P^c}$). Since the supremum in $\Sub{\coprod_{S}{\one}}$ of the set $\{i_x\mid x\in P\}$ is computed by taking the (regular epi, mono) factorisation of $i_P$, we see that $i_P = \displaystyle\bigvee{\{i_x\mid x\in P\}}$. 

\ref{i:iP-complemented} Because binary coproducts in an extensive category are disjoint, $i_P \wedge i_{P^c}=0$. Moreover, $i_P \vee i_{P^c}=1$ because, in a coherent category, the supremum of two disjoint subobjects coincides with their coproduct (see e.g.\ \cite[Corollary~A.1.4.4]{el1}).

\ref{i:complemented-extensionality} We need to prove that, for all $x\in S$,
\[
x\in P \ \Longleftrightarrow \ i_x \leq i_P.
\]
As $i_P = \displaystyle\bigvee{\{i_x\mid x\in P\}}$ by item~\ref{i:iP-supremum}, the left-to-right implication is immediate. For the other direction, suppose $i_x \leq i_P$. Since $P$ is complemented, it suffices to show that $x\notin P^c$. If $x\in P^c$ then $i_x\leq i_{P^c}$ and so, in $\Sub{\coprod_{S}{\one}}$, we have
\[
0< i_x \leq i_P \wedge i_{P^c} = 0. 
\]
Therefore, $x\notin P^c$.
\end{proof}

By virtue of the principle of weak excluded middle, for each $P\in \P(S)$, the set $P^c$ is a complemented element of $\P(S)$ with complement $P^{cc}$. Therefore, Lemma~\ref{l:complemented-subobj}\ref{i:iP-supremum} tells us that the following map is well defined:
\[
\alpha\colon \P(S) \to \Sub{\coprod_{S}{\one}}^\op, \ \ \alpha(P) \coloneqq i_{P^c}.
\]

\begin{lemma}[$\WEM$]
The map $\alpha$ is a lattice homomorphism.
\end{lemma}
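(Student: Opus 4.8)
The plan is to factor $\alpha$ through complementation and exploit the fact that the codomain carries the \emph{opposite} order $\sleq$. Write $D$ for the collection of complemented (decidable) elements of $\P(S)$; this is a Boolean subalgebra of $\P(S)$, intuitionistically. By the principle of weak excluded middle, $P^c\in D$ for every $P\in\P(S)$, so $\alpha$ decomposes as
\[
\P(S)\xrightarrow{\ (-)^c\ } D \xrightarrow{\ i_{(-)}\ } \Sub{\coprod_{S}{\one}},
\]
where $i_{(-)}$ sends a decidable $A$ to $i_A$. Since complementation is order-reversing and the target of $\alpha$ is ordered by $\sleq$, one already expects the composite to be order-preserving; the real task is to promote this to preservation of the lattice operations, with careful bookkeeping of which operation of $\Sub{\coprod_{S}{\one}}^\op$ each set-theoretic operation is matched with.

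First I would show that $i_{(-)}\colon D\to \Sub{\coprod_{S}{\one}}$ is a homomorphism of Boolean algebras. It preserves bottom and top, since $i_\emptyset$ is the least subobject $0$ (as $\coprod_{\emptyset}{\one}=\zero$ is strict initial) and $i_S=1$ by Lemma~\ref{l:complemented-subobj}\ref{i:iP-supremum}. It preserves finite joins: for $A,B\in D$ the union $A\cup B$ is again decidable, and the description $i_A=\bigvee\{i_x\mid x\in A\}$ from Lemma~\ref{l:complemented-subobj}\ref{i:iP-supremum}, together with $\{i_x\mid x\in A\cup B\}=\{i_x\mid x\in A\}\cup\{i_x\mid x\in B\}$, gives $i_{A\cup B}=i_A\vee i_B$. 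Finally it preserves complements, since Lemma~\ref{l:complemented-subobj}\ref{i:iP-complemented} yields $\neg i_A=i_{A^c}$. As any map between Boolean algebras preserving $0$, $1$, finite joins and complements automatically preserves meets, $i_{(-)}$ is a lattice homomorphism.

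With this in hand, preservation of the operations by $\alpha$ reduces to the behaviour of complementation. For joins, $(P\cup Q)^c=P^c\cap Q^c$ holds intuitionistically, so $\alpha(P\cup Q)=i_{P^c\cap Q^c}=i_{P^c}\wedge i_{Q^c}=\alpha(P)\wedge\alpha(Q)$, and $\wedge$ is exactly the join $\svee$ of $\Sub{\coprod_{S}{\one}}^\op$. For meets, $(P\cap Q)^c=P^c\cup Q^c$—and this is precisely the de Morgan law where weak excluded middle is used—so $\alpha(P\cap Q)=i_{P^c\cup Q^c}=i_{P^c}\vee i_{Q^c}=\alpha(P)\vee\alpha(Q)$, which is the meet $\swedge$ of the opposite lattice. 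Combined with $\alpha(\emptyset)=i_S=1$ and $\alpha(S)=i_\emptyset=0$, matching the bottom and top of $\Sub{\coprod_{S}{\one}}^\op$, this exhibits $\alpha$ as a homomorphism of bounded lattices.

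I expect the delicate points to be twofold. First, the order reversal: one must consistently match $\cap$ and $\cup$ with the correct operations of the opposite lattice, and it is the order-reversing nature of $(-)^c$ that makes this come out right. Second, isolating exactly where weak excluded middle enters—namely, to guarantee that $P^c$ is decidable (so that $\alpha$ is even well defined through Lemma~\ref{l:complemented-subobj}) and to supply the single de Morgan identity $(P\cap Q)^c=P^c\cup Q^c$, every other ingredient, including all facts about $D$ and the dual identity $(P\cup Q)^c=P^c\cap Q^c$, being intuitionistically valid. A minor technical subtlety is that preservation of meets by $i_{(-)}$ is not directly visible from the supremum description in Lemma~\ref{l:complemented-subobj}\ref{i:iP-supremum}; it is cleanest to obtain it, as above, from the Boolean-homomorphism argument rather than by a separate computation.
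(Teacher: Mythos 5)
Your proof is correct and follows essentially the same route as the paper: the single use of weak excluded middle is the de Morgan identity $(P\cap Q)^c=P^c\cup Q^c$, the supremum description and complementation facts come from Lemma~\ref{l:complemented-subobj}, and the remaining lattice operation is obtained for free from the fact that the image lies in the Boolean center. The only difference is organisational: you transfer the ``Boolean codomain gives the other operation automatically'' trick to the map $A\mapsto i_A$ on decidable subsets, whereas the paper applies it directly to $\alpha$, observing that a map from a Heyting algebra to a Boolean algebra preserving binary infima and pseudocomplements also preserves binary suprema.
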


\begin{proof}
The bounds $\emptyset, S\in \P(S)$ are complements of each other and easily seen to be preserved.
If $P,Q\in \P(S)$, then 
\begin{align*}
\alpha(P \cap Q) &= i_{(P\cap Q)^c} \\
& = i_{P^c \cup Q^c} \tag*{de Morgan's law} \\
&= \bigvee{\{i_x \mid x\in P^c \cup Q^c\}} \tag*{Lemma~\ref{l:complemented-subobj}\ref{i:iP-supremum}} \\
&= \bigvee{\{i_x \mid x\in P^c\}} \vee  \bigvee{\{i_x \mid x\in Q^c\}} \\
& = \alpha(P) \swedge \alpha(Q). \tag*{Lemma~\ref{l:complemented-subobj}\ref{i:iP-supremum}}
\end{align*}
Moreover, an application of Lemma~\ref{l:complemented-subobj}\ref{i:iP-complemented} yields, for all $P\in \P(S)$,
\[
\alpha(P^c) = i_{P^{cc}} = \neg i_{P^c} = \neg \alpha(P),
\]
showing that $\alpha$ preserves pseudocomplements. To conclude, it suffices to observe that, for any map $h\colon A\to B$ from a Heyting algebra $A$ to a Boolean algebra $B$, if $h$ preserves binary infima and pseudocomplements then it also preserves binary suprema. 
Since the image of $\alpha$ is contained in the Boolean center of $\Sub{\coprod_{S}{\one}}^\op$ by Lemma~\ref{l:complemented-subobj}\ref{i:iP-complemented}, it follows that $\alpha$ preserves binary suprema and thus is a lattice homomorphism.
\end{proof}

Denote by 
\[
\alpha^*\colon \Idl(\P(S)) \to \Sub{\coprod_{S}{\one}}^\op
\]
the unique frame homomorphism, induced by the universal property of the left adjoint $\Idl\colon \DLat\to \Frm$, extending the lattice homomorphism $\alpha$. For all ideals $J\in \Idl(\P(S))$,
\[
\alpha^*(J) = \bigsvee{\{\alpha(P)\mid P\in J\}}. 
\]
The right adjoint $\alpha_*$ of $\alpha^*$ is given by
\[
\alpha_*\colon \Sub{\coprod_{S}{\one}}^\op\to \Idl(\P(S)), \ \ \alpha_*(T) = \{P\in\P(S) \mid i_{P^c} \sleq T\}
\]
and satisfies
\begin{align*}
\alpha_*(0) &= \{P\in\P(S) \mid  i_{P^c} \sleq 0\} \\
&= \{P\in\P(S) \mid  \forall x\in S, \ i_x \leq i_{P^c}\} \tag*{Lemma~\ref{l:complemented-subobj}\ref{i:iP-supremum}} \\
&= \{P\in\P(S) \mid  P^c = S\} \tag*{Lemma~\ref{l:complemented-subobj}\ref{i:complemented-extensionality}} \\
&= \{\emptyset\},
\end{align*}
which is the least element of $\Idl(\P(S))$. Therefore, $\alpha^*\colon \Idl(\P(S)) \to \Sub{\coprod_{S}{\one}}^\op$ is an injective frame homomorphism by Lemma~\ref{l:dense-implies-surjective}. This concludes the proof of Theorem~\ref{th:spatial-locales-covered}.

\section{The case of compact Hausdorff spaces}\label{s:CH-spaces}
In this section we show how to derive Marra and Reggio's characterisation of the category of compact Hausdorff spaces, stated in Theorem~\ref{th:mr-main}, from Theorem~\ref{th:spatial-locales-covered}. Note that, under the assumptions of the latter theorem, the pretopos embedding ${\F \colon \K \to \CHLoc}$ is a weak equivalence provided that compact Hausdorff locales have enough points. This inevitably leads us to classical logic; indeed, we have the following known fact:

\begin{lemma}\label{l:enough-points-lem}
The following statements are equivalent in the logic of any topos:
\begin{enumerate}[label=\textup{(}\arabic*\textup{)}]
\item\label{i:CH-enough-points} Compact Hausdorff locales have enough points.
\item\label{i:Stone-enough-points} Stone locales have enough points.
\item\label{i:Stone-repr-thm} The Stone representation theorem for Boolean algebras\footnote{This asserts that every Boolean algebra is isomorphic to a subalgebra of $\P(S)$ for some set $S$.} holds.
\end{enumerate}
Moreover, each of the previous equivalent properties implies the law of excluded middle.
\end{lemma}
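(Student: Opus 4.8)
The plan is to prove the cycle of equivalences \ref{i:CH-enough-points} $\Leftrightarrow$ \ref{i:Stone-enough-points} $\Leftrightarrow$ \ref{i:Stone-repr-thm} and then to derive the law of excluded middle from \ref{i:CH-enough-points}; since all three are equivalent, each will imply excluded middle. The implication \ref{i:CH-enough-points} $\Rightarrow$ \ref{i:Stone-enough-points} is immediate, as Stone locales are compact Hausdorff. For the converse I would exploit filtrality of $\CHLoc$: every compact Hausdorff locale $X$ is the image of a Stone locale under a localic surjection $f\colon Z \repi X$ (e.g.\ a Gleason cover). Enough points transfers along such a surjection: given $u \not\le v$ in $OX$, the fact that $f^*$ reflects the order yields $f^* u \not\le f^* v$, so any point $q$ of $Z$ separating $f^* u$ and $f^* v$ produces the point $f \circ q$ of $X$ separating $u$ and $v$, since $(f\circ q)^* = q^* \cdot f^*$. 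Hence spatiality of $Z$ forces spatiality of $X$, giving \ref{i:Stone-enough-points} $\Rightarrow$ \ref{i:CH-enough-points}.

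For \ref{i:Stone-enough-points} $\Leftrightarrow$ \ref{i:Stone-repr-thm} I would pass through the duality between Boolean algebras and Stone locales (Remark~\ref{rem:localic-Stone-duality}), under which the Stone locale attached to a Boolean algebra $B$ is $\Idl(B)$. A point of $\Idl(B)$ is a frame homomorphism $\Idl(B)\to\Omega$, which by the adjunction $\Idl \dashv {-}$ corresponds to a lattice homomorphism $B \to \Omega$; as $B$ is Boolean, such a map automatically lands in the Boolean center $\B(\Omega)$ of complemented truth values, i.e.\ it is a $\B(\Omega)$-valued ultrafilter. Restricting the separating property of the points of $\Idl(B)$ to the principal ideals identifies ``$\Idl(B)$ has enough points'' with injectivity of the canonical map $B \to \P(\pts(\Idl(B)))$, that is, with $B$ embedding into a power set. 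I expect the delicate direction to be \ref{i:Stone-repr-thm} $\Rightarrow$ \ref{i:Stone-enough-points}: one must check that a representation $B \into \P(S)$ supplies enough points to separate not merely the \emph{elements} of $B$ but all of its \emph{ideals}. Classically this is exactly the equivalence of Stone representation with the Boolean prime ideal theorem, and reconciling the two separation conditions constructively is the main obstacle of the proof.

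Finally, to see that \ref{i:CH-enough-points} implies excluded middle, fix a proposition $P$ and consider the closed sublocale $C_P \into \mho$ of the terminal locale given by the nucleus $\mathfrak{c}_P = P \vee {-}$, so that $O C_P \cong {\up} P = \{q\in\Omega \mid q \ge P\}$ with bottom $P$ and top $\top$; this is a compact Hausdorff locale. Since $\mho$ is terminal, $\Omega$ is initial in $\Frm$ and $\mho$ has a unique point; a short computation (any point of $C_P$ is a section of the frame quotient $q\mapsto P\vee q$) shows that this point factors through $C_P$ precisely when $P=\bot$. Thus $C_P$ has a point if and only if $\neg P$, and has no point if and only if $\neg\neg P$. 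Applying the defining property of enough points to the pair $(\top, P)$: the hypothesis that every point $p$ of $C_P$ satisfies $p^*(\top)\le p^*(P)$ reduces to $\top \le \bot$ at each point and so holds \emph{vacuously} exactly when $C_P$ has no point, i.e.\ when $\neg\neg P$; the conclusion $\top \le P$ is just $P$. Hence enough points for $C_P$ yields $\neg\neg P \Rightarrow P$ for every $P$, which is double-negation elimination and therefore excluded middle. These witnesses $C_P$ also account for the Remark preceding the lemma, since for undecidable $P$ the locale $C_P$ is a compact Hausdorff (though not necessarily Stone) locale lacking enough points.
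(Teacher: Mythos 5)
Your overall architecture matches the paper's: \ref{i:CH-enough-points} $\Leftrightarrow$ \ref{i:Stone-enough-points} via covering by Stone locales, and \ref{i:Stone-enough-points} $\Leftrightarrow$ \ref{i:Stone-repr-thm} via the identification of points of $\Idl(B)$ with lattice homomorphisms $B\to\Omega$ (so that separation of principal ideals is exactly injectivity of $B\to \P(\pts(\Idl(B)))$, giving \ref{i:Stone-enough-points} $\Rightarrow$ \ref{i:Stone-repr-thm}). Two remarks on this part. First, a genuine gap: you explicitly leave the direction \ref{i:Stone-repr-thm} $\Rightarrow$ \ref{i:Stone-enough-points} unresolved, and it is a real issue, since knowing that the points separate \emph{elements} of $B$ does not formally yield that they separate arbitrary \emph{ideals}: from $f(a)\leq\bigvee_{b\in J}f(b)$ for all lattice homomorphisms $f\colon B\to\Omega$ one cannot directly extract a single $b\in J$ with $a\leq b$. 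The paper sidesteps this by citing Townsend's result that spatiality of $\Idl(B)$ is equivalent to injectivity of the canonical map $B\to\prod_{\DLat(B,\Omega)}\Omega$. If you want a self-contained repair, note that the Stone representation theorem is assumed for \emph{all} Boolean algebras: given an ideal $J$ and $a\in B$ with $f(a)\leq\bigvee_{b\in J}f(b)$ for every $f$, pass to the quotient $B/J$; every lattice homomorphism $g\colon B/J\to\Omega$ lifts to an $f$ annihilating $J$, for which the hypothesis gives $f(a)=0$, so injectivity of $B/J\to\P(S)$ forces $a\in J$. Second, a smaller point: your transfer of spatiality along a surjection $f\colon Z\repi X$ is phrased contrapositively (``given $u\not\leq v$ \dots\ any point separating them''), which is a classically equivalent but constructively stronger form of ``enough points''; it should be run positively (if every point of $X$ sends $u$ below $v$, then every point of $Z$ sends $f^*u$ below $f^*v$, hence $f^*u\leq f^*v$, hence $u\leq v$ since $f^*$ reflects order). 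The repaired argument is fine.

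Your derivation of excluded middle is correct and is a genuinely different route from the paper's, which simply invokes Bell's theorem that the Stone representation theorem implies excluded middle. You instead argue directly from \ref{i:CH-enough-points}: the closed sublocale $C_P$ of $\mho$ with $OC_P=\up P$ is compact Hausdorff, its set of points is inhabited precisely when $\neg P$ holds (the unique point of $\mho$ factors through $\mathfrak{c}_P$ iff $P=\bot$), and applying order-reflection of $OC_P\to\P(\pts C_P)$ to the pair $(\top,P)$ yields exactly $\neg\neg P\Rightarrow P$. This buys a short, self-contained proof that also exhibits explicit non-spatial compact Hausdorff locales in any non-Boolean topos, whereas the paper's route keeps the logical strength localised in the cited representation theorem. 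Once the $(3)\Rightarrow(2)$ gap above is closed, the proof is complete.
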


\begin{proof}
Every compact Hausdorff locale is covered by a Stone locale, thus \ref{i:CH-enough-points} $\Leftrightarrow$ \ref{i:Stone-enough-points}.

In \cite[\S 1.8]{t}, Townsend showed that the existence of enough points for Stone locales is equivalent, in the logic of a topos, to the following statement: For every Boolean algebra $B$ and every $a\in B$, if $f(a)=0$ for all lattice homomorphisms $f \colon B \to \Omega$, then $a=0$. 
That is, the canonical lattice homomorphism 
\[
B\to \prod_{\DLat(B,\Omega)}{\Omega}
\] 
is injective.
In turn, the latter statement is equivalent to the Stone representation theorem for Boolean algebras, and so \ref{i:Stone-enough-points} $\Leftrightarrow$ \ref{i:Stone-repr-thm}.

Finally, Bell showed in \cite[Theorem~3.2]{Bell1999} that the Stone representation theorem for Boolean algebras constructively implies the law of excluded middle.
\end{proof}

\begin{remark}
Classically, the existence of enough points for compact Hausdorff locales can be derived from the prime ideal theorem for distributive lattices (equivalently, for Boolean algebras). A constructive version of the latter principle, the \emph{constructive prime ideal theorem} (CPIT), was introduced by Townsend~\cite{t} and states that, for any distributive lattice $D$ and $a\in D$, if $f(a)=0$ for all lattice homomorphisms $f \colon D \to \Omega$, then $a=0$. Constructively, CPIT is equivalent to the existence of enough points for compact Hausdorff locales; moreover, in the presence of the law of excluded middle, it is equivalent to the prime ideal theorem for distributive lattices. See \cite[\S 1.8]{t}. Thus, by Lemma~\ref{l:enough-points-lem}, every topos satisfying CPIT is Boolean and satisfies the prime ideal theorem.
\end{remark}

Even in the presence of excluded middle, in order to derive Theorem~\ref{th:mr-main} from Theorem~\ref{th:spatial-locales-covered} we need to show that compatible filtrality follows from the remaining assumptions. 
To this end, borrowing terminology from topos theory, let us say that an object of a category is an \emph{atom} if it has precisely two (distinct) subobjects.
We shall see that a filtral category with enough subobjects is compatibly filtral whenever its terminal object $\one$ is an atom (Proposition~\ref{p:one-atom-comp-filtral}). 
We start with a technical lemma:
\begin{lemma}\label{l:products-lemma-rectangles}
Consider a product diagram
\[\begin{tikzcd}
X_1 & X_1 \times X_2 \arrow{l}[swap]{\pi_1} \arrow{r}{\pi_2} & X_2
\end{tikzcd}\]
in a coherent category $\K$ whose terminal object is an atom. The following statements~hold:
\begin{enumerate}[label=\textup{(}\alph*\textup{)}]
\item\label{constant-0} $\forall \, U\in \Sub{X_1}^\op$, either $U= 1$ or $\pi_2 [\pi_1^{-1}(U)] = 0\in \Sub{X_2}^\op$.
\item\label{rectangles-simplified} If $\K$ is filtral, for all complemented elements $T_1 \in \Sub{X_1}^\op$ and $T_2 \in \Sub{X_2}^\op$,
    \[
    \pi_1^{-1}(T_1) \swedge \pi_2^{-1}(T_2) = 0 \ \Longleftrightarrow \ (T_1 = 0 \ \text{ or } \ T_2 = 0).
      \]
    \end{enumerate}
\end{lemma}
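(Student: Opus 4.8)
The plan is to translate both parts into statements about the ordinary subobject lattices $\Sub{X_i}$ ordered by inclusion $\leq$, keeping careful track of the order-duality: the bottom $0$ and top $1$ of the frame $\Sub{X}^\op$ are, respectively, the full subobject $X\mono X$ and the initial subobject $\zero\mono X$, while $\swedge,\svee$ correspond to $\vee,\wedge$ in $\Sub{X}$. In these terms, part~\ref{constant-0} asserts that for every subobject $U\mono X_1$ either $U=\zero$ or the image $\pi_2[\pi_1^{-1}(U)]$ is all of $X_2$.

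For part~\ref{constant-0} I would argue as follows. If $U$ is the initial subobject (that is, $U=1$ in $\Sub{X_1}^\op$) the first alternative holds, so assume $U\neq\zero$. The image $i\colon\,![U]\mono\one$ of the unique map $U\to\one$ cannot be $\zero$, since otherwise $U$ would admit a morphism to $\zero$ and hence be initial by strictness of $\zero$; as $\one$ is an atom, $![U]=\one$, so $U\repi\one$ is a regular epimorphism. Now $\pi_1^{-1}(U)$ is the subobject $U\times X_2\mono X_1\times X_2$ obtained by pulling back $U\mono X_1$ along the projection $\pi_1$, and the restriction of $\pi_2$ to it is the product projection $U\times X_2\to X_2$. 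The latter is the pullback of $U\repi\one$ along $X_2\to\one$, hence a regular epimorphism by pullback-stability of regular epimorphisms in the regular category $\K$; therefore its image $\pi_2[\pi_1^{-1}(U)]$ equals $X_2$, i.e.\ $0$ in $\Sub{X_2}^\op$.

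For part~\ref{rectangles-simplified}, the implication ``$\Leftarrow$'' is immediate, since $\pi_i^{-1}$ is a frame homomorphism and hence sends $0$ to $0$. For ``$\Rightarrow$'' I would reason inside the frame $\Sub{X_1\times X_2}^\op$. Assume $\pi_1^{-1}(T_1)\swedge\pi_2^{-1}(T_2)=0$ and $T_1\neq 0$; the goal is $T_2=0$. Joining both sides with $\pi_1^{-1}(T_1^c)$, where $T_1^c$ denotes the complement of $T_1$, and using frame distributivity together with $\pi_1^{-1}(T_1^c)\svee\pi_1^{-1}(T_1)=\pi_1^{-1}(T_1^c\svee T_1)=\pi_1^{-1}(1)=1$, yields $\pi_2^{-1}(T_2)\sleq\pi_1^{-1}(T_1^c)$. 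Applying the monotone right adjoint $\pi_2[-]$ of the frame homomorphism $\pi_2^{-1}$ gives $\pi_2[\pi_2^{-1}(T_2)]\sleq\pi_2[\pi_1^{-1}(T_1^c)]$. Since $T_1\neq 0$ forces $T_1^c\neq 1$, part~\ref{constant-0} applied to $T_1^c$ gives $\pi_2[\pi_1^{-1}(T_1^c)]=0$; combined with the unit inequality $T_2\sleq\pi_2[\pi_2^{-1}(T_2)]$ of the adjunction $\pi_2^{-1}\dashv\pi_2[-]$, this forces $T_2=0$, as required.

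The main obstacle I anticipate is almost entirely bookkeeping: tracking the order-duality so that $0,1,\swedge,\svee$ and the adjunction $\pi_2^{-1}\dashv\pi_2[-]$ are deployed on the correct side, and verifying that $\pi_1^{-1}(U)$ is genuinely the cylinder $U\times X_2$ whose $\pi_2$-image is controlled by a pullback of $U\repi\one$. It is worth noting that the essential input is the atom hypothesis, funnelled through part~\ref{constant-0}; filtrality enters only to ensure that $\Sub{-}^\op$ is a frame, so that the finite meets, joins, and complements above live in a distributive lattice, and in fact only $T_1$ needs to be complemented for the argument to run.
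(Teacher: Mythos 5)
Your argument follows the paper's proof quite closely in substance: part~\ref{constant-0} hinges on the dichotomy, supplied by the atom hypothesis, for the image of $U\to\one$, and part~\ref{rectangles-simplified} on joining the hypothesis with $\pi_1^{-1}(T_1^c)$ and distributing. Two of your local variations are genuine (and pleasant) simplifications. In \ref{constant-0} you replace the paper's Beck--Chevalley computation $\pi_2[\pi_1^{-1}(U)]={!}_2^{-1}{!}_1[U]$ by the direct observation that $\pi_1^{-1}(U)\to X_2$ is a pullback of the regular epimorphism $U\repi\one$, which is correct by pasting of pullbacks and pullback-stability of regular epimorphisms. In \ref{rectangles-simplified} you replace the paper's appeal to the Frobenius law for $\F(\pi_2)$ by the unit inequality $T_2\sleq\pi_2[\pi_2^{-1}(T_2)]$ of the adjunction $\pi_2^{-1}\dashv\pi_2[-]$, so that, as you note, filtrality and the complementedness of $T_2$ play no real role; this is also correct.

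The one genuine defect is logical rather than lattice-theoretic: both statements are disjunctions, and in both cases you prove them by assuming the negation of one disjunct (``assume $U\neq\zero$''; ``assume $T_1\neq 0$; the goal is $T_2=0$''). This establishes only $\neg A\Rightarrow B$, which yields $A\vee B$ classically but not in the internal logic of a topos, where this lemma is meant to hold (it is not marked $\WEM$, and equality of subobjects is not in general a decidable proposition, even for complemented ones). The repair is already implicit in your own argument: the atom hypothesis makes $\Sub{\one}$ literally a two-element set, so in \ref{constant-0} the case split should be performed on the image ${!}_1[U]\in\Sub{\one}$ --- either it is $\zero$, whence $U\cong\zero$ by strictness of the initial object and $U=1$ in $\Sub{X_1}^\op$, or it is $\one$, whence $U\to\one$ is a regular epimorphism and your pullback argument applies --- and in \ref{rectangles-simplified} one should invoke the disjunction of part~\ref{constant-0} for $U=T_1^c$ directly: the branch $T_1^c=1$ gives $T_1=0$, and the branch $\pi_2[\pi_1^{-1}(T_1^c)]=0$ gives $T_2=0$ by your adjunction computation. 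With the case analyses reorganised in this way, your proof is complete and matches the paper's.
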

\begin{proof}
\ref{constant-0} We prove the order-dual statement. 
Fix a subobject $m\colon U\mono X_1$ and let $U \repi V \mono \one$ be the (regular epi, mono) factorisation of the composite 
\[
!_1\cdot m\colon U\mono X_1 \to \one.
\]
Since $\one$ is an atom, either $V\cong \zero$ or $V\cong \one$. If $V\cong \zero$ we have $U \cong \zero$ because the initial object of a coherent category is strict, and so $U=0\in\Sub{X_1}$. It remains to show that if $V\cong \one$, then $\pi_2 [\pi_1^{-1}(U)] = 1\in \Sub{X_2}$.

Suppose $V\cong \one$. Then $!_1\cdot m$ is a regular epimorphism and so $!_1[U] = 1\in \Sub{\one}$. The map $!_2^{-1}\colon \Sub{\one}\to\Sub{X_2}$ preserves the top element because it is a bounded lattice homomorphism, hence $!_2^{-1} !_1[U] = 1\in \Sub{X_2}$. Now, consider the following pullback square:
\[\begin{tikzcd}
X_1 \times X_2 \arrow{r}{\pi_1} \arrow{d}[swap]{\pi_2} \arrow[dr, phantom, "\lrcorner", very near start] & X_1 \arrow{d}{!_1} \\
X_2 \arrow{r}{!_2} & \one
\end{tikzcd}\]
The Beck--Chevalley condition 
\[
\pi_2 [\pi_1^{-1}(-)] = {!}_2^{-1} !_1[-]\colon \Sub{X_1}\to\Sub{X_2}
\]
holds in any regular category (cf.\ e.g.\ \cite[pp.~101--102]{mrey}), and so $\pi_2 [\pi_1^{-1}(U)] = !_2^{-1} !_1[U] = 1$.

\ref{rectangles-simplified} The right-to-left direction is immediate, since the frame homomorphisms $\pi_1^{-1}$ and $\pi_2^{-1}$ preserve $0$. 
For the left-to-right direction, suppose that 
\[
\pi_1^{-1}(T_1)\swedge \pi_2^{-1}(T_2) = 0
\] 
and denote by $U\in\Sub{X_1}^\op$ the complement of $T_1$. We have
\begin{align}\label{eq:last-step}
\pi_1^{-1}(T_1)\swedge \pi_2^{-1}(T_2) & = 0 \nonumber \\ 
\Longrightarrow \ (\pi_1^{-1}(T_1)\svee \pi_1^{-1}(U)) \swedge (\pi_2^{-1}(T_2) \svee \pi_1^{-1}(U)) & = \pi_1^{-1}(U) \nonumber \\
\Longrightarrow \ \pi_1^{-1}(T_1\svee U) \swedge (\pi_2^{-1}(T_2) \svee \pi_1^{-1}(U)) & = \pi_1^{-1}(U) \nonumber \\
\Longrightarrow \ 1 \swedge (\pi_2^{-1}(T_2) \svee \pi_1^{-1}(U)) & = \pi_1^{-1}(U) \nonumber \\
\Longrightarrow \ \pi_2[\pi_2^{-1}(T_2) \svee \pi_1^{-1}(U)] & = \pi_2[\pi_1^{-1}(U)] \nonumber \\
\Longrightarrow \ T_2 \svee \pi_2[\pi_1^{-1}(U)] & = \pi_2[\pi_1^{-1}(U)]
\end{align}
where the last implication follows from the fact that, since $\K$ is filtral, the localic map 
\[
\pi_2[-] = \F(\pi_2) \colon \F(X_1\times X_2) \to \F(X_2)
\] 
is closed.
By item~\ref{constant-0}, either $U= 1$ or $\pi_2 [\pi_1^{-1}(U)] = 0$. If $U=1$ then $T_{1}=0$, whereas if $\pi_2 [\pi_1^{-1}(U)] = 0$, then eq.~\eqref{eq:last-step} implies that $T_2 = 0$.
\end{proof}

\begin{proposition}\label{p:one-atom-comp-filtral}
Let $\K$ be a filtral category with enough subobjects. If its terminal object is an atom, then $\K$ is compatibly filtral.
\end{proposition}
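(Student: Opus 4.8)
The plan is to verify Definition~\ref{def:comp-filtrality} directly. As recorded in the proof of Lemma~\ref{l:comp-filtral-rephrasing}, the identities $\B(L^\op)\cong \B(L)^\op$ and $(B_1+B_2)^\op\cong B_1^\op+B_2^\op$ show that the canonical homomorphism in eq.~\eqref{eq:BA-hom-inj-filt} is injective exactly when its order-dual
\[
\mu\colon \B(\Sub{S_1}^\op) + \B(\Sub{S_2}^\op) \to \B(\Sub{S_1 \times S_2}^\op)
\]
is. So it suffices to prove that $\mu$ is injective for arbitrary filtral objects $S_1,S_2\in\K$. Writing $\pi_i\colon S_1\times S_2\to S_i$ for the projections, the coproduct coprojections $\iota_i$ are carried by $\mu$ to the Boolean-center restrictions of the frame homomorphisms $\pi_i^{-1}\colon \Sub{S_i}^\op\to\Sub{S_1\times S_2}^\op$; in particular, $\mu$ sends a ``rectangle'' $\iota_1(T_1)\wedge\iota_2(T_2)$ to $\pi_1^{-1}(T_1)\swedge\pi_2^{-1}(T_2)$.

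Next I would use that a Boolean algebra homomorphism $h$ is injective precisely when it reflects the bottom element: since $h(x)=h(y)$ iff $h$ kills the symmetric difference $x\mathbin{\triangle} y$, and $x\mathbin{\triangle} y=0$ iff $x=y$, injectivity of $\mu$ reduces to showing that $\mu(w)=0$ implies $w=0$. Here I would invoke the standard normal form for coproducts in $\BA$: the finite joins of rectangles $\iota_1(a)\wedge\iota_2(b)$ are closed under finite meets, finite joins and complements and contain the two coprojections, hence exhaust the whole coproduct. Thus every $w$ can be written as $w=\bigvee_{k=1}^{n}\bigl(\iota_1(a_k)\wedge\iota_2(b_k)\bigr)$ with $a_k\in\B(\Sub{S_1}^\op)$ and $b_k\in\B(\Sub{S_2}^\op)$. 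Applying $\mu$ gives $\mu(w)=\bigsvee_{k=1}^{n}\bigl(\pi_1^{-1}(a_k)\swedge\pi_2^{-1}(b_k)\bigr)$, and because each joinand lies below the join, $\mu(w)=0$ forces $\pi_1^{-1}(a_k)\swedge\pi_2^{-1}(b_k)=0$ for every $k$.

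This is exactly where the hypotheses enter. Since $a_k$ and $b_k$ are complemented, Lemma~\ref{l:products-lemma-rectangles}\ref{rectangles-simplified} yields $a_k=0$ or $b_k=0$ for each $k$. In either case the corresponding coprojection annihilates the rectangle, so $\iota_1(a_k)\wedge\iota_2(b_k)=0$, whence $w=\bigvee_k 0=0$, as required. I note in passing that only the trivial direction ``$a_k=0$ or $b_k=0$ implies the rectangle vanishes'' is used, so no non-constructive reasoning about coproducts of Boolean algebras is needed. The real content of the argument is therefore concentrated in Lemma~\ref{l:products-lemma-rectangles}\ref{rectangles-simplified}, which is where the assumption that $\one$ is an atom---together with the closedness of the localic projections $\F(\pi_i)$---does the work; the remaining steps (the reduction to rectangles, the op-duality bookkeeping of Lemma~\ref{l:comp-filtral-rephrasing}, and the elementary fact that a finite join vanishes only when all joinands do) are routine. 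The only point demanding care is keeping the order-dualities straight so that Lemma~\ref{l:products-lemma-rectangles}\ref{rectangles-simplified} applies verbatim to the coprojections of $\mu$.
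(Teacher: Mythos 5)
Your proof is correct, and it takes a genuinely different route from the paper's. The paper first invokes Lemma~\ref{l:comp-filtral-rephrasing} to translate compatible filtrality into the surjectivity of the localic comparison map $p\colon \F(S_1\times S_2)\to\F(S_1)\times\F(S_2)$, and then verifies density, i.e.\ $p_*(0)=0$: it expands $p_*(0)$ as a join of rectangles $T_1\oplus T_2$ in the frame coproduct $\Sub{S_1}^\op+\Sub{S_2}^\op$, uses that each $T_i$ is a join of complemented elements (because $\F(S_i)$ is a Stone locale) to reduce to complemented rectangles, and concludes via Lemma~\ref{l:products-lemma-rectangles}\ref{rectangles-simplified} and Lemma~\ref{l:dense-implies-surjective}. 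You instead verify Definition~\ref{def:comp-filtrality} directly at the level of the Boolean algebra coproduct, using the normal form of its elements as finite joins of rectangles and the fact that a $\BA$-homomorphism is injective iff it reflects $0$. Both arguments concentrate all the real content in Lemma~\ref{l:products-lemma-rectangles}\ref{rectangles-simplified}, and in a sense you are proving the ``injectivity of $\mu$'' side of the equivalence recorded in Lemma~\ref{l:comp-filtral-rephrasing} directly where the paper proves the ``surjectivity of $p$'' side. What your version buys is self-containedness for this proposition: you avoid the ideal-functor and density machinery, and you never need the Stone-locale decomposition of a general $T_i$ into complemented elements, since in the $\BA$-coproduct everything is already built from complemented pieces. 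The one ingredient you import that the paper does not state is the normal form for coproducts of Boolean algebras; it is constructively unproblematic (the finite joins of rectangles form a subalgebra containing both coprojection images, and the subalgebra generated by those images is the whole coproduct by the universal property), but since the paper is careful about constructive validity you should say a word about this rather than calling it ``standard''. Like the paper's own proof, your argument does not visibly use the ``enough subobjects'' hypothesis, so its omission is not a gap.
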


\begin{proof}
Suppose $\one$ is an atom. In view of Lemma~\ref{l:comp-filtral-rephrasing}, in order to show that $\K$ is compatibly filtral it suffices to prove that, for all filtral objects $S_1,S_2\in \K$, the canonical map
\[
p\colon \F(S_1 \times S_2) \to  \F(S_1) \times \F(S_2)
\]
is a localic surjection.

Since $\CHLoc$ is closed under limits in the category of locales (see Section~\ref{s:frames-of-ideals}), $p^*$ is the unique frame homomorphism making the following diagram commute,
  \[\begin{tikzcd}[row sep = 2.5em]
      \Sub{S_1}^\op \arrow{r}{u_1} \arrow{dr}[swap]{\pi_1^{-1}} & \Sub{S_1}^\op + \Sub{S_2}^\op
      & \Sub{S_2}^\op \arrow{l}[swap]{u_2} \arrow{dl}{\pi_2^{-1}} \\
      {} & \Sub{S_1\times S_2}^\op 
      \arrow[<-,dashed]{u}[description]{p^*} &
    \end{tikzcd}\]
    where the top row is a coproduct diagram in $\Frm$.
   By Lemma~\ref{l:dense-implies-surjective}, $p$ is a localic surjection provided it is dense, i.e.\ $p_*(0)=0$.

Now, every element of the frame $\Sub{S_1}^\op + \Sub{S_2}^\op$ is a join of generators of the form $T_1\oplus T_2 = u_1(T_1) \wedge u_2(T_2)$ with $T_1\in \Sub{S_1}^\op$ and $T_2\in \Sub{S_2}^\op$ (see e.g.\ \cite[IV.5.2]{pp}). Note that, in the frame $\Sub{S_1\times S_2}^\op$, we have
\begin{equation}\label{eq:alphastar-generators}
p^*(T_1\oplus T_2) = p^*(u_1(T_1) \wedge u_2(T_2)) = \pi_1^{-1}(T_1) \swedge \pi_2^{-1}(T_2). 
\end{equation}
Hence, in the frame $\Sub{S_1}^\op + \Sub{S_2}^\op$,
\begin{align*}
p_*(0) &= \bigvee{\{T_1\oplus T_2 \mid T_i\in \Sub{S_i}^\op, \ T_1\oplus T_2 \leq p_*(0)\}} \\ 
&= \bigvee{\{T_1\oplus T_2 \mid T_i\in \Sub{S_i}^\op, \ p^*(T_1\oplus T_2) \sleq 0\}} \tag*{$p^*\dashv p_*$}\\ 
&= \bigvee{\{T_1\oplus T_2 \mid T_i\in \Sub{S_i}^\op, \ \pi_1^{-1}(T_1) \swedge \pi_2^{-1}(T_2) = 0\}}. \tag*{eq.~\eqref{eq:alphastar-generators}}
\end{align*}
For each $T_i\in \Sub{S_i}^\op$, consider the inhabited set
\[
\mathcal{U}_{T_i} \coloneqq \{U \in \Sub{S_i}^\op\mid U \sleq T_i \text{ and $U$ is complemented}\}.
\]
As each $S_i$ is a filtral object, $\F(S_i)$ is a Stone locale, and so $T_i = \displaystyle\bigsvee{\mathcal{U}_{T_i}}$. Using the fact that $\oplus$ distributes over suprema (see again \cite[IV.5.2]{pp}), we get
\begin{align*}
& \bigvee{\{T_1\oplus T_2 \mid T_i\in \Sub{S_i}^\op, \ \pi_1^{-1}(T_1) \swedge \pi_2^{-1}(T_2) = 0\}} \\
= & \bigvee{\{U_1\oplus U_2 \mid T_i\in \Sub{S_i}^\op, \ U_i\in \ \mathcal{U}_{T_i}, \ \pi_1^{-1}(\bigsvee{\mathcal{U}_{T_1}}) \swedge \pi_2^{-1}(\bigsvee{\mathcal{U}_{T_2}}) = 0\}} \\
= & \bigvee{\{U_1\oplus U_2 \mid T_i\in \Sub{S_i}^\op, \ U_i\in \ \mathcal{U}_{T_i}, \ \bigsvee{\{\pi_1^{-1}(U_1) \swedge \pi_2^{-1}(U_2)} \mid U_i\in \ \mathcal{U}_{T_i}\} = 0\}} \\
\leq & \bigvee{\{U_1\oplus U_2 \mid U_i\in \Sub{S_i}^\op, \ U_i \text{ is complemented}, \ \pi_1^{-1}(U_1) \swedge \pi_2^{-1}(U_2) = 0\}} \\
\leq & \bigvee{\{U_1\oplus U_2 \mid U_i\in \Sub{S_i}^\op, \ U_1= 0 \text{ or } U_2=0\}} \tag*{Lemma~\ref{l:products-lemma-rectangles}\ref{rectangles-simplified}}
\end{align*}
which is equal to $0$ because $U_1\oplus U_2 =0$ provided that at least one of $U_1$ and $U_2$ is $0$. Thus $p_*(0)=0$, which implies that $\K$ is compatibly filtral.
\end{proof}

\begin{remark}\label{rem:existence-K-em}
The existence of a non-trivial filtral category with enough subobjects such that its terminal object is an atom implies the law of excluded middle. 

To see this, note that if $\K$ is a filtral category satisfying the conditions above, then it is compatibly filtral by Proposition~\ref{p:one-atom-comp-filtral}, and so $\Sub{\one}^\op\cong\Omega$ by Proposition~\ref{p:preservation-fin-lim}. Since~$\one$ is an atom of $\K$, it follows that $\Omega$ is a two-element frame, and in particular a Boolean algebra. Thus, the law of excluded middle holds.
\end{remark}

The next result is an immediate consequence of Theorem~\ref{th:spatial-locales-covered} and Proposition~\ref{p:one-atom-comp-filtral}. Note that its assumptions entail the law of excluded middle (cf.\ Lemma~\ref{l:enough-points-lem} and Remark~\ref{rem:existence-K-em}).
\begin{corollary}\label{cor:main-boolean}
Let $\K$ be a non-trivial filtral pretopos with enough subobjects such that its terminal object is an atom and admits all set-indexed copowers. If compact Hausdorff locales have enough points, the functor $\F\colon\K\to\CHLoc$ is a weak equivalence.
\end{corollary}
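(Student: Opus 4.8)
The plan is to assemble the results already established, as the statement is essentially a matter of chaining them together. First I would observe that, since $\K$ is filtral, has enough subobjects, and its terminal object $\one$ is an atom, Proposition~\ref{p:one-atom-comp-filtral} immediately yields that $\K$ is \emph{compatibly} filtral. Consequently $\K$ meets all the hypotheses of Theorem~\ref{t:Sub-pretopos-morphism}, and so the functor $\F\colon\K\to\CHLoc$ is a pretopos embedding; in particular it is full and faithful. It therefore remains only to show that $\F$ is essentially surjective.

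To invoke Theorem~\ref{th:spatial-locales-covered}, which is tagged $\WEM$, I first need to confirm that the principle of weak excluded middle is available in the present setting. This is automatic, since the standing hypotheses force the full law of excluded middle: the assumption that compact Hausdorff locales have enough points implies excluded middle by Lemma~\ref{l:enough-points-lem}, and alternatively the mere existence of a non-trivial filtral category with enough subobjects whose terminal object is an atom already implies excluded middle, as recorded in Remark~\ref{rem:existence-K-em}. Either way excluded middle holds, and a fortiori so does weak excluded middle.

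With weak excluded middle in hand, and using that $\one$ admits all set-indexed copowers, Theorem~\ref{th:spatial-locales-covered} applies and tells us that the essential image of $\F$ contains every spatial compact Hausdorff locale. The final step is to upgrade ``spatial'' to ``all'': since we are assuming that compact Hausdorff locales have enough points, every object of $\CHLoc$ is spatial, whence the essential image of $\F$ is the whole of $\CHLoc$. Thus $\F$ is essentially surjective, and combining this with the full faithfulness obtained above shows that $\F$ is a weak equivalence.

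As for difficulty, there is in truth no substantial obstacle remaining: the corollary follows by correctly combining Proposition~\ref{p:one-atom-comp-filtral}, Theorem~\ref{t:Sub-pretopos-morphism}, and Theorem~\ref{th:spatial-locales-covered}. The only point requiring care is the logical bookkeeping around the $\WEM$ tag---one must verify that the hypotheses genuinely supply (weak) excluded middle before the $\WEM$-dependent theorem may be used---together with the simple but essential observation that ``enough points'' is precisely what converts the conclusion ``contains all spatial compact Hausdorff locales'' into ``contains all compact Hausdorff locales''.
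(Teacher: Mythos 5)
Your proof is correct and follows exactly the route the paper intends: the corollary is stated there as an immediate consequence of Proposition~\ref{p:one-atom-comp-filtral} and Theorem~\ref{th:spatial-locales-covered}, with the same remark that the hypotheses supply excluded middle via Lemma~\ref{l:enough-points-lem} and Remark~\ref{rem:existence-K-em}. Your write-up simply makes explicit the chaining that the paper leaves implicit.
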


Classically, assuming the existence of enough points for compact Hausdorff locales, the category $\CHLoc$ is equivalent to the category $\KHaus$ of compact Hausdorff spaces and continuous maps; cf.\ e.g.\ \cite[\S III.1.10]{jo}. Thus, the characterisation of $\KHaus$ (up to \emph{weak} equivalence) in Theorem~\ref{th:mr-main} can be deduced from Corollary~\ref{cor:main-boolean}. Just observe that every filtral pretopos satisfying the assumptions of Theorem~\ref{th:mr-main} has enough subobjects (since points are special types of subobjects) and its terminal object is an atom; for the latter statement, cf.\ \cite[Lemmas~3.2 and~5.7]{mr}.

\addtocontents{toc}{\protect\setcounter{tocdepth}{1}}
\subsection*{Acknowledgements} 
The authors are grateful to Graham Manuell and Christopher Townsend for further information on the constructive theory of locales and the constructive prime ideal theorem, respectively.
P.~Karazeris and  K.~Tsamis are indebted to Vassilis Aravantinos - Sotiropoulos for useful discussions. 
\addtocontents{toc}{\protect\setcounter{tocdepth}{2}}

\bibliographystyle{plain}

\end{document}